\newtheorem{theorem}{Theorem}
\newtheorem{lemma}{Lemma}
\newtheorem{proposition}{Proposition}
\newtheorem{corollary}{Corollary}
\newtheorem{remark}{Remark}
\numberwithin{equation}{section}
\numberwithin{theorem}{section}
\numberwithin{lemma}{section}
\numberwithin{proposition}{section}
\numberwithin{corollary}{section}
\numberwithin{remark}{section}
\begin{document}
\title{Spatial pointwise behavior of time-periodic Navier-Stokes flow 
induced by oscillation of a moving obstacle}
\author{Toshiaki Hishida\thanks{
Partially supported by the Grant-in-aid for Scientific Research 18K03363
from JSPS} \\
Graduate School of Mathematics \\
Nagoya University \\
Nagoya 464-8602, Japan \\
\texttt{hishida@math.nagoya-u.ac.jp} \\
}
\date{}
\maketitle
\begin{abstract}
We study the spatial decay of time-periodic Navier-Stokes flow 
at the rate $|x|^{-1}$ with/without wake structure in 3D exterior 
domains when a rigid body moves periodically in time. In this 
regime the existence of time-periodic solutions was established 
first in the 2006 paper by Galdi and Silvestre, 
however, with 
little information about spatial behavior at infinity 
so that uniqueness of solutions was not available.
This latter issue has been addressed by Galdi, who 
has recently succeeded in construction of a unique time-periodic solution 
with spatial behavior mentioned above if translational and 
angular velocities of the body fulfill, besides smallness and regularity, 
either of the following assumptions: (i) translation or rotation 
is absent; (ii) both velocities are parallel to the same constant 
vector. 
This paper shows the existence of a unique 
time-periodic Navier-Stokes flow in the small with values in the 
weak-$L^3$ space and then deduces the desired pointwise decay 
of the solution under some condition on the rigid motion of the 
body, that covers the cases (i), (ii) mentioned above.
\end{abstract}

\section{Introduction}
\label{intro}

Let $D$ be an exterior domain in $\mathbb R^3$ with smooth boundary
$\partial D\in C^{1,1}$.
An obstacle is identified with a compact set (with nonempty interior)
$\mathbb R^3\setminus D\subset B_1$, 
and it is assumed to be a 
rigid body moving in a viscous incompressible fluid filling the whole $\mathbb R^3$ with 
prescribed time-dependent rigid motion.
Then the fluid motion in a reference frame attached to the moving body is
described by the Navier-Stokes system (see \cite{G02})
\begin{equation}
\begin{split}
\partial_t {\mathcal U}+{\mathcal U}\cdot\nabla {\mathcal U}
&=\Delta {\mathcal U}+(\eta+\omega\times x)\cdot\nabla {\mathcal U}-\omega\times {\mathcal U}-\nabla p, \\
\mbox{div $\mathcal U$}&=0, \\
{\mathcal U}|_{\partial D}&=\eta+\omega\times x, \\
\lim_{|x|\to\infty}{\mathcal U}&= 0 
\end{split}
\label{NS}
\end{equation}
in $D\times \mathbb R$,
where ${\mathcal U}={\mathcal U}(x,t)\in\mathbb R^3$ and $p=p(x,t)\in\mathbb R$ respectively denote the velocity 
and pressure of the fluid, whereas $\eta=\eta(t)\in\mathbb R^3$ and $\omega=\omega(t)\in\mathbb R^3$ 
are translational and
angular velocities of the body in that frame.
Those velocities $\eta$ and $\omega$ are given and already involved in the equation of motion 
by the change of variables
as well as at the boundary $\partial D$ as the no-slip condition, where $\eta+\omega\times x$
is called the rigid motion. 
Our interest in the present paper is focused on the case when both $\eta$ and $\omega$
are periodic.
We then show that such oscillation of the body induces
time-periodic motion of the fluid with the same period as we would expect.
One can also put a periodic external force in the equation of motion, 
however, it is assumed to be absent for simplicity.

Analysis of periodic solutions covers 
the study of steady flows
with constant velocities $\eta,\,\omega$.
Since spatial decay properties at the rate $|x|^{-1}$ of at least small flows 
in the steady-state regime are well known \cite{BM95, Fi65, G03, GS07a, GS07b, NP}, 
such properties of periodic
flows should be of particular interest.
Let us recall briefly those properties of steady flows in 3D, where even asymptotic structure
is known and this interprets the optimality of the decay rate $|x|^{-1}$ 
unless the total net force, which we denote by $N$, exerted by the fluid to the obstacle
is identically zero.
In his celebrated 1933 paper, 
Leray \cite{Le}
showed the existence of steady flows having finite Dirichlet integral without any smallness
of data, however, with less information about asymptotic behavior at infinity so that uniqueness
of solutions is hopeless.
Later on,  
Finn \cite{Fi65} proved the existence of a 
unique steady flow, called physically reasonable solution,
with anisotropic pointwise decay structure when the translational velocity $\eta\neq 0$ is a small constant
and $\omega=0$.
The leading profile of his solution is the Oseen fundamental solution that exhibits the wake region
behind the body, and the coefficient of the profile is given by the net force $N$ mentioned above.
The reason why the leading profile comes from the linear part is better decay of the flow like $|x|^{-2}$ 
outside the wake region, while it decays at the rate $|x|^{-1}$ inside the wake.
When the translation is absent, that is not the case and the scale-critical rate $|x|^{-1}$ leads to
the balance between the linear part and the nonlinearity.
This suggests that the leading term of small solutions having no wake structure
is singled out from the set of steady self-similar Navier-Stokes flows,
which consists of all Landau solutions parametrized by a vector, which we call label, due to insight given by \v Sver\'ak \cite{Sv}.
In fact, the leading term is a Landau solution with label $N$ when the body is at rest (\cite{KSv}),
while it is another Landau solution with label $(N\cdot\frac{\omega}{|\omega|})\frac{\omega}{|\omega|}$
when the body is rotating with constant angular velocity $\omega\neq 0$
but $\eta=0$ (due to \cite{FH11} and then refined by \cite{FGK}).
See also Galdi \cite{G-b} and expository articles \cite{GN-hb, H-hb}
by Galdi, Neustupa and the present author for more details.
Similar asymptotic structure to what is mentioned above still holds even for time-periodic
Navier-Stokes flows, that we are going to discuss in what follows, when the body is at rest or translating
periodically along a constant direction,
see Galdi and Kyed \cite[Section 4.9]{GK-hb}.

There is an extensive literature on time-periodic Navier-Stokes flows, see 
a comprehensive survey \cite{GK-hb} 
and the references therein.
Here, the only articles to be cited are concerned especially with the exterior problem in 3D.
Let us begin to mention the case when the body is at rest, see \cite{MP, Y, GSo, KMT}. 
In his profound paper \cite{Y}, Yamazaki succeeded in construction of a unique periodic solution 
within the class $L^{3,\infty}$ 
by use of sharp temporal decay estimates of the Stokes semigroup, that is in fact a refinement of the
approach developed by Kozono and Nakao \cite{KN}, where 
$L^{3,\infty}$ denotes the Lorentz space
(weak-$L^3$ space).
In \cite{Y} the external force is assumed to be of divergence form $f=\mbox{div $F$}$ with
$F$ being small in $L^{3/2,\infty}$.
When the external force fulfills suitable pointwise decay properties as well as smallness, Galdi and Sohr \cite{GSo}
proved the existence of a unique periodic solution that enjoys a desired decay like $|x|^{-1}$
at infinity.
The same decay property was also deduced by Kang, Miura and Tsai \cite{KMT},
who further found, among others, the asymptotic structure of periodic solutions in which the leading term is still
a Landau solution with label being the time average of the net force $N=N(t)$.

When the body is moving in a time-periodic fashion, the pioneering work is the paper \cite{GS06} by
Galdi and Silvestre, who
proved the existence of weak and strong periodic solutions in $L^2$ by means of the
Galerkin approach together with Leray's invading domains technique,
where the directions
of $\eta(t)$ and $\omega(t)$ are completely general, however, uniqueness of solutions was not available
because of less information about spatial behavior of their solutions at infinity.
Successively, the present author \cite[Corollary 2.1]{Hi09} discussed the existence of a unique periodic motion
of the fluid performed by external forcing along the same way as in \cite{Y} 
combined with estimates from \cite{HShi09} in the restricted case
when $\omega$ is a constant vector
and $\eta =0$.
This result was revisited by \cite{Ng14} (by means of another method) and 
extended by \cite{GHN16} to the case 
when both $\eta$ and $\omega$ are
constant vectors and are parallel to each other.
The relevant case in which at least one of $\eta(t)$ and $\omega(t)$ is in fact periodic rather than constant 
was studied deeply 
by Galdi \cite{G20, G20a, G20b, G-new}
as well as by Eiter and Kyed \cite{EK}.
The latter authors developed the $L^q$-theory for strong solutions, while
the former author made it clear that a periodic solution which decays
at the rate $|x|^{-1}$ 
uniformly in $t$ with/without wake structure exists uniquely if $\eta$ and $\omega$
fulfill, besides smallness and suitable regularity, either of the following assumptions:
(i) $\eta=0$ or $\omega=0$;
(ii) $\eta(t)$ and $\omega(t)$ are parallel to the same constant vector.

The results due to Galdi mentioned above interest us 
and this article
is indeed inspired by his papers.
We are aiming at
the existence of a unique periodic solution to \eqref{NS}
in the small with values in $L^{3,\infty}$ for general $\eta$ and $\omega$ (as in \cite{GS06})
and then to deduce the desired pointwise decay rate $|x|^{-1}$
of the solution uniformly in $t$ under the additional conditions \eqref{wake-cond0}--\eqref{om-para}
below on the rigid motion of the body, that unify the cases (i), (ii) above
found by Galdi, see Remark \ref{compare-galdi}.
Let $\{\Phi(t,s)\}_{t,s\in\mathbb R}$ be the family of evolution matrices to the ordinary differential equation
$\frac{d\phi}{dt}=-\omega(t)\times\phi$.
Note that each $\Phi(t,s)$ is an orthogonal $3\times 3$ matrix 
and that $\Phi(t+l,s+l)=\Phi(t,s)$ for all $t,\,s\in\mathbb R$
as long as 
$\omega(t)$ is $l$-periodic with some $l>0$.
Our conditions now read
\begin{equation}
\sup_{-\infty<s<t<\infty}\left|\int_s^t\big\{\Phi(t,\tau)\eta(\tau)-\zeta\big\}\,d\tau\right|<\infty
\label{wake-cond0}
\end{equation}
with some constant vector $\zeta\in\mathbb R^3$ and 
\begin{equation}
\mbox{$\omega(t)$ being parallel to $\zeta$ for every $t\in\mathbb R$}.
\label{om-para}
\end{equation}
The latter condition is not needed when $\zeta=0$.
It then turns out
under a smallness assumption on
$\eta$ and $\omega$ that 
the periodic solution $u(t)$ obtained above decays like
\begin{equation*}
\sup_{t\in\mathbb R}|u(x,t)|=O(|x|^{-1})
\end{equation*}
as $|x|\to\infty$ and that the solution exhibits a wake structure in the direction $\zeta$,
see \eqref{pointwise}, if $\zeta\neq 0$.  
It should be emphasized that the similar condition to \eqref{wake-cond0} is already hidden
at the level of linear analysis in a sequence of papers \cite{G20, G20a, G20b, G-new} by Galdi.
The novelty of \eqref{wake-cond0} is that the evolution matrices $\Phi(t,s)$ are involved there to
take into account the interaction between the translation and rotation of the body.
Differently from analysis developed by Galdi, who establishes a complete linear theory
with the desired spatial behavior and then proceed to the Navier-Stokes system,
the conditions \eqref{wake-cond0}--\eqref{om-para} are not used until the final stage in which
the nonlinear flow with the desired behavior is reconstructed.

In \cite{Hi18, Hi20} the present author has developed temporal decay estimates
of the evolution operetor $T(t,s): u(s)=f\mapsto u(t)$, 
that provides a unique solution in $L^q$ to the initial value problem for the linearized system
\begin{equation}
\begin{split}
\partial_tu-\Delta u-(\eta+\omega\times x)\cdot\nabla u+\omega\times u+\nabla p&=0, \\
\mbox{div $u$}&=0, \\
u|_{\partial D}&=0, \\
\lim_{|x|\to\infty} u&=0, \\ 
u(\cdot,s)&=f
\end{split}
\label{linearized}
\end{equation}
in $D\times (s,\infty)$.
Estimates obtained there recover completely the corresponding results
for the autonomous case
(Stokes and Oseen semigroups with/without rotating effect)
\cite{I, MSol, KShi, HShi09, Shi08}.
Once we have those, it is obvious to accomplish the former purpose (existence of a unique solution
for general $\eta$ and $\omega$) mentioned in the preceding paragraph
as long as we just follow the approach
developed by Yamazaki \cite{Y}.
In fact, it will be described just in the first half of subsection 3.1 and, as in \cite{Y},
the solutions constructed there cover not only periodic solutions
but the ones which are bounded on the whole time axis $\mathbb R$ such as almost periodic solutions.
Moreover, it is even possible to show the asymptotic stability of small periodic Navier-Stokes flow
with respect to small initial disturbance being in $L^{3,\infty}$ along the similar manner to
\cite{HShi09, GH, TT} as well as \cite{Y} although this question is not addressed here.
Therefore, the main issue of the present paper must be the latter purpose, 
that is, spatial pointwise decay estimates of periodic solutions (and even solutions
that are bounded on the whole time axis).

The strategy is to employ a cut-off technique to 
reduce our study to the whole space problem and then to reconstruct a solution with desired decay property.
Finally, we identify a given 
periodic solution with the solution reconstructed above near infinity
by uniqueness of solutions.
Although this procedure itself is more or less standard, difficulties stem not only from the non-autonomous character
but also from the fact that local regularity of periodic solutions 
constructed by the method of \cite{Y} is very little,
so that the argument must be delicate.
To overcome those diffculties, first of all,
we make efforts to deduce further (however, still modest) regularity
along with weak form involving the associated pressure, 
see \eqref{weak-ext}, with the aid of
regularity theory of the evolution operator
developed in \cite[Section 5]{Hi20}.
At the level of this weak form, we carry out a cut-off procedure to get a similar weak form for
the whole space problem.
Let $U(t,s): u(s)=f\mapsto u(t)$ be the evolution operator, which is the solution operator
to the initial value problem for the linearized system \eqref{linearized} in the whole space
$\mathbb R^3\times (s,\infty)$, where the boundary condition at $\partial D$ is of course removed.
With the weak form above for the whole space problem at hand,
it is necessary to take 
a solution $U(t,s)^*\psi$
to the backward problem for
the adjoint system with final data $\psi$ 
as a test function to obtain an integral equation with use of  
$U(t,s)$, where $U(t,s)^*$ denotes the adjoint evolution operator.
To justify this procedure, a density property plays an important role, see Lemma \ref{density}.
In view of the structure of explicit representation formula of $U(t,s)P_{\mathbb R^3}$ 
with $P_{\mathbb R^3}$ being the Fujita-Kato projection in the whole space,
we are led to the conditions \eqref{wake-cond0}--\eqref{om-para}  
to reconstruct a solution with the desired spatial decay property. 

The paper is organized as follows.
After some preliminaries including some knowledge of the evolution operator,
we present the main theorems in section 2.
The existence, uniqueness and regularity of solutions (Theorem \ref{main1}) together with weak form
are discussed in section 3.
The final section is devoted to deduction of pointwise behavior of the solution (Theorem \ref{main3}).

\section{Results}
\label{result}

\noindent
{\bf 2.1. Notation}
\medskip

We start with introducing notation.
Let us fix the exterior domain $D\subset\mathbb R^3$
with boundary $\partial D\in C^{1,1}$.
By $B_\rho=B_\rho(0)$ we denote the open ball with radius $\rho>0$ centered at
the origin, and assume that 
$\mathbb R^3\setminus D\subset B_1$.
Set $D_R=D\cap B_R$ for $R\geq 1$.
Let $\Omega$ be one of the exterior domain $D$ fixed above, the whole space
$\mathbb R^3$ and a bounded domain ($D_R,\,B_R$,...).
The class $C_0^\infty(\Omega)$ consists of all smooth functions that are
compactly supported in $\Omega$.
For $1\leq q\leq \infty$ and integer $m\geq 0$,
$L^q(\Omega)$ and $W^{m,q}(\Omega)$ are the standard
Lebesgue and $L^q$-Sobolev spaces.
The norm of $L^q(\Omega)$ is denoted by
$\|\cdot\|_{q,\Omega}$ and it is 
abbreviated to $\|\cdot\|_q=\|\cdot\|_{q,D}$ for the exterior domain $\Omega=D$ under consideration.
Given $q\in [1,\infty]$, we denote the H\"older conjugate exponent by
$q^\prime=q/(q-1)\in [1,\infty]$.
The space $W^{m,q}_0(\Omega)$ stands for the completion of 
$C_0^\infty(\Omega)$ in $W^{m,q}(\Omega)$, and $W^{-1,q}(\Omega)$
denotes the dual space of $W^{1,q^\prime}_0(\Omega)$,
where $1<q<\infty$. 
The Lorentz space 
$L^{q,r}(\Omega)$ is defined by use of the average function of rearrangement,
see Bergh and L\"ofstr\"om \cite{BL}.
We just mention a characterization of the Lorentz space in terms of real interpolation:
\[
L^{q,r}(\Omega)=\big(L^{q_0}(\Omega), L^{q_1}(\Omega)\big)_{\theta,r}
\]
where $(\cdot,\cdot)_{\theta,r}$ stands for the real interpolation
functor and
\begin{equation}
1<q_0<q<q_1<\infty, \qquad
\frac{1}{q}=\frac{1-\theta}{q_0}+\frac{\theta}{q_1}, \qquad
1\leq r\leq \infty.
\label{interpo}
\end{equation}
Note that the space above generated by the interpolation
is independent of choice of $q_0$ and $q_1$ and that the duality relation
$L^{q,r}(\Omega)^*=L^{q^\prime,r^\prime}(\Omega)$
holds unless $r=\infty$.
The Lorentz space $L^{q,r}(\Omega)$ with \eqref{interpo}
is a Banach space whose norm is denoted by $\|\cdot\|_{q,r,\Omega}$.
It is abbreviated to 
$\|\cdot\|_{q,r}=\|\cdot\|_{q,r,D}$ 
for the exterior domain $\Omega=D$. 
We denote by $\langle\cdot,\cdot\rangle$ various duality pairings over several domains,
which are understood in each context.

Let $X$ be a reflexive Banach space 
and $\langle\cdot,\cdot\rangle$ a duality pairing
between $X$ and $X^*$.
Given interval $I\subset\mathbb R$, we set
$C_{w}(I;\,X)=\{u: I\to X;\; \langle u(\cdot),\psi\rangle\in C(I)\;\forall\psi\in X^*\}$.
The space
$C^1_{w}(I;\,X)$ consists of all
$u\in C_{w}(I;\,X)$ such that 
$\langle u(\cdot),\psi\rangle\in C^1(I)$ for all $\psi\in X^*$
with the following property:
For every $t\in I$ there is a constant $c(t)>0$ satisfying
$\left|\frac{d}{dt}\langle u(t),\psi\rangle\right|\leq c(t)\|\psi\|_{X^*}$.
We then find a function $w\in C_{w}(I;\,X)$ such that
$\frac{d}{dt}\langle u(t),\psi\rangle=\langle w(t),\psi\rangle$
for all $t\in I$ and $\psi\in X^*$.
We write $w(t)=\partial_tu(t)$ in $X$.
If $X$ is not reflexive, then the space $C_w(I;\, X)$
should be replaced by $C_{w^*}(I;\, X)$,
where $\langle\cdot,\cdot\rangle$ denotes the pairing between
$X$ and its pre-dual.
Set $BC(I;\,X)=L^\infty(I;\,X)\cap C(I;\,X)$.
It is then clear to realize the definition of 
the other function spaces,
which will appear in Theorem \ref{main1} and so on.

We use the same symbol for denoting scalar, vector and tensor function spaces
if there is no confusion.
Several constants are denoted by $C$, which may change from line to line.
\medskip

\noindent
{\bf 2.2. Evolution operator}
\medskip

Let $\Omega$ be either the exterior domain $D$ under consideration or
the whole space $\mathbb R^3$.
The class $C_{0,\sigma}^\infty(\Omega)$ consists of all solenoidal 
vector fields being in $C_0^\infty(\Omega)$.
For $1<q<\infty$ we denote by $L^q_\sigma(\Omega)$ the completion of
$C_{0,\sigma}^\infty(\Omega)$ in $L^q(\Omega)$.
When $\Omega=D$, it is characterized as
\[
L^q_\sigma(D)=\{u\in L^q(D);\,\mbox{div $u$}=0,\,\nu\cdot u|_{\partial D}=0\},
\]
where $\nu$ stands for outer unit normal to the boundary $\partial D$
and $\nu\cdot u|_{\partial D}$ is understood in the sense of normal trace.
We also have the Helmholtz decomposition
\[
L^q(\Omega)=L^q_\sigma(\Omega)\oplus
\{\nabla p\in L^q(\Omega);\; p\in L^q_{\rm loc}(\overline{\Omega})\}
\]
see Miyakawa \cite{Miy82}, Simader and Sohr \cite{SiSo} for $\Omega=D$.
By $P_\Omega$ we denote the projection, called the Fujita-Kato projection,
associated with the decomposition above.
We abbreviate $P=P_D$.
When $\Omega =\mathbb R^3$, the Fujita-Kato projection is described as
$P_{\mathbb R^3}=I+{\mathcal R}\otimes {\mathcal R}$ with $I$ being the identity map and 
${\mathcal R}=\nabla (-\Delta)^{-1/2}$
being the Riesz transform.

Let $1\leq r\leq \infty$.
Then, by real interpolation, $P_\Omega$ extends a bounded operator acting
on the Lorentz space $L^{q,r}(\Omega)$.
Following Borchers and Miyakawa \cite[Section 5]{BM95}, we define 
the solenoidal Lorentz space by 
$L^{q,r}_\sigma(\Omega):=P_\Omega\big(L^{q,r}(\Omega)\big)$, that is, the range
of such an extension of the Fujita-Kato projection $P_\Omega$.
Note that the duality relation 
$L^{q,r}_\sigma(\Omega)^*=L^{q^\prime,r^\prime}_\sigma(\Omega)$
still holds for $1\leq r<\infty$
(\cite[Theorem 5.2]{BM95}),
that $C_{0,\sigma}^\infty(\Omega)$ is dense in $L^{q,r}_\sigma(\Omega)$
unless $r=\infty$ (\cite[Theorem 5.5]{BM95}), and that
\[
L^{q,r}_\sigma(\Omega)
=\big(L^{q_0}_\sigma(\Omega), L^{q_1}_\sigma(\Omega)\big)_{\theta,r}
\]
with the same exponents 
as in \eqref{interpo} 
(\cite[Theorem 5.4]{BM95}).

Let us introduce the linearized operator 
associated with \eqref{linearized} in exterior domains.
Let $1<q<\infty$. The Stokes operator $A$ on the space
$L^q_\sigma(D)$ is defined by
\begin{equation}
D_q(A)=L^q_\sigma(D)\cap W^{1,q}_0(D)\cap W^{2,q}(D), \qquad
Au=-P\Delta u.  
\label{stokes}
\end{equation}
It is well known 
that fractional powers $A^\alpha$ are well-defined as closed operators on $L^q_\sigma(D)$.
We refer to \cite{BM90} for detailed studies of them.
The only case used in this paper is $\alpha=1/2$ (square root),
see \eqref{stokes-evo} in the next section.

Suppose
\begin{equation}
\eta,\;\omega\in W^{1,\infty}(\mathbb R;\,\mathbb R^3)
\cap C^1(\mathbb R;\,\mathbb R^3)
\label{eta-om0}
\end{equation}
throughout the paper.
The magnitude of those velocities of the rigid body is denoted by
$\|(\eta,\omega)\|_{W^{1,\infty}}$.
Let $1<q<\infty$.
For each $t\in\mathbb R$, we define
the operator $L_\pm(t)$ 
by
\begin{equation}
\begin{split}
&D_q(L_\pm(t))=\{u\in D_q(A);\; 
(\omega(t)\times x)\cdot\nabla u\in L^q(D)\}, \\
&L_\pm(t)u=-P[\Delta u\pm (\eta(t)+\omega(t)\times x)\cdot\nabla u\mp\omega(t)\times u].
\end{split}
\label{L}
\end{equation}
Then the initial value problem for the
linearized system \eqref{linearized} is written as
\begin{equation}
\frac{du}{dt}+L_+(t)u=0, \quad t\in (s,\infty); \qquad
u(s)=f
\label{linear0}
\end{equation}
in $L^q_\sigma(D)$.
It follows from \cite[(2.12)]{Hi18} that
\begin{equation}
\langle L_\pm(t)u, v\rangle
=\langle u, L_\mp(t)v\rangle
\label{L-adj}
\end{equation}
for all $u\in D_q(L_\pm(t))$ and $v\in D_{q^\prime}(L_\mp(t))$.
Hansel and Rhandi \cite{HR} proved that the operator family
$\{L_+(t);\,t\in\mathbb R\}$ generates an evolution operator
$\{T(t,s);\,-\infty<s\leq t<\infty\}$ on $L^q_\sigma(D)$.
Although they discussed the case 
$t\geq s\geq 0$,
the argument works for $-\infty<s\leq t<\infty$ without any 
change.
See \cite[Proposition 2.1, Section 5]{Hi20} on several regularity properties of the evolution
operator $T(t,s)$, 
which are further developments of studies in \cite{HR}.
The adjoint evolution operator $T(t,s)^*$ provides a solution
to the backward problem for the adjoint system subject to the final condition at $t\in \mathbb R$, that is,
\begin{equation}
-\frac{dv}{ds}+L_-(s)v=0, \quad s\in (-\infty,t); \qquad
v(t)=g
\label{back}
\end{equation}
in $L^q_\sigma(D)$ as long as the final data $g$ are fine, see \cite[Section 2]{Hi20}.

In \cite[Theorem 2.1]{Hi18} and \cite[Theorem 2.1, Theorem 2.2]{Hi20},
under weaker assumptions on the rigid motion than \eqref{eta-om0},
the present author established the following 
decay estimates of the evolution operators $T(t,s)$ and $T(t,s)^*$
for all $t,\,s \in\mathbb R$
with $t>s$ (all $t\in\mathbb R$ concerning \eqref{evo4} below),
$f\in L^q_\sigma(D)$, $g\in L^{q,1}_\sigma(D)$ and 
$F\in L^q(D)^{3\times 3}$: 
\begin{equation}
\|T(t,s)f\|_r\leq C(t-s)^{-(3/q-3/r)/2}\|f\|_q, \qquad
1<q\leq r\leq \infty,\; q\neq \infty
\label{evo1}
\end{equation}
\begin{equation}
\|\nabla T(t,s)f\|_r\leq C(t-s)^{-1/2-(3/q-3/r)/2}\|f\|_q, \qquad
1<q\leq r\leq 3
\label{evo2}
\end{equation}
\begin{equation}
\|\nabla T(t,s)^*g\|_{r,1}
\leq C(t-s)^{-1/2-(3/q-3/r)/2}\|g\|_{q,1}, \qquad
1<q\leq r\leq 3
\label{evo3}
\end{equation}
\begin{equation}
\int_{-\infty}^t\|\nabla T(t,s)^*g\|_{r,1}\,ds
\leq C\|g\|_{q,1}, \qquad
1<q< r\leq 3,\;\frac{1}{q}-\frac{1}{r}=\frac{1}{3}
\label{evo4}
\end{equation}
\begin{equation} 
\|T(t,s)P\,\mbox{div $F$}\|_r\leq C(t-s)^{-1/2-(3/q-3/r)/2}\|F\|_q,
\quad  \frac{3}{2}\leq q\leq r\leq\infty,\; q\neq\infty.
\label{evo5}
\end{equation}
This together with Remark \ref{rem-evo} (i) below
is a set of estimates for later
use although all of them hold for both $T(t,s)$ and $T(t,s)^*$.
Let us summarize some comments on those estimates in the following remark.
\begin{remark}
(i)
One needs the restriction $1<q\leq r\leq 3$ for decay rate \eqref{evo2} as $(t-s)\to\infty$
\cite[Remark 2.1]{Hi20}, while
smoothing rate in \eqref{evo2} as $(t-s)\to 0$ holds true even 
for $1<q\leq r<\infty$;
to be precise, \eqref{evo2} with $C=C(\tau_*)$ for $0\leq s< t\leq \tau_*$
was proved in \cite{HR} and then it was extended by the present author \cite[Proposition 3.1]{Hi18}
to \eqref{evo2} with $C=C(\tau_*)$ for $t-s\leq \tau_*$, where $\tau_*\in (0,\infty)$ is arbitrary.
When $1<q\leq r\in (3,\infty)$, less decay rate $(t-s)^{-3/2q}$ of $\nabla T(t,s)$
for $t-s>1$ is deduced \cite[Theorem 2.1]{Hi20} instead of \eqref{evo2},
but it is not needed in this paper.
The similar comments are valid also for \eqref{evo3} and \eqref{evo5}.

(ii)
As in \cite{HR}, the case $t> s\geq 0$ was studied in \cite{Hi18, Hi20},
but all the arguments work for $-\infty<s<t<\infty$ as well.
Accordingly, estimate given in \cite[Theorem 2.2, (2.26)]{Hi20} for the integral $\int_0^t$ 
can be replaced by
\eqref{evo4}.
The idea of deducing \eqref{evo4} from \eqref{evo3} by use of
real interpolation is due to \cite{Y}.

(iii)
Estimate \eqref{evo5} follows from duality of \eqref{evo2} with
$T(t,s)$ replaced by $T(t,s)^*$ and the semigroup property, see \cite[Proposition 3.3]{GH}.
This tells us that the composite operator $T(t,s)P\,\mbox{\rm div}$
extends a bounded operator from $L^{q}(D)^{3\times 3}$
to $L^r_\sigma(D)$, $r\in (q,\infty)$, and to $L^\infty(D)$.

(iv)
Under the assumption \eqref{eta-om0}, all the constants $C>0$ in \eqref{evo1}--\eqref{evo5}
depend only on 
$q,\, r,\, D$ and $m>0$, where
$\|(\eta,\omega)\|_{W^{1,\infty}}\leq m$ which can be large;
that is, those estimates hold uniformly in such $\eta,\,\omega$.
In what follows, we assume $\|(\eta,\omega)\|_{W^{1,\infty}}\leq 1$ for simplicity to fix the constants
in \eqref{evo1}--\eqref{evo5}.
\label{rem-evo}
\end{remark}

It is readily seen that
\begin{equation}
T(t+l,s+l)=T(t,s) 
\label{per-evo-op} 
\end{equation}
for all $t,\,s\in\mathbb R$ with $t>s$ 
provided that $\eta$ and $\omega$
are $l$-periodic with some period $l>0$.
This is verified from the fact that
$v(t)=T(t+l,s+l)f-T(t,s)f$ 
enjoys
\[
\frac{dv}{dt}+L_+(t)v=0, \quad t\in (s,\infty); \qquad v(s)=0
\]
for all $f\in C_{0,\sigma}^\infty(D)$ that is dense in $L^q_\sigma(D)$.
\medskip

\noindent
{\bf 2.3. Main results}
\medskip

Having $\mathbb R^3\setminus D\subset B_1$ in mind,
we fix a cut-off function $\varphi$ such that
\begin{equation}
\varphi\in C_0^\infty(B_2), \qquad
\varphi(x)=1 \quad(x\in B_1).
\label{cut}
\end{equation}
Set
\begin{equation}
b(x,t)=\frac{1}{2}\,\mbox{rot}\left[\varphi(x)
\big(\eta(t)\times x-|x|^2\omega(t)\big)\right],
\label{lift}
\end{equation}
that is a lift of the rigid motion at the boundary $\partial D$
and satisfies
\begin{equation}
\begin{split}
&\mbox{div $b$}=0, \qquad
b|_{\partial D}=\eta+\omega\times x, \qquad
b(t)\in C_0^\infty(B_2) \\
&b\in BC(\mathbb R;\, W^{2,q}(\mathbb R^3)), \qquad
\partial_tb\in BC(\mathbb R;\, L^q(\mathbb R^3)) \\
&\sup_{t\in\mathbb R}\Big(\|b(t)\|_{W^{2,q}(\mathbb R^3)}
+\|\partial_tb(t)\|_{q,\mathbb R^3}\Big)
\leq C\|(\eta,\omega)\|_{W^{1,\infty}} 
\end{split}
\label{lift-est}
\end{equation}
for every $q\in [1,\infty]$ with some constant $C=C(q)>0$.
Let us look for a solution to \eqref{NS} of the form
\[
{\mathcal U}(x,t)=b(x,t)+u(x,t),
\]
then $u(t)$ obeys
\begin{equation}
\frac{du}{dt}+L_+(t)u+P(u\cdot\nabla u+b\cdot\nabla u+u\cdot\nabla b)=Pf\qquad (t\in \mathbb R)
\label{evo-eq}
\end{equation}
in $L^q_\sigma(D)$, where
\begin{equation}
\begin{split}
f&=\mbox{div $F$} 
=\Delta b+(\eta+\omega\times x)\cdot\nabla b-\omega\times b  
-\partial_tb-b\cdot\nabla b, \\  
F&=\nabla b+b\otimes (\eta+\omega\times x) 
-(\omega\times x)\otimes b-F_0-b\otimes b   
\end{split}
\label{force0}
\end{equation}
with
$F_0(t)=-\nabla (4\pi|x|)^{-1}*\partial_tb(t)\in L^{r,\infty}(\mathbb R^3)$,
$3/2\leq r\leq\infty$ ($L^{\infty,\infty}=L^\infty$).
We then see from \eqref{lift-est} that
\begin{equation} 
\begin{split}  
&f(t)\in C_0^\infty(B_2), \quad 
f\in BC(\mathbb R;\, L^q(\mathbb R^3)), \quad 
F\in BC(\mathbb R;\,L^{r,\infty}(\mathbb R^3))  \\ 
&\|f(t)\|_{q,\mathbb R^3}\leq C\|(\eta,\omega)\|_{W^{1,\infty}}, \qquad
\|F(t)\|_{r,\infty,\mathbb R^3}
\leq 
C^\prime\|(\eta,\omega)\|_{W^{1,\infty}}
\end{split} 
\label{f-F} 
\end{equation}
for every $q\in [1,\infty]$ and $r\in [3/2,\infty]$ with some
constants $C=C(q),\, C^\prime=C^\prime(r)>0$,
where we have assumed 
$\|(\eta,\omega)\|_{W^{1,\infty}}\leq 1$ 
at the beginning 
as already mentioned in Remark \ref{rem-evo} (iv).
Indeed, we will use both forms of the external force, $f$ and $\mbox{div $F$}$.

In view of \eqref{evo-eq}, 
we use fundamental properties of the evolution operator to consider formally
$\partial_s\{T(t,s)u(s)\}$ in $L^q_\sigma(D)$ and then integrate it over the interval $(\tau,t)$.
If one uses \eqref{evo1} together with
$\limsup_{\tau\to -\infty}\|u(\tau)\|_{q_0}<\infty$
for some $q_0\in (1,q)$, see the latter half of the proof of Proposition \ref{q-bdd}, 
the equation that $u(t)$ fulfills is reduced to 
\begin{equation}
u(t)
=\int_{-\infty}^tT(t,s)Pf(s)\, ds -\int_{-\infty}^t
T(t,s)P\,\mbox{div}(u\otimes u+u\otimes b+b\otimes u)(s)\, ds.
\label{NS-int}
\end{equation}

Before stating our main results, we should discuss the condition which ensures that the second integral of
the right-hand side of \eqref{NS-int} makes sense as the Bochner integral.
Note that the
condition $r_0>3$ in \eqref{ensure-bochner} below is reasonable because we have the class of steady flows
in mind (if translation of the body is absent).
\begin{lemma}
Suppose that there are $r_0\in (3,6)$ and $r_1\in (r_0,\infty)$ 
such that
\begin{equation}
u\in L^\infty(\mathbb R;\,L^{r_0}_\sigma(D)\cap L^{r_1}_\sigma(D)).
\label{ensure-bochner}
\end{equation}
Then 
the second integral of the right-hand side of \eqref{NS-int} is Bochner summable in $L^q(D)$ for every 
$q\leq\infty$ satisfying
$2/r_1-1/3<1/q<2/r_0-1/3$.
\label{b-s}
\end{lemma}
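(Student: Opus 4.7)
The plan is to combine estimate \eqref{evo5} for the composite $T(t,s)P\,\mbox{div}$ with elementary H\"older bounds on the source, and to split the time integral at $s=t-1$ so that two different choices of source exponent handle the singularities at $s\to-\infty$ and at $s\to t$ respectively. First I would observe that by \eqref{lift-est} the lift $b$ is bounded in every Lebesgue space uniformly in $t$, so that by H\"older and real interpolation of \eqref{ensure-bochner} between $L^{r_0}$ and $L^{r_1}$, the nonlinearity
\[
G(t):=u\otimes u+u\otimes b+b\otimes u
\]
satisfies $\sup_{t\in\mathbb R}\|G(t)\|_p<\infty$ for every $p\in[r_0/2,\,r_1/2]$; the decisive contribution is the quadratic term $u\otimes u$, while $u\otimes b$ and $b\otimes u$ are in all smaller $L^p$ by the boundedness of $b$.

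For any admissible source exponent $p\in[r_0/2,\,r_1/2]$ with $3/2\le p\le q$, estimate \eqref{evo5} yields
\[
\|T(t,s)P\,\mbox{div}\,G(s)\|_q\le C(t-s)^{-\alpha(p)}\|G(s)\|_p,\qquad \alpha(p):=\tfrac{1}{2}+\tfrac{3}{2}\Bigl(\tfrac{1}{p}-\tfrac{1}{q}\Bigr).
\]
For the tail $s\in(-\infty,t-1]$ I would take $p=r_0/2$: then $\alpha(r_0/2)>1$ is equivalent to $1/q<2/r_0-1/3$, which is precisely the upper hypothesis on $q$; the admissibility constraints $p>3/2$ and $p\le q$ follow from $r_0>3$ and from $1/q<2/r_0-1/3<2/r_0$, respectively. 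This renders the tail integral convergent, with norm dominated by $\|G\|_{L^\infty_tL^{r_0/2}}$.

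For the short piece $s\in(t-1,t)$ I would choose $p=\min(q,\,r_1/2)$. When $q\ge r_1/2$ the value $p=r_1/2$ gives $\alpha<1$ precisely under $1/q>2/r_1-1/3$, i.e.\ the lower hypothesis on $q$; when $q<r_1/2$ the value $p=q$ gives $\alpha=1/2<1$ trivially, with $p>3/2$ ensured by $q>r_0/2>3/2$. On each of the two pieces the factor $(t-s)^{-\alpha}$ is integrable, so that $\int_{-\infty}^t\|T(t,s)P\,\mbox{div}\,G(s)\|_q\,ds$ is finite; together with the strong measurability of the integrand (inherited from the evolution operator on the separable spaces $L^{p}_\sigma(D)$), this yields Bochner summability in $L^q(D)$. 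The only mildly subtle point is recognizing that no single choice of source exponent $p$ can produce integrability at both endpoints simultaneously, which is what forces the two-exponent splitting and pins down the admissible range of target exponents $q$ stated in the lemma.
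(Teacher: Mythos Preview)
Your proof is correct and follows essentially the same approach as the paper: split $\int_{-\infty}^t=\int_{-\infty}^{t-1}+\int_{t-1}^t$, apply \eqref{evo5} with source exponent $r_0/2$ on the tail, and a larger source exponent on the short piece. The only cosmetic difference is that the paper parametrizes the second exponent by choosing an auxiliary $r_2\in(r_0,r_1]$ with $2/r_2-1/3<1/q\le 2/r_2$ and uses $p=r_2/2$, whereas you take $p=\min(q,r_1/2)$; these yield the same integrability.
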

\begin{proof}
It suffices to discuss the term $u\otimes u$ because $b$ is quite fine
as in \eqref{lift-est}.
Given $q$ satisfying the condition, one can take $r_2\in (r_0,r_1]$ such that
$2/r_2-1/3<1/q\leq 2/r_2$.
Using $(u \otimes u)(t)\in L^{r_0/2}(D)\cap L^{r_2/2}(D)$ with $r_2/2>r_0/2>3/2$,
we can apply  
\eqref{evo5} for the composite operator $T(t,s)P\,\mbox{div}$
to see the conclusion 
by splitting the integral as
$\int_{-\infty}^{t-1}+\int_{t-1}^t$.
\end{proof}

If $u\in BC(\mathbb R;\,L^q_\sigma(D))$ satisfies
\eqref{NS-int} in $L^q_\sigma(D)$ for some $q\in (3,\infty)$ in which the integral 
makes sense as the Bochner integral,
then $u(t)$ 
is said to be a solution to \eqref{NS-int}.
The following theorem provides a unique solution to \eqref{NS-int},
where $\eta$ and $\omega$ are not necessarily periodic.
When they are periodic, the solution in the following theorem is in fact periodic with the same period,
see Corollary \ref{per-sol}. 
\begin{theorem}
There exists a constant $\delta\in (0,1]$  
such that the following statement holds true:
If $\eta$ and $\omega$ satisfy \eqref{eta-om0} with
$\|(\eta,\omega)\|_{W^{1,\infty}}\leq\delta$, then problem
\eqref{NS-int} admits a unique solution $u(t)$ that enjoys
\begin{equation}
u\in BC_{w^*}(\mathbb R;\,L^{3,\infty}_\sigma(D)), \qquad
\sup_{t\in\mathbb R}\|u(t)\|_{3,\infty}\leq C\|(\eta,\omega)\|_{W^{1,\infty}}
\label{sol-cl}
\end{equation}
with some constant $C=C(D)>0$ as well as
\begin{equation}
u\in BC(\mathbb R;\,L^q_\sigma(D)) 
\cap BC_w(\mathbb R;\, W^{1,q}_0(D))  
\label{reg1}
\end{equation}
\begin{equation}
\sup_{t\in\mathbb R}\big(\|u(t)\|_q 
+\|\nabla u(t)\|_q\big)
\leq C\|(\eta,\omega)\|_{W^{1,\infty}}
\label{reg3}
\end{equation}
for all $q\in (3,\infty)$ with some constant 
$C=C(q,D)>0$.  
Given arbitrary $R>2$, we have
\begin{equation}
u\in C^1_{w}(\mathbb R;\, W^{-1,r}(D_R)), \qquad 
p\in BC_w(\mathbb R;\, L^r(D_R)) 
\label{reg2}
\end{equation}
\begin{equation} 
\sup_{t\in\mathbb R}
\big(\|\partial_tu(t)\|_{W^{-1,r}(D_R)}+\|p(t)\|_{r,D_R}\big) 
\leq C\|(\eta,\omega)\|_{W^{1,\infty}} 
\label{reg4} 
\end{equation}
for all $r\in (1,\infty)$ with some constant $C=C(r,R,D)>0$, 
where the pressure $p(t)$ associated with $u(t)$ is singled out in such a way that
$\int_{D_R}p(t)\,dx=0$.
\label{main1}
\end{theorem}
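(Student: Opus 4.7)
The plan is to follow Yamazaki's scheme \cite{Y}, treating \eqref{NS-int} as a fixed-point equation in the scale-critical space $L^{3,\infty}_\sigma(D)$ and then bootstrapping to the subcritical regularity in \eqref{reg1}--\eqref{reg3}. Write $u = \mathcal{L}_F(t) - \mathcal{N}(u,u)(t) - \mathcal{B}u(t)$ with
\[
\mathcal{L}_F(t) = \int_{-\infty}^t T(t,s) P\,\mathrm{div}\, F(s)\,ds, \qquad \mathcal{N}(u,v)(t) = \int_{-\infty}^t T(t,s) P\,\mathrm{div}(u\otimes v)(s)\,ds,
\]
and $\mathcal{B}u = \mathcal{N}(u,b) + \mathcal{N}(b,u)$. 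The linchpin is a duality estimate for the critical nonlinearity: pairing $\mathcal{N}(u,u)(t)$ with a test solenoidal field $\psi\in L^{3/2,1}_\sigma(D)$, integrating by parts onto $T(t,s)^*\psi$, and applying Hölder in Lorentz spaces together with \eqref{evo4} (taking $q=3/2,\,r=3$) gives
\[
|\langle \mathcal{N}(u,u)(t), \psi\rangle| \le C\|u\|_{L^\infty_t L^{3,\infty}}^2 \int_{-\infty}^t \|\nabla T(t,s)^*\psi\|_{3,1}\,ds \le C\|u\|_{L^\infty_t L^{3,\infty}}^2 \|\psi\|_{3/2,1}.
\]
Combined with the duality $(L^{3/2,1}_\sigma)^* = L^{3,\infty}_\sigma$ and the density of $C_{0,\sigma}^\infty$ in $L^{3/2,1}_\sigma(D)$, this yields $\|\mathcal{N}(u,u)\|_{L^\infty_t L^{3,\infty}} \le C\|u\|_{L^\infty_t L^{3,\infty}}^2$. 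The same argument, together with $\|F\|_{L^\infty_t L^{3/2,\infty}(\mathbb{R}^3)} \le C\delta$ from \eqref{f-F} and the Hölder bound $\|u\otimes b\|_{3/2,\infty}\le C\delta\|u\|_{3,\infty}$, produces $\|\mathcal{L}_F\|_{L^\infty_t L^{3,\infty}} + \|\mathcal{B}u\|_{L^\infty_t L^{3,\infty}} \le C\delta(1+\|u\|_{L^\infty_t L^{3,\infty}})$. A standard contraction in a closed ball of radius $O(\delta)$ then produces a unique solution satisfying \eqref{sol-cl}, whose weak-$*$ continuity in time follows from dominated convergence in the pairing with a fixed $\psi\in L^{3/2,1}_\sigma(D)$.

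For the subcritical estimates \eqref{reg1} and \eqref{reg3}, fix $q\in(3,\infty)$ and split the time integral as $\int_{-\infty}^{t-1} + \int_{t-1}^{t}$. On the far piece, the Lorentz-interpolated version of \eqref{evo5} applied to $u\otimes u\in L^{3/2,\infty}$ delivers an integrable tail because the decay exponent $-1/2-(2-3/q)/2$ is strictly less than $-1$ for $q>3$. On the near piece, pick an intermediate $r<q$ with $1/r-1/q<1/3$, embed $L^{3,\infty}\cap L^q$ into $L^{2r}$ on that scale, and use the smoothing of \eqref{evo5} which remains integrable. Gradient bounds come from \eqref{evo2} and its Lorentz companion by the same splitting. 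Once $u\in BC(\mathbb R;\,L^q_\sigma(D))$ is secured, upgrading from weak-$*$ to norm continuity in $L^q$ and weak continuity in $W^{1,q}_0(D)$ is another application of dominated convergence in the integral representation.

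For the pressure and time-derivative bounds \eqref{reg4}, fix $R>2$. The identities \eqref{reg1}, \eqref{reg3} and \eqref{lift-est} place every term of the momentum balance apart from $\partial_t u$ and $\nabla p$ in $L^r(D_R)$ or $W^{-1,r}(D_R)$. Using the regularity theory of the evolution operator from \cite[Section 5]{Hi20} to justify testing \eqref{NS-int} against $\psi\in C_{0,\sigma}^\infty(D_R)$ produces a weak form of the Navier-Stokes momentum equation inside $D_R$; de Rham's theorem together with a Bogovskii correction normalizing $\int_{D_R} p\,dx = 0$ then extracts $p(t)\in L^r(D_R)$ with the desired bound, and rewriting the equation in $W^{-1,r}(D_R)$ supplies the estimate for $\partial_t u$.

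The main obstacle is the extremely weak a priori regularity of the critical fixed point. Since $u$ is first obtained only in $L^{3,\infty}$ by a dualized argument, justifying both the Bochner-summability hypothesis of Lemma~\ref{b-s} for the bootstrap and the weak form in $D_R$ used to recover $p$ requires careful handling of the density theory of $L^{3,\infty}_\sigma(D)$ and a nontrivial appeal to the non-autonomous regularity statements of \cite[Section 5]{Hi20}; once these technical points are resolved, the remainder follows the well-established pattern of Yamazaki's approach adapted to the present non-autonomous linearization $T(t,s)$.
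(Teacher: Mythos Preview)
Your opening, the fixed-point argument in $L^{3,\infty}_\sigma(D)$ via the duality estimate \eqref{evo4}, matches the paper exactly. The gap is in the bootstrap to \eqref{reg1}--\eqref{reg3}.

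The direct splitting $\int_{-\infty}^{t-1}+\int_{t-1}^t$ you propose does not close. From $u\in L^{3,\infty}$ you only have $u\otimes u\in L^{3/2,\infty}$, and the Lorentz version of \eqref{evo5} gives
\[
\|T(t,s)P\,\mbox{div}(u\otimes u)(s)\|_{q}\le C(t-s)^{-3/2+3/(2q)}\|u\otimes u\|_{3/2,\infty},
\]
whose exponent is $<-1$ for every $q>3$; the near piece is therefore \emph{not} integrable. Your remedy, ``embed $L^{3,\infty}\cap L^q$ into $L^{2r}$'', presupposes $u\in L^q$, which is exactly what you are trying to prove. The gradient estimate via \eqref{evo2} is worse: composing with $P\,\mbox{div}$ costs an extra $(t-s)^{-1/2}$, and no splitting rescues this. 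A related point is that at this stage you do not even have \eqref{NS-int} as a Bochner integral---only the weak form \eqref{weak-int} holds---so the estimates you write are not yet applicable to the object you possess.

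The paper bypasses both obstructions. First, still in the weak formulation, it introduces an auxiliary linear map $Q_u[V]$ obtained by freezing one factor of the nonlinearity at the known solution $u$; applying \eqref{evo4} with the pair $(q,r)=(6/5,2)$ produces a fixed point in $L^{3,\infty}\cap L^{6,\infty}$ which coincides with $u$ by uniqueness, so one reaches $u\in L^{q}_\sigma(D)$ for $q\in(3,6)$ without any Bochner integral. Second, for the $W^{1,q}$ regularity the paper abandons the infinite-time integral entirely and instead solves the \emph{initial value problem} \eqref{auxi-ivp} on a short interval $(t_0,t_0+2\gamma)$ with data $u(t_0)\in L^q$, $q\in(3,6)$; the local solution $\widetilde u$ carries $A^{1/2}\widetilde u\in L^r$ by \eqref{stokes-evo}, and uniqueness for the weak IVP forces $\widetilde u=u$. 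The estimates are then uniform because $\gamma$ depends only on $\sup_t\|u(t)\|_q$. This local-in-time identification (due to Kozono--Yamazaki) is the missing ingredient in your sketch, and it is also what ultimately justifies the Bochner form \eqref{NS-int} and the subsequent pressure recovery in Subsection~3.3.
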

\begin{remark}
(i)
Uniqueness of solutions
is established within \eqref{sol-cl} or merely within
$L^\infty(\mathbb R;\, L^{3,\infty}_\sigma(D))$
with small norm 
even if the sense of solutions is weaker than
described just before Theorem \ref{main1} and the same as in \cite{Y},
see \eqref{weak-int}.

(ii)
By \eqref{reg1}, especially, boundedness in time with values in $L^q_\sigma(D)$ for every $q\in (3,\infty)$,
the solution obtained in Theorem \ref{main1} satisfies \eqref{NS-int}
in $L^q_\sigma(D)$ for such $q$, 
see Lemma \ref{b-s} and Proposition \ref{q-bdd}.
Note, however, that the Bochner summability of the second integral
of the right-hand side of \eqref{NS-int}
in $L^{3,\infty}_\sigma(D)$ is not available unlike the
case of initial value problem \cite[Definition 2.1]{GH}.
\label{sol-bdd}
\end{remark}
\begin{corollary}
If in particular $\eta$ and $\omega$ are $l$-periodic with some period $l>0$ 
in addition to the assumption of Theorem \ref{main1},
then the solution $u(t)$ obtained in Theorem \ref{main1}
is periodic with the same period. 
The associated pressure $p(t)$ singled out as in Theorem \ref{main1}
for given $R> 2$ is also $l$-periodic.  
\label{per-sol}
\end{corollary}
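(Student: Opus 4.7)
The plan is to invoke the uniqueness part of Theorem \ref{main1} (see Remark \ref{sol-bdd}(i)) applied to the time-shifted function $\tilde u(t) := u(t+l)$. Because the $L^{3,\infty}_\sigma(D)$-norm appearing in \eqref{sol-cl} is invariant under time translation, $\tilde u$ lies in the uniqueness class with the same smallness constant as $u$. It therefore suffices to verify that $\tilde u$ satisfies the same integral equation \eqref{NS-int}; once this is done, uniqueness forces $\tilde u \equiv u$, i.e., $u$ is $l$-periodic.

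The verification is essentially a change of variable. From the definition \eqref{lift} and the $l$-periodicity of $(\eta,\omega)$, the lift $b$ is $l$-periodic, and hence so is the forcing term $f$ appearing in \eqref{force0}. Combined with the identity $T(t+l,s+l) = T(t,s)$ from \eqref{per-evo-op}, the substitution $s = \sigma + l$ in \eqref{NS-int} evaluated at time $t+l$ yields
\begin{align*}
\tilde u(t)
&= \int_{-\infty}^{t} T(t,\sigma)\,P f(\sigma)\,d\sigma \\
&\quad - \int_{-\infty}^{t} T(t,\sigma)\,P\,\mbox{div}\,(\tilde u\otimes\tilde u + \tilde u\otimes b + b\otimes\tilde u)(\sigma)\,d\sigma,
\end{align*}
which is precisely \eqref{NS-int} with $u$ replaced by $\tilde u$. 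Uniqueness then closes the argument for $u$.

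For the associated pressure, we argue in the same spirit. Once $u$ is known to be $l$-periodic and the data of \eqref{evo-eq} are $l$-periodic, the shifted pressure $p(\cdot,t+l)$ satisfies, restricted to $D_R$, the same relation (to be formulated via the weak form \eqref{weak-ext}) that determines $p(\cdot,t)$ from $u(\cdot,t)$ up to an additive constant. Since both $p(\cdot,t)$ and $p(\cdot,t+l)$ obey the normalization $\int_{D_R} p\,dx = 0$, they must coincide, proving $l$-periodicity of $p$.

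The main delicate point, rather than a genuine obstacle, is to confirm that the change of variable inside the improper integrals at $-\infty$ in \eqref{NS-int} is legitimate, which reduces to checking Bochner summability for $\tilde u$ just as for $u$; this is immediate from Lemma \ref{b-s} together with the bounds of Proposition \ref{q-bdd}, all of which $\tilde u$ inherits from $u$ via time translation.
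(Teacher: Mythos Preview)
Your proposal is correct and follows essentially the same approach as the paper: showing that $u(\cdot+l)$ satisfies \eqref{NS-int} in the same class \eqref{sol-cl} via the periodicity of $b$, $f$ and \eqref{per-evo-op}, then invoking the uniqueness in Remark~\ref{sol-bdd}(i); for the pressure, using \eqref{weak-ext} to see that $\nabla(p(t+l)-p(t))=0$ and concluding via the normalization $\int_{D_R}p\,dx=0$.
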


The following theorem answers to the main issue of this paper.
The condition \eqref{wake-cond} below 
tells us the interaction between the translation and rotation, which enables us to deduce
the pointwise behavior of the solution at infinity.
\begin{theorem}
In addition to \eqref{eta-om0}, suppose there is
a constant vector $\zeta\in\mathbb R^3$ such that
\begin{equation}
M:=
\sup_{-\infty<s<t<\infty}\left|\int_s^t \big\{\Phi(t,\tau)\eta(\tau)-\zeta\big\}\,d\tau\right|<\infty
\label{wake-cond}
\end{equation}
and that 
$\omega(t)$ is parallel to $\zeta$ for every $t\in\mathbb R$
(this latter condition is redundant if $\zeta=0$), where
$\{\Phi(t,s)\}_{t,s\in\mathbb R}$ denotes the family of evolution matrices 
to the system $\frac{d\phi}{dt}=-\omega(t)\times\phi$.
Then there exists a constant $\delta_*\in (0,\delta]$,
with $\delta$ being the constant in Theorem \ref{main1},
such that as long as  
$\|(\eta,\omega)\|_{W^{1,\infty}}\leq\delta_*$,
the solution $u(t)$ obtained in 
Theorem \ref{main1} enjoys the pointwide decay property
\begin{equation}
\sup_{t\in\mathbb R}|u(x,t)|
=O\big(|x|^{-1}(1+|\zeta||x|+\zeta\cdot x)^{-1}\big) 
\label{pointwise}
\end{equation}
as $|x|\to\infty$.
\label{main3}
\end{theorem}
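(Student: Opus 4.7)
The plan is to carry out the cutoff-and-reconstruction strategy announced in the introduction, with the explicit whole-space evolution operator $U(t,s)$ providing the required anisotropic pointwise kernel once conditions \eqref{wake-cond} and $\omega\parallel\zeta$ are in force. First I would fabricate a solenoidal extension to $\mathbb R^3$ of the solution $u$ supplied by Theorem \ref{main1}. With $\varphi$ as in \eqref{cut}, set $\tilde u(t) := (1-\varphi)u(t) + w(t)$, where $w(\cdot,t)$ is a Bogovskii-type corrector supported in $B_2\setminus B_1$ chosen so that $\mathrm{div}\,\tilde u=0$; the bounds \eqref{reg1}--\eqref{reg4} guarantee that $\tilde u\in BC_{w^*}(\mathbb R;L^{3,\infty}_\sigma(\mathbb R^3))\cap BC(\mathbb R;L^q_\sigma(\mathbb R^3))$ for every $q\in(3,\infty)$. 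Starting from the local weak form of $(u,p)$, multiplication by $1-\varphi$ produces commutator terms involving $\nabla\varphi\cdot u$, $u\otimes\nabla\varphi$, $p\nabla\varphi$ and lower-order pieces, all compactly supported in $B_2\setminus B_1$ and uniformly controlled in $t$. Collecting these gives a schematic identity
\[
\partial_t\tilde u + L_+^{\mathbb R^3}(t)\tilde u + P_{\mathbb R^3}\,\mathrm{div}(\tilde u\otimes\tilde u + b\otimes\tilde u + \tilde u\otimes b) = P_{\mathbb R^3}(g - \mathrm{div}\,G)
\]
on $\mathbb R^3$, where $L_+^{\mathbb R^3}(t)$ is the whole-space analogue of $L_+(t)$ and $g(t),\,G(t)$ are compactly supported in $B_2$ and uniformly bounded in every $L^q$ and every $L^{q,\infty}$.

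Next, I would convert this identity into an integral equation driven by $U(t,s)$ by using the backward solution $U(t,s)^*\psi$, $\psi\in C_{0,\sigma}^\infty(\mathbb R^3)$, as a test function. Pairing, integrating in $s$ over $(-\infty,t)$, and invoking the adjoint duality \eqref{L-adj} together with a density argument that legitimizes such a test function against the modest regularity of $\tilde u$, the standard Duhamel computation produces
\[
\tilde u(t) = \int_{-\infty}^t U(t,s)P_{\mathbb R^3}\bigl\{g(s) - \mathrm{div}\,G(s) - \mathrm{div}(\tilde u\otimes\tilde u + b\otimes\tilde u + \tilde u\otimes b)(s)\bigr\}\,ds,
\]
the convergence at the lower limit being supplied by a uniform $L^{q_0}$-bound of $u$ for some $q_0\in(1,3)$ in the spirit of the reduction leading to \eqref{NS-int}.

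The heart of the proof is the pointwise analysis of $U(t,s)P_{\mathbb R^3}$. Because $\omega(t)$ is parallel to $\zeta$, we have $\Phi(t,s)\zeta=\zeta$ for all $t\geq s$, and a standard conjugation by $\Phi$ together with a moving-frame substitution reduces $U(t,s)P_{\mathbb R^3}$ to the classical heat semigroup applied at a rotated, translated argument whose shift part is (up to sign) the integral $\int_s^t\Phi(t,\tau)\eta(\tau)\,d\tau$ appearing in \eqref{wake-cond}. By \eqref{wake-cond}, this shift equals $(t-s)\zeta$ modulo a bounded error of size $M$, so the kernel inherits Oseen-type anisotropic bounds with constant drift $\zeta$. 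Integrating these bounds against the compactly supported $g,\,G$ and against the nonlinear term---handled via the weak-$L^3$ estimate \eqref{sol-cl}---I would close a fixed-point estimate in the weighted pointwise norm
\[
\|\tilde u\|_\ast := \sup_{(x,t)\in\mathbb R^3\times\mathbb R}\,|x|\bigl(1+|\zeta||x|+\zeta\cdot x\bigr)|\tilde u(x,t)|
\]
provided $\|(\eta,\omega)\|_{W^{1,\infty}}\leq\delta_\ast$ is sufficiently small. Since $\tilde u=u$ for $|x|\geq 2$, the desired decay \eqref{pointwise} follows.

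The main obstacle I anticipate is precisely this kernel bound: one must reconcile the time-dependent rotation $\Phi(t,\tau)$ inside the integrated drift with the constant-drift Oseen structure aligned with $\zeta$, and then verify that the bounded correction from \eqref{wake-cond} can be absorbed into the anisotropic exponential factor without destroying the wake geometry; this is in fact the structural reason why \eqref{wake-cond} is formulated in terms of the evolution matrices rather than $\eta$ alone. A secondary technical hurdle is the justification of the test-function step producing the integral equation, which requires both the local regularity \eqref{reg2}--\eqref{reg4} of the original solution and a density argument on the final data $\psi$ so that $U(t,s)^*\psi$ is admissible against the weak form satisfied by $\tilde u$.
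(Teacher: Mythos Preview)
Your overall architecture matches the paper's: cut off to the whole space with a Bogovskii corrector, derive a weak form and then an integral equation driven by $U(t,s)$ via the adjoint backward problem (the density issue you flag is indeed the point---one needs $C_{0,\sigma}^\infty(\mathbb R^3)$ dense in $Z_q=\{u\in L^q_\sigma\cap W^{1,q}:|x|\nabla u\in L^q\}$ because of the drift term $v\otimes(\omega\times x)$), and exploit that \eqref{wake-cond} together with $\omega\parallel\zeta$ turns the kernel of $U(t,s)P_{\mathbb R^3}$ into an Oseen-type kernel with constant drift $\zeta$ up to a bounded spatial shift.

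The gap is in how you close on the nonlinearity. You write that $\mathrm{div}(\tilde u\otimes\tilde u)$ is ``handled via the weak-$L^3$ estimate \eqref{sol-cl}'', but knowing only $\tilde u\in L^{3,\infty}$ gives $\tilde u\otimes\tilde u\in L^{3/2,\infty}$, and there is no reason the operator $\Lambda G=-\int_{-\infty}^t U(t,s)P_{\mathbb R^3}\,\mathrm{div}\,G(s)\,ds$ should map $L^\infty(\mathbb R;L^{3/2,\infty})$ into the anisotropic pointwise class: the wake structure of the output requires wake structure of the input. What the kernel bounds actually yield is $[\Lambda G]_{1,\zeta}\le C[G]_{2,\zeta}$, hence $[\Lambda(V\otimes V)]_{1,\zeta}\le C[V]_{1,\zeta}^2$. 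The paper therefore does \emph{not} bootstrap the cut-off function $v$ directly; it runs a separate contraction in the weighted space $X_{1,\zeta}$ to produce a \emph{new} solution $\widetilde v$ of the same integral equation with $[\widetilde v]_{1,\zeta}$ small, observes that $X_{1,\zeta}\subset L^\infty(\mathbb R;L^{3,\infty}_\sigma)$, and then invokes uniqueness of small $L^{3,\infty}$-solutions to \eqref{mild-wh} (proved exactly as for \eqref{weak-int}, using \eqref{G-est} and the whole-space analogue of \eqref{evo4}) to conclude $\widetilde v=v$. This reconstruction-plus-identification step is missing from your outline, and without it the argument does not close.

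A smaller slip: the convergence of $U(t,\tau)\tilde u(\tau)$ as $\tau\to-\infty$ cannot come from an $L^{q_0}$-bound with $q_0\in(1,3)$, since the solution of Theorem~\ref{main1} is only known to lie in $L^{3,\infty}\cap L^q$ for $q>3$ (a steady flow decaying like $|x|^{-1}$ is not in $L^q$ for $q\le 3$). One takes instead $q_0\in(3,q)$ and uses the $L^{q_0}$--$L^q$ decay of $U(t,\tau)$.
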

\begin{remark}
In \cite{G20, G20a, G20b, G-new} Galdi constructed a unique periodic Navier-Stokes flow
possessing the pointwise decay property \eqref{pointwise} with
$\zeta=\frac{1}{l}\int_0^l\eta(t)\,dt$
if $\eta$ and $\omega$ fulfill, besides smallness as well as suitable regularity,
either of the following assumptions:
(i) $\eta=0$ or $\omega=0$;
(ii) they are parallel to the same constant vector.
Note that the case (ii) implies that $\Phi(t,\tau)\eta(\tau)=\eta(\tau)$ and, therefore,
the condition \eqref{wake-cond} is met with $\zeta$ specified above.
In addition to the cases he has discovered,
Theorem \ref{main3} covers the following situation:
$\int_0^l\omega(t)\,dt=0$ and the direction of $\omega(t)$ 
is parallel to $\zeta$ for every $t\in\mathbb R$.
In fact, this circumstance leads to 
$\Phi(t+l,0)=\Phi(t,0)$ (which is not true in general) and
\eqref{wake-cond} is met with
$\zeta=\frac{1}{l}\int_0^l \Phi(t,0)^\top\eta(t)\,dt$, where $(\cdot)^\top$ denotes the transposed matrix,
because
\[
\left|\int_s^t\big\{\Phi(t,\tau)\eta(\tau)-\zeta\big\}\,d\tau\right|=
\left|\int_s^t\left\{\Phi(\tau,0)^\top\eta(\tau)-\zeta\right\}\,d\tau\right|
\]
is uniformly bounded in $t,\,s$ for such $\zeta$.
Here, the direction of $\eta(t)$ can be different from $\zeta$.
\label{compare-galdi}
\end{remark}

\section{Proof of Theorem \ref{main1}}
\label{pr1}

\noindent
{\bf 3.1. Weak form of the integral equation}
\medskip

Uniqueness and existence of solutions within $L^\infty(\mathbb R;\,L^{3,\infty}_\sigma(D))$
are established for the weak form
\begin{equation}
\langle u(t),\psi\rangle=
\int_{-\infty}^t\langle (u\otimes u+u\otimes b+b\otimes u-F)(s),\,
\nabla T(t,s)^*\psi\rangle\,ds \;\;
\mbox{for all $\psi\in C_{0,\sigma}^\infty(D)$}
\label{weak-int} 
\end{equation}
as in Yamazaki \cite{Y}.
The argument is exactly the same except for the continuity of the solution in $t$.
In fact, let both $u$ and $\widetilde u$ satisfy \eqref{weak-int} with \eqref{sol-cl},
where weak* continuity in $t$ is not needed for uniqueness, then we use \eqref{evo4} 
($q=3/2,\,r=3$) and $L^{3,\infty}$-estimate of $b$, that follows from
\eqref{lift-est},
together with the duality relation $L^{3,\infty}_\sigma(D)=L^{3/2,1}_\sigma(D)^*$ to obtain
\[
\sup_{t\in\mathbb R}\|u(t)-\widetilde u(t)\|_{3,\infty}
\leq C\|(\eta,\omega)\|_{W^{1,\infty}}\sup_{t\in\mathbb R}\|u(t)-\widetilde u(t)\|_{3,\infty}
\]
which implies $u=\widetilde u$ under the smallness of $\eta$ and $\omega$.
Define the map $\Phi$ by the relation
\[
\langle(\Phi u)(t),\psi\rangle:=\mbox{the right hand-side of \eqref{weak-int}},
\]
then \eqref{evo4}, \eqref{lift-est} and \eqref{f-F} imply the existence of a closed ball 
in $L^\infty(\mathbb R;\,L^{3,\infty}_\sigma(D))$,
whose radius is controlled by $\|(\eta,\omega)\|_{W^{1,\infty}}$ and which is invariant under the action
of the map $\Phi$, provided that $\eta$ and $\omega$ are small enough.
It follows from the same manner as above for uniqueness that $\Phi$ is contractive on the ball.
The fixed point of $\Phi$ provides a solution to the weak form \eqref{weak-int}
with \eqref{sol-cl}, 
however, not to \eqref{NS-int} yet.

Unlike \cite{Y}, continuity \eqref{sol-cl} is proved
only in weak* sense 
because of the non-autonomous character.
See Takahashi \cite[Lemma 4.1, (4.4)]{TT}, in which
$\int_{-\infty}^t$ is replaced by $\int_0^t$ in \eqref{weak-int} since initial value problem is discussed there.
On account of \eqref{evo4} even for the integral $\int_{-\infty}^t$, his argument works well without any change.

We will discuss the additional regularity properties \eqref{reg1}--\eqref{reg4} in the following two subsections.
To this end, we have to show first of all that
$u\in L^\infty(\mathbb R;\,L^q_\sigma(D))$ with some $q>3$, which is
still based on \eqref{weak-int}.
Although this can be deduced for every $q\in (3,\infty)$ by using solely \eqref{weak-int},
what is unpleasant is that
the smallness of $(\eta,\omega)$ depends on $q\in (3,\infty)$ (as in \cite{Hi09}).
Since the only thing we need in this step to proceed to the next step is the property above for some $q>3$
(in fact, we deduce \eqref{reg1} 
for every $q\in (3,\infty)$ in subsection 3.2 without additional smallness),
let us 
show that
\begin{equation}
u\in BC_{w^*}(\mathbb R;\,L^{6,\infty}_\sigma(D)), \qquad
\sup_{t\in\mathbb R}\|u(t)\|_{6,\infty}\leq C\|(\eta,\omega)\|_{W^{1,\infty}}
\label{bdd-zuerst0}
\end{equation}
which combined with \eqref{sol-cl} implies
\begin{equation}
u\in BC_w(\mathbb R;\,L^q_\sigma(D)), \quad
\sup_{t\in\mathbb R}\|u(t)\|_q\leq C\|(\eta,\omega)\|_{W^{1,\infty}}\quad
\mbox{for all $q\in (3,6)$}
\label{bdd-zuerst}
\end{equation}
with some constant $C=C(q,D)>0$.
The idea is the same as in the proof of \cite[Theorem 2.1 (2)]{Hi09} by the present author,
in which the similar property is deduced in the regime of 
rotating obstacle with constant angular velocity as well as external forcing.
We give just a sketch.
Let $u$ be the solution to \eqref{weak-int} obtained above, and let us define an
auxiliary map $Q_u$ by the relation
\[
\langle Q_u[V](t),\psi\rangle=
\int_{-\infty}^t\langle (u\otimes V+u\otimes b+b\otimes u-F)(s),\,
\nabla T(t,s)^*\psi\rangle\, ds \;\;
\mbox{for all $\psi\in C_{0,\sigma}^\infty(D)$}.
\]
Then 
\eqref{evo4} (with $q=3/2,\,6/5$) gives us a fixed
point $V\in L^\infty(\mathbb R;\,L^{3,\infty}_\sigma(D)\cap L^{6,\infty}_\sigma(D))$
of the map $Q_u$ provided 
$\|u(t)\|_{3,\infty}$ is small enough, which is accomplished through \eqref{sol-cl} if
$\eta$ and $\omega$ are still smaller,
where $L^{q,\infty}$-estimate of $b$ (with $q=3,\,6$), that follows from
\eqref{lift-est}, 
and \eqref{f-F} (with $r=3/2,\,2$) are employed.
Since it is unique only within $L^\infty(\mathbb R;\,L^{3,\infty}_\sigma(D))$
and since $u$ itself fulfills $Q_u[u]=u$, we see that $u$ must coincide with $V$,
which implies \eqref{bdd-zuerst0}--\eqref{bdd-zuerst},
where weak* (or weak) continuity also follows from the continuity
\eqref{sol-cl} and the obtained estimates.
The small constant $\delta\in (0,1]$ in Theorem \ref{main1} is determined at this stage.
\medskip

\noindent
{\bf 3.2. Regularity in $x$}
\medskip

We fix $q\in (3,6)$ arbitrarily.
With $u(t)\in L^q_\sigma(D)$ at hand, let us proceed to the next stage, in which
the solution $u(t)$ to \eqref{weak-int} is identified with a local solution to the initial
value problem 
\begin{equation}
u(t)=T(t,t_0)u(t_0)
-\int_{t_0}^tT(t,s)P(u\cdot\nabla u+b\cdot\nabla u+u\cdot\nabla b-f)(s)\, ds
\label{auxi-ivp}
\end{equation}
in a neighborhood of each instant $\tau_0\in\mathbb R$.
This idea was developed by Kozono and Yamazaki \cite{KY2}
and also adapted in \cite{Hi09}, \cite{TT}.
We use the following elementary lemma.
The reason why we employ the operator $A^{1/2}$ (as in \cite{Hi09}),
see \eqref{stokes} and \eqref{stokes-evo},
is to verify the boundary condition $u(t)|_{\partial D}=0$ in the sense 
of trace via $u(t)\in D_q(A^{1/2})\subset W^{1,q}_0(D)$.
\begin{lemma}
Let $1<q<\infty$, $0\leq \alpha <1$ and $t_0\in\mathbb R$.
Set $I=(t_0,t_0+\tau_*)$ with some $\tau_*\in (0,2]$.
Suppose that
\begin{equation}
(t-t_0)^\alpha g\in L^\infty(I;\,L^q_\sigma(D)), \quad
\|g(t)\|_q\leq k_g\,(t-t_0)^{-\alpha}\quad (\mbox{\rm a.e.}\, t\in I)
\label{force-cond-0}
\end{equation}
with some constant $k_g>0$.
Then 
\begin{equation}
u(t):=\int_{t_0}^tT(t,s)g(s)\,ds \qquad (t\in I)
\label{duha-func}
\end{equation}
is well-defined as function that satisfies
\begin{equation}
u\in C(I;\,L^r_\sigma(D)), \quad 
\|u(t)\|_r\leq Ck_g(t-t_0)^{-\alpha-(3/q-3/r)/2+1}
\label{duha-est1}
\end{equation}
\begin{equation}
A^{1/2}u\in C_w(I;\,L^\rho_\sigma(D)), \quad
\|A^{1/2}u(t)\|_\rho\leq C^\prime k_g(t-t_0)^{-\alpha-(3/q-3/\rho)/2+1/2}
\label{duha-est2}
\end{equation}
for all $t\in I$
with some constants $C=C(q,r,\alpha,D),\, C^\prime=C^\prime(q,\rho,\alpha,D)>0$, 
which are independent of $t_0\in\mathbb R$
and $\tau_*\in (0,2]$,
where $r\in [q,\infty)$ 
with $1/r>1/q-2/3$,
and $\rho\in [q,\infty)$ with $1/\rho>1/q-1/3$ as well as $\rho >3/2$.
\label{inhomo-0}
\end{lemma}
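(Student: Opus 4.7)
The plan is to reduce both bounds to Duhamel-type time-convolution estimates by inserting the smoothing properties of the evolution operator pointwise in $s$ and then recognising the resulting time integral as a Beta function in $(t-t_0)$. Since $I$ has length at most $\tau_*\le 2$, we stay in the short-time regime throughout; by Remark \ref{rem-evo} (i) the gradient bound \eqref{evo2} then extends from $1<q\le\rho\le 3$ to $1<q\le\rho<\infty$ with constant $C=C(\tau_*)$, which covers the full range of $\rho$ in \eqref{duha-est2}. Because all constants in \eqref{evo1}--\eqref{evo2} depend only on $D$, $q$, $r$, $\tau_*$ (together with the a priori bound $\|(\eta,\omega)\|_{W^{1,\infty}}\le 1$ from Remark \ref{rem-evo} (iv)) and never on the base time, the resulting bounds on $u(t)$ will be automatically uniform in $t_0\in\mathbb R$.

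For \eqref{duha-est1}, combining \eqref{evo1} with the hypothesis yields the pointwise bound $\|T(t,s)g(s)\|_r\le Ck_g(t-s)^{-\beta}(s-t_0)^{-\alpha}$, where $\beta:=(3/q-3/r)/2<1$ by $1/r>1/q-2/3$. Since $\alpha<1$ as well, the Beta identity $\int_{t_0}^t(t-s)^{-\beta}(s-t_0)^{-\alpha}\,ds=B(1-\beta,1-\alpha)(t-t_0)^{1-\alpha-\beta}$ gives both Bochner summability and the claimed power bound. For \eqref{duha-est2}, I would use the norm equivalence $\|A^{1/2}v\|_\rho\sim\|\nabla v\|_\rho$ on $D_\rho(A^{1/2})\subset W^{1,\rho}_0(D)$ from Borchers--Miyakawa \cite{BM90} to replace $A^{1/2}T(t,s)g(s)$ by its gradient in $L^\rho$ and then apply \eqref{evo2}, obtaining $\|A^{1/2}T(t,s)g(s)\|_\rho\le Ck_g(t-s)^{-\gamma}(s-t_0)^{-\alpha}$ with $\gamma:=1/2+(3/q-3/\rho)/2<1$ by $1/\rho>1/q-1/3$; the same Beta computation then closes the estimate. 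The restriction $\rho>3/2$ appears precisely so as to stay inside the range for which the square-root identification with $W^{1,\rho}_0(D)$ in \cite{BM90} is available.

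Strong continuity of $u$ in $L^r_\sigma(D)$ follows from the standard splitting $u(t+h)-u(t)=\int_t^{t+h}T(t+h,s)g(s)\,ds+\int_{t_0}^t\{T(t+h,s)-T(t,s)\}g(s)\,ds$: the first summand is controlled by the Beta estimate on an interval of length $|h|$, while the second vanishes by strong continuity of $t\mapsto T(t,s)f$ together with dominated convergence, whose integrable majorant is supplied by \eqref{evo1}. Weak continuity of $A^{1/2}u$ in $L^\rho_\sigma(D)$ is obtained by testing against $\psi\in C_{0,\sigma}^\infty(D)$, shifting $A^{1/2}$ onto $\psi$ via the adjoint identity for the Stokes square root, and invoking the already-proven strong continuity of $u$ in an appropriate $L^{r_0}$-scale. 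The main obstacle I anticipate is not any single estimate but the careful tracking of admissible exponent pairs so that at every step \eqref{evo1} and \eqref{evo2} can be chained on the bounded interval $I$ without losing the $t_0$-uniformity of constants and without leaving the regime in which $D_\rho(A^{1/2})$ coincides with $W^{1,\rho}_0(D)\cap L^\rho_\sigma(D)$.
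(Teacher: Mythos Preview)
Your proposal is correct and follows essentially the same route as the paper: Beta-function estimates from \eqref{evo1}--\eqref{evo2} for the two bounds, the Borchers--Miyakawa inequality $\|A^{1/2}f\|_\rho\le C\|\nabla f\|_\rho$ to pass from $\nabla T(t,s)$ to $A^{1/2}T(t,s)$, and the reduction of weak continuity of $A^{1/2}u$ to strong continuity of $u$ by moving $A^{1/2}$ onto the test function.

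Two points deserve comment. First, your explanation of the restriction $\rho>3/2$ is not the paper's and is in fact incorrect: the Borchers--Miyakawa inequality is valid for every $\rho\in(1,\infty)$. In the paper the restriction enters because one must first know that $T(t,s)f$ lies in the domain where that inequality applies; the paper verifies $T(t,s)f\in D_\rho(L_+(t))\subset D_\rho(A)$ for $f\in C_{0,\sigma}^\infty(D)$ via \cite[Proposition 2.1]{Hi20}, and that regularity statement (traced to \cite[Lemma 5.2]{Hi20}) requires $\rho>3/2$. General $f\in L^q_\sigma(D)$ is then handled by density and closedness of $A^{1/2}$. Your argument implicitly assumes $T(t,s)g(s)\in D_\rho(A^{1/2})$ without supplying this step. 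Second, for the strong continuity of $u$ in $L^r_\sigma(D)$ the paper explicitly flags that this is ``not obvious for the non-autonomous case'' and cites \cite[Lemma 4.6]{TT}; your dominated-convergence argument works, but relies on strong continuity of $t\mapsto T(t,s)f$ in $L^r$ for fixed $s$, which in the non-autonomous setting is a genuine input from \cite{Hi20} rather than a triviality.
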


\begin{proof}
Estimate \eqref{duha-est1} is obvious by virtue of \eqref{evo1}.
To show estimate \eqref{duha-est2}, as in \cite[Lemma 3.3]{Hi09},
let us recall $\|A^{1/2}f\|_r\leq C\|\nabla f\|_r$
for every $r\in (1,\infty)$ and $f\in D_r(A)$,
see Borchers and Miyakawa \cite[Theorem 4.4]{BM90}.
Let $f\in C_{0,\sigma}^\infty(D)$, then it follows from \cite[Proposition 2.1]{Hi20} that
$T(t,s)f\in D_r(L_+(t))\subset D_r(A)$ for $t>s$ as long as $r\in (3/2,\infty)$,
where this restriction on the exponent $r$ stems from Lemma 5.2 of \cite{Hi20}.
By 
Remark \ref{rem-evo} (i) on the smoothing rate
near $t=s$, we have
\begin{equation}
\|A^{1/2}T(t,s)f\|_r\leq C\|\nabla T(t,s)f\|_r
\leq C(t-s)^{-1/2-(3/q-3/r)/2}\|f\|_q
\label{stokes-evo}
\end{equation}
for $t-s\leq \tau_*\leq 2$ and $1<q\leq r<\infty$ as well as $r>3/2$.
By closedness of $A^{1/2}$, it turns out that $T(t,s)f\in D_r(A^{1/2})$ for general $f\in L^q_\sigma(D)$ along with 
\eqref{stokes-evo}, 
which implies the estimate in \eqref{duha-est2}.
The emphasis is that the constant $C>0$ in the right-hand side
of \eqref{stokes-evo} is independent of $t,\,s$
with $t-s\leq 2$.

The strong continuity \eqref{duha-est1} of $u(t)$ with values in $L^r_\sigma(D)$,
that is not obvious for the non-autonomous case,
is due to Takahashi \cite[Lemma 4.6]{TT}.
As for the other continuity property \eqref{duha-est2},
since we know the estimate 
in \eqref{duha-est2},
it suffices to show that
$\langle A^{1/2}(u(t+h)-u(t)),\psi\rangle$
goes to zero as $h\to 0$ for 
$\psi\in C_{0,\sigma}^\infty(D)$,  
which immediatelly follows from 
$u\in C(I;\,L^r_\sigma(D))$.
The proof is complete.
\end{proof}

Since 
$q\in (3,6)$, 
the length of the existence interval of a local solution to \eqref{auxi-ivp}
with $u(t_0)\in L^q_\sigma(D)$
can be estimated from below by $\|u(t_0)\|_q$ as well as $\|(\eta,\omega)\|_{W^{1,\infty}}$,
which is uniformly bounded, see \eqref{bdd-zuerst}.
Thus, as in \cite[Proposition 3.2, Proof of (3) of Theorem 2.1]{Hi09} and
\cite[Proposition 4.7, Lemma 4.8, Proof of Theorem 2.1]{TT},
one can easily find $\gamma\in (0,1]$, which
is independent of $\tau_0$ and $(\eta,\omega)$ satisfying $\|(\eta,\omega)\|_{W^{1,\infty}}\leq\delta$,
such that 
for any $\tau_0\in\mathbb R$, 
\eqref{auxi-ivp} with $t_0=\tau_0-\gamma$
admits a solution, denoted by $\widetilde u(t)$, 
on the interval $I_{\tau_0}:=(\tau_0-\gamma, \tau_0+\gamma)$, which satisfies
\begin{equation}
\widetilde u\in C(I_{\tau_0};\,L^r_\sigma(D))), \qquad  
A^{1/2}\widetilde u\in C_w(I_{\tau_0};\,L^r_\sigma(D))
\label{local-conti}
\end{equation}
\begin{equation}
\begin{split}
(t-t_0)^{(3/q-3/r)/2}\|\widetilde u(t)\|_r+
&(t-t_0)^{1/2+(3/q-3/r)/2}\|A^{1/2}\widetilde u(t)\|_r  \\
&\leq C\big(\|u(t_0)\|_q+\|(\eta,\omega)\|_{W^{1,\infty}}\big)
\end{split}
\label{local-est}
\end{equation}
for all $t\in I_{\tau_0}$ 
and $r\in [q,\infty)$ 
by using Lemma \ref{inhomo-0} and \eqref{f-F},
where the constant $C=C(q,r,D)>0$
is independent of $\tau_0$.
With this solution $\widetilde u(t)$ at hand, we have the following proposition.
\begin{proposition}
Let $u(t)$ be the solution to \eqref{weak-int}
obtained in subsection 3.1.
Then $u(t)$ is a solution to \eqref{NS-int} in $L^r_\sigma(D)$
for every $r\in (3,\infty)$ 
possessing \eqref{sol-cl}--\eqref{reg3}.
\label{q-bdd}
\end{proposition}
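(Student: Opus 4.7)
The plan is to promote the weak-form solution $u$ obtained in subsection~3.1 to a genuine solution of the pointwise integral equation \eqref{NS-int} in $L^r_\sigma(D)$ for every $r\in(3,\infty)$, together with the regularity \eqref{reg1} and the estimate \eqref{reg3}. The main idea is to identify $u$ on every bounded time interval $I_{\tau_0}=(\tau_0-\gamma,\tau_0+\gamma)$ with the locally strong solution $\widetilde u$ of the initial value problem \eqref{auxi-ivp}, and then to pass to the limit $t_0\to-\infty$ in \eqref{auxi-ivp}.

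First I fix $\tau_0\in\mathbb R$ and some $q\in(3,6)$. By \eqref{bdd-zuerst} the datum $u(t_0)$ at $t_0:=\tau_0-\gamma$ lies in $L^q_\sigma(D)\cap L^{3,\infty}_\sigma(D)$ with norm bounded uniformly in $\tau_0$, and the local solution $\widetilde u$ to \eqref{auxi-ivp} satisfies \eqref{local-conti}--\eqref{local-est} on $I_{\tau_0}$. To identify $u$ with $\widetilde u$ there, I would first check that $u$ itself solves \eqref{auxi-ivp} in the weak form by splitting each $\int_{-\infty}^t$ appearing in \eqref{weak-int} at $s=t_0$ and using the adjoint semigroup relation $T(t,s)^*=T(t_0,s)^*T(t,t_0)^*$ for $s\le t_0\le t$; the $\int_{-\infty}^{t_0}$ part then reduces to $\langle T(t,t_0)u(t_0),\psi\rangle$ via \eqref{weak-int} at time $t_0$ (after extending the admissible test-function class from $C_{0,\sigma}^\infty(D)$ to $L^{3/2,1}_\sigma(D)$ by density and \eqref{evo4}). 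Both $u$ and $\widetilde u$ then satisfy the same weak initial value problem with initial datum $u(t_0)$; uniqueness within $L^\infty(I_{\tau_0};L^q_\sigma(D))$, obtained by a standard contraction argument based on \eqref{evo5} on a short sub-interval and chained across $I_{\tau_0}$, forces $u=\widetilde u$ on $I_{\tau_0}$.

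Transplanting \eqref{local-conti}--\eqref{local-est} to $u$ on each $I_{\tau_0}$, with constants independent of $\tau_0$, immediately delivers \eqref{reg1} and \eqref{reg3} for every $r\in[q,\infty)$; since $q\in(3,6)$ is arbitrary, these properties hold for every $r\in(3,\infty)$. The $L^r$-bound on $A^{1/2}u(t)$ furnished by \eqref{local-est}, combined with $D_r(A^{1/2})\subset W^{1,r}_0(D)$ and the equivalence $\|A^{1/2}f\|_r\sim\|\nabla f\|_r$, simultaneously produces the gradient estimate in \eqref{reg3} and verifies the no-slip condition on $\partial D$.

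Finally, to derive \eqref{NS-int} I fix $t\in\mathbb R$ and $r\in(3,\infty)$, rewrite $f=\mathrm{div}\,F$ in \eqref{auxi-ivp}, and let $t_0\to-\infty$. Lemma~\ref{b-s} with \eqref{evo5} and \eqref{f-F}, applied under the regularity just established, secures Bochner summability of the nonlinear and forcing integrals on $(-\infty,t)$ in $L^r_\sigma(D)$; the linear term vanishes because, for any fixed $q_0\in(3,r)$, \eqref{evo1} gives
\[
\|T(t,t_0)u(t_0)\|_r\le C(t-t_0)^{-(3/q_0-3/r)/2}\|u(t_0)\|_{q_0}\longrightarrow 0
\]
as $t_0\to-\infty$, since $\sup_t\|u(t)\|_{q_0}<\infty$ by the preceding step. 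The principal obstacle is the identification $u=\widetilde u$: one must justify the adjoint semigroup manipulation with an enlarged test-function class and then bridge the weak-$L^3$ setting of $u$ with the $L^q$-setting of $\widetilde u$ via finite-interval uniqueness, after which the passage to the limit and all regularity properties follow routinely.
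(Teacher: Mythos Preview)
Your proposal is correct and follows essentially the same route as the paper: identify $u$ with the local strong solution $\widetilde u$ on each $I_{\tau_0}$ by extending the test-function class in \eqref{weak-int} to $L^{3/2,1}_\sigma(D)$, substituting $T(t,t_0)^*\psi$, and invoking uniqueness in $L^\infty(I_{\tau_0};L^q_\sigma(D))$; then transplant \eqref{local-conti}--\eqref{local-est} and let the initial time go to $-\infty$. The only step you elide is that before sending $t_0\to-\infty$ in \eqref{auxi-ivp} you need the identity \eqref{NS-int-pre} for \emph{every} pair $\tau<t$, not only for $t_0=\tau_0-\gamma$ and $t\in I_{\tau_0}$; the paper obtains this by a finite covering $[\tau,t]\subset\bigcup_j I_{\tau_j}$ and chaining the local Duhamel formulas, which is routine but worth stating.
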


\begin{proof}
Given arbitrary $\tau_0\in\mathbb R$, we already have the solution $\widetilde u(t)$ to \eqref{auxi-ivp} on
$I_{\tau_0}$.
On the other hand, the solution $u(t)$ to \eqref{weak-int} 
obtained in subsection 3.1 satisfies the weak form
\begin{equation}
\langle u(t),\psi\rangle
=\langle T(t,t_0)u(t_0),\psi\rangle
+\int_{t_0}^t\langle (u\otimes u+u\otimes b+b\otimes u-F)(s), \nabla T(t,s)^*\psi\rangle\, ds
\label{weak-ivp}
\end{equation}
for all $\psi\in C_{0,\sigma}^\infty(D)$
on the same interval $I_{\tau_0}$ bacause, by continuity, $u(t)$ enjoys \eqref{weak-int} for all 
$\psi\in L^{3/2,1}_\sigma(D)$,
so that one can replace $\psi$ by $T(t,t_0)^*\psi$ in \eqref{weak-int} with $t=t_0$ to obtain
a formula of $\langle T(t,t_0)u(t_0), \psi\rangle$.
From this we see that 
$u(t)$ actually fulfills \eqref{weak-ivp} for all $\psi\in C_{0,\sigma}^\infty(D)$.
As a consequence, both $u(t)$ and $\widetilde u(t)$ 
belong to $L^\infty(I_{\tau_0};\,L^q_\sigma(D))$ as well as enjoy \eqref{weak-ivp}.
Since the uniqueness of solutions to \eqref{weak-ivp} within this class with $q>3$ is verified as in 
\cite[Lemma 3.4]{Hi09} and 
\cite[Lemma 4.5]{TT}
(where continuity of solutions in $t$ is also assumed in the former paper, however, it is redundant from the proof as in the latter paper),
we infer that $u(t)=\widetilde u(t)$ on $I_{\tau_0}$.
Therefore, the solution $u(t)$ possesses all the properties deduced from \eqref{auxi-ivp}.
In constructing $\widetilde u(t)$ for \eqref{auxi-ivp},
we can take $q\in (3,6)$ as close to $3$ as we wish,
so we obtain \eqref{local-conti}--\eqref{local-est}
with $r\in (3,\infty)$ for $\widetilde u(t)$ replaced by $u(t)$ to lead to
$u\in C(\mathbb R;\, L^r_\sigma(D))\cap C_w(\mathbb R;\, W^{1,r}_0(D))$
for every $r\in (3,\infty)$.
Let us restrict \eqref{local-est} to the interval
$J_{\tau_0}:=(\tau_0-\frac{\gamma}{2}, \tau_0+\gamma)\subset I_{\tau_0}$;
then, by \eqref{bdd-zuerst} we get 
\begin{equation}
\|u(t)\|_r 
+\|\nabla u(t)\|_r
\leq C\|(\eta,\omega)\|_{W^{1,\infty}} \qquad (t\in J_{\tau_0})
\label{bdd-danach}
\end{equation}
for every 
$r\in (3,\infty)$ 
with some constant $C=C(r,D)>0$ 
independent of $\tau_0\in\mathbb R$.
Since $\tau_0$ is arbitrary, we conclude \eqref{reg1}--\eqref{reg3}.

We use Lemma \ref{b-s}
with the aid of \eqref{reg3} 
as well as \eqref{lift-est} to see that
the integral of the right-hand side of \eqref{NS-int}
is Bochner summable
in $L^r(D)$ for every $r\in (3,\infty]$; to be precise,
\begin{equation*}
\begin{split}
&\quad \int_{-\infty}^t
\|T(t,s)P\,\mbox{div $(u\otimes u+u\otimes b+b\otimes u)$}(s)\|_r\, ds \\
&\leq C\left(\sup_{t\in\mathbb R}\big(\|u(t)\|_{r_0}^2+\|u(t)\|_{r_1}^2\big)
+\|(\eta,\omega)\|_{W^{1,\infty}}^2\right)
\end{split}
\end{equation*}
where 
$r_0$ and $r_1$ are chosen in such a way that
$3<r_0<6r/(r+3)<r_1<2r$ for given
$r\in (3,\infty]$.
The term involving the force $f$ is harmless on account of \eqref{f-F}.
We already know that
\begin{equation}
u(t)=T(t,\tau)u(\tau)
+\int_\tau^tT(t,s)P
\big[f(s)-\mbox{div$(u\otimes u+u\otimes b+b\otimes u)$}(s)\big]\, ds
\label{NS-int-pre}
\end{equation}
in $L^r_\sigma(D)$, $3<r<\infty$,
for all $\tau,\, t$ with $-\infty <\tau <t<\infty$.
In fact, for each $\tau_0\in [\tau,t]$, we have \eqref{NS-int-pre}
with $\tau,\, t$ replaced by every pair of
$\tau^\prime,\, t^\prime\in I_{\tau_0}=(\tau_0-\gamma, \tau_0+\gamma)$ satisfying
$\tau^\prime< t^\prime$, 
where $I_{\tau_0}$ is the existence interval of $\widetilde u(t)$ constructed above.
Clearly, there are $\tau_1, \cdots, \tau_k\in [\tau,t]$
such that $[\tau,t]\subset\bigcup_{j=1}^kI_{\tau_j}$,
yielding \eqref{NS-int-pre}.
Since
\[
\|T(t,\tau)u(\tau)\|_r\leq C(t-\tau)^{-(3/q_0-3/r)/2}\|u(\tau)\|_{q_0}
\to 0\quad (\tau\to -\infty)
\]
with some $q_0\in (3,r)$ follows from \eqref{evo1} and
\eqref{reg3}, $u(t)$ satisfies
\eqref{NS-int} in $L^r_\sigma(D)$ for every $r\in (3,\infty)$.
This completes the proof.
\end{proof}

\noindent
{\bf 3.3. Regularity in $t$ and the pressure}
\medskip

In this subsection the other regularity \eqref{reg2} of the solution $u(t)$ to \eqref{NS-int}
obtained in Proposition \ref{q-bdd} is deduced 
together with recovery of the associated pressure.
We also derive the other weak form, see \eqref{weak-ext} below. 
To this end, the equation \eqref{NS-int} itself does not seem convenient.
Instead, we deal with \eqref{NS-int-pre} or \eqref{auxi-ivp}.
Let us use \eqref{auxi-ivp}, however, it is not ragarded as the initial value problem to be solved unlike
the argument in the previous subsection.
In the present context that we already have the solution $u(t)$ with \eqref{sol-cl}--\eqref{reg3}, 
we may choose $t_0$ as we wish.
Given arbitrary $\tau_0\in\mathbb R$, we simply take $t_0=\tau_0-1$, $I_{\tau_0}:=(\tau_0-1, \tau_0+1)$ and
$J_{\tau_0}:=(\tau_0-\frac{1}{2}, \tau_0+1)$.
We then intend to show the desired properties of the solution $u(t)$ on the latter interval $J_{\tau_0}$
by investigating each of \eqref{sol-sepa} below separately:  
$u(t)=v(t)+w(t)$
with
\begin{equation}
\begin{split}
&v(t):=T(t,t_0)u(t_0), \\
&w(t):=-\int_{t_0}^tT(t,s)P(u\cdot\nabla u+b\cdot\nabla u+u\cdot\nabla b-f)(s)\,ds.
\end{split}
\label{sol-sepa}
\end{equation}
We apply \cite[Proposition 5.1]{Hi20} on regularity of the evolution operator
directly to $v(t)$, while somewhat new arguments are needed for $w(t)$
with the aid of properties of the function \eqref{duha-func}.
Concerning the latter, we still invoke
\cite[Proposition 5.1]{Hi20} to show the
following lemma.
Note that one can not assume the strong continuity in \eqref{force-cond}
below because of lack of such property for $\nabla u(t)$, see \eqref{reg1},
since we have 
$g=-P(u\cdot\nabla u+b\cdot\nabla u+u\cdot\nabla b-f)$ in mind
and that, under weak regularity as in \eqref{force-cond}, the function
\eqref{duha-func} could not be of class $C^1$ in $t$ strongly with values in $L^q_\sigma(D)$ even if the evolution
operator $T(t,s)$ were of parabolic type. 
In fact, it is not of such type 
(in the sense that, for each $t\in\mathbb R$, the operator $L_+(t)$
is not a generator of an analytic semigroup unless $\omega(t)=0$
\cite{Hi99}) and thus the situation is even harder.
On the other hand, 
we have no longer singular behavior like \eqref{force-cond-0} at the end
point of any bounded interval in $\mathbb R$ 
when we keep the same $g$ as above in mind
because of \eqref{reg1}--\eqref{reg3}. 
\begin{lemma}
Let $1<q<\infty$, $R> 2$ and $t_0\in\mathbb R$. 
Set $I=(t_0,t_0+\tau_*)$ with some $\tau_*\in (0,2]$.
Suppose that
\begin{equation}
g\in BC_w(I;\,L^q_\sigma(D)), \qquad
\|g(t)\|_q\leq k_g 
\quad (t\in I)
\label{force-cond}
\end{equation}
with some constant $k_g>0$.
Then the function $u(t)$ given by \eqref{duha-func} is of class
\begin{equation}
u\in C^1_{w}(I;\,W^{-1,q}(D_R)) 
\label{weak-diff}
\end{equation}
and obeys
\begin{equation}
\langle \partial_tu,\psi\rangle
+\langle \nabla u+u\otimes (\eta+\omega\times x), \nabla\psi\rangle
+\langle \omega\times u, \psi\rangle
-\langle p, \mbox{\rm{div} $\psi$}\rangle
=\langle g,\psi\rangle
\label{weak-inhomo}
\end{equation}
for all $\psi\in W_0^{1,q^\prime}(D_R)^3$ and $t\in I$ together with the associated pressure $p(t)$ over $D$,
which is singled out in such a way that
$\int_{D_R}p(t)\,dx=0$ and satisfies
\begin{equation}
p\in C_w(I;\,L^q(D_R)).
\label{pre-cl-ivp}
\end{equation}
Moreover, there is a constant $C=C(q,R,D)>0$ independent of $t_0\in\mathbb R$
and $\tau_*\in (0,2]$ such that 
\begin{equation}
\|\partial_tu(t)\|_{W^{-1,q}(D_R)}
+\|p(t)\|_{q,D_R}
\leq Ck_g
\label{pre-ivp}
\end{equation}
for all $t\in I$.
\label{weak-c1}
\end{lemma}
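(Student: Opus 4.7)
The plan is to construct the pressure $p(t)$ by superposing in $s$ the local pressure fields for each $v_s(t):=T(t,s)g(s)$ supplied by [Hi20, Proposition 5.1], and to read off the weak form \eqref{weak-inhomo} by differentiating $\langle u(t),\psi\rangle=\int_{t_0}^t\langle v_s(t),\psi\rangle\,ds$.

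First, for each fixed $s\in I$, $g(s)\in L^q_\sigma(D)$, so the regularity theorem [Hi20, Proposition 5.1] applied to $v_s(t)=T(t,s)g(s)$ yields a unique scalar field $\pi(\cdot,s)\in C_w((s,t_0+\tau_*);L^q(D_R))$ with $\int_{D_R}\pi(t,s)\,dx=0$, along with an appropriate pointwise pressure bound (of the form $C(t-s)^{-\alpha}\|g(s)\|_q$ with some $\alpha<1$ uniform in $t-s\leq 2$) and the weak form
\begin{equation*}
\frac{d}{dt}\langle v_s(t),\psi\rangle+\langle\nabla v_s+v_s\otimes(\eta+\omega\times x),\nabla\psi\rangle+\langle\omega\times v_s,\psi\rangle-\langle\pi(t,s),\mathrm{div}\,\psi\rangle=0
\end{equation*}
for all $\psi\in W_0^{1,q'}(D_R)^3$. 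I would then define $p(t):=\int_{t_0}^t\pi(t,s)\,ds$; the integrable pressure bound together with $\|g(s)\|_q\leq k_g$ gives $\|p(t)\|_{q,D_R}\leq Ck_g$ uniformly in $t_0$ and $\tau_*\in(0,2]$, while $\int_{D_R}p(t)\,dx=0$ is automatic. Weak continuity $p\in C_w(I;L^q(D_R))$ follows from the weak-in-$t$ continuity of each $\pi(\cdot,s)$ combined with dominated convergence against the integrable $s$-majorant.

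Next, I would differentiate the identity $\langle u(t),\psi\rangle=\int_{t_0}^t\langle v_s(t),\psi\rangle\,ds$ in $t$ for $\psi\in W_0^{1,q'}(D_R)^3$. Using $v_s(s)=g(s)$ for the boundary term and the weak form above for the integrand, and then interchanging the $s$-integration with the duality pairings (justified by the uniform bounds), I would obtain
\begin{equation*}
\frac{d}{dt}\langle u(t),\psi\rangle=\langle g(t),\psi\rangle-\langle\nabla u+u\otimes(\eta+\omega\times x),\nabla\psi\rangle-\langle\omega\times u,\psi\rangle+\langle p,\mathrm{div}\,\psi\rangle,
\end{equation*}
which is exactly \eqref{weak-inhomo}. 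Bounding each term on the right by $\|u(t)\|_q+\|\nabla u(t)\|_q\leq Ck_g$ from Lemma \ref{inhomo-0}, $\|\eta+\omega\times x\|_{\infty,D_R}\leq C(R)$, and the already-obtained bound on $p(t)$ yields the estimate \eqref{pre-ivp}.

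The main obstacle will be justifying the exchange of $d/dt$ with the $s$-integral under only weak-in-$t$ continuity of $g$, since strong differentiability of $u$ in $L^q$ cannot be expected. I would handle this by a density approximation: replace $g$ by a sequence of strongly continuous approximants (say, via time convolution with a smooth kernel supported inside $I$), carry out the superposition construction for those, and pass to the weak limit using the uniform bounds. The integrable singularity $(t-s)^{-\alpha}$ with $\alpha<1$ from [Hi20, Proposition 5.1] ensures that the $s$-integrals converge uniformly, and uniqueness of the pressure on $D_R$ with vanishing mean follows from the standard De Rham argument.
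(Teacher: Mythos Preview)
Your approach differs from the paper's in a structurally significant way. You propose to define $p(t)=\int_{t_0}^t\pi(t,s)\,ds$ by superposing the individual pressures $\pi(t,s)$ attached to each $v_s(t)=T(t,s)g(s)$ by \cite[Proposition~5.1]{Hi20}. The paper never superposes pressures: it invokes that proposition only in its pressure-free form (test functions $\psi$ taken solenoidal, see \eqref{integrand}), differentiates $\langle u(t),\psi\rangle$ to obtain \eqref{weak-inhomo-pre} for solenoidal $\psi$, integrates that identity in time, applies a De~Rham argument to produce an integrated pressure $\mathcal P(t)$, and finally differentiates $\langle\mathcal P(t),\mathrm{div}\,\psi\rangle$ in $t$ to recover $p(t)$. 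The payoff of this detour is that no regularity of $s\mapsto\pi(t,s)$ is ever needed.

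Your identified ``main obstacle'' --- differentiating under the $s$-integral --- is in fact not the delicate point, and no density approximation of $g$ is required. The paper handles the Leibniz step directly: continuity of $s\mapsto\langle v_s(t),\psi\rangle$ follows from writing $\langle T(t,s)g(s),\psi\rangle=\langle g(s),T(t,s)^*P\psi\rangle$ and using strong continuity of $s\mapsto T(t,s)^*P\psi$ in $L^{q'}$ together with the assumed weak continuity of $g$, while the bound $|\langle\partial_tv_s(t),\psi\rangle|\leq Ck_g(t-s)^{-(1+1/q)/2}$ from \cite{Hi20} supplies the integrable majorant. What your sketch actually leaves open is (i) the Bochner integrability of $s\mapsto\pi(t,s)$ in $L^q(D_R)$: with $g$ only weakly continuous and the evolution operator itself $s$-dependent, weak measurability in $s$ of the pressure is not automatic (your density workaround would patch this, but the paper's De~Rham route sidesteps the question entirely); and (ii) the extension of the pressure from $D_R$ to all of $D$, which the statement requires and which the paper carries out by a compatibility/patching argument over the family $D_{R+k}$, $k\geq 0$.
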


\begin{proof}
We fix $R> 2$ arbitrarily.
Let $t_0<s<t<t_0+\tau_*$.
Then, for each $s$, it follows from \cite[Proposition 5.1]{Hi20}
and \eqref{force-cond} that
\begin{equation}
v(\cdot,s):=T(\cdot,s)g(s)\in C^1((s,t_0+\tau_*);\,W^{-1,q}(D_R)) 
\label{integrand-c1}
\end{equation}
together with
\begin{equation}
\|\partial_tv(t,s)\|_{W^{-1,q}(D_R)}
\leq Ck_g(t-s)^{-(1+1/q)/2}, 
\qquad t\in (s,t_0+\tau_*)
\label{integrand-est}
\end{equation}
and that 
\begin{equation}
\langle \partial_tv,\psi\rangle
+\langle\nabla v+v\otimes (\eta(t)+\omega(t)\times x), \nabla\psi\rangle
+\langle \omega(t)\times v, \psi\rangle =0
\label{integrand}
\end{equation}
for all 
$\psi\in W_{0,\sigma}^{1,q^\prime}(D_R):=\{\psi\in W_0^{1,q^\prime}(D_R);\, \mbox{div $\psi$}=0\}$ 
and $t\in (s,t_0+\tau_*)$.
It should be emphasized that the constant $C=C(q,R,D)>0$
in \eqref{integrand-est} is
independent of 
$t_0\in\mathbb R$ and $\tau_*\in (0,2]$ as well as $t,\, s\in I$.
The weak form (5.21) of \cite{Hi20} involves the pressure, but that is not the case
in \eqref{integrand} since the test function $\psi$ is
in particular solenoidal.
Another remark is the relation
$\langle \omega\times v, \psi\rangle=-\langle (\omega\times x)\otimes v, \nabla\psi\rangle$,
whose right-hand side appears in \cite[(5.21)]{Hi20}, nevertheless, it is convenient for later use
to keep the Coriolis term as it is.

Let $\psi\in W^{1,q^\prime}_0(D_R)\subset W^{1,q^\prime}(D)$ (extension by setting zero outside $D_R$
so that $P\psi$ below makes sense), then
\[
(t_0,t)\ni s\mapsto \langle v(t,s), \psi\rangle
=\langle T(t,s)g(s),\psi\rangle
\]
is continuous 
on account of \eqref{force-cond}; in fact, we observe
\begin{equation*}
\begin{split}
&\quad \langle T(t,s+h)g(s+h),\psi\rangle
-\langle T(t,s)g(s),\psi\rangle  \\
&=\langle g(s+h), \big(T(t,s+h)^*-T(t,s)^*\big)P\psi\rangle
+\langle g(s+h)-g(s), T(t,s)^*P\psi\rangle \to 0
\end{split}
\end{equation*}
as $h\to 0$.
This combined with \eqref{integrand-c1} implies that
$\langle u(\cdot),\psi\rangle \in C^1(I)$ 
for every 
$\psi\in W^{1,q^\prime}_0(D_R)$
along with
\begin{equation}
\frac{d}{dt}\langle u(t),\psi\rangle
=\langle g(t),\psi\rangle
+\int_{t_0}^t \langle\partial_t v(t,s),\psi\rangle\,ds, \qquad\forall \psi\in W_0^{1,q^\prime}(D_R).
\label{diff-in-between}
\end{equation}
By \eqref{force-cond}, \eqref{integrand-est} and \eqref{diff-in-between},
there is a constant $C=C(q,R,D)>0$ such that
\[
\left|\frac{d}{dt}\langle u(t),\psi\rangle\right|
\leq C
k_g\|\nabla \psi\|_{q^\prime,D_R}
\]
for all $\psi\in W^{1,q^\prime}_0(D_R)$ and $t\in I$, from which we conclude \eqref{weak-diff}
and \eqref{pre-ivp} for $\partial_tu(t)$.  

If we take, in particular, $\psi\in W^{1,q^\prime}_{0,\sigma}(D_R)$ in
\eqref{diff-in-between}, then \eqref{integrand} implies
\begin{equation}
\langle\partial_tu, \psi\rangle
+\langle\nabla u+u\otimes (\eta+\omega\times x), \nabla\psi\rangle
+\langle \omega\times u, \psi\rangle 
=\langle g,\psi\rangle
\label{weak-inhomo-pre}
\end{equation}
for such $\psi$.
Set
\[
{\mathcal I}[u](t):=\int_{t_0}^tu(\tau)\,d\tau
\]
that is well-defined in $L^q_\sigma(D)$ 
by \eqref{duha-est1} with $r=q$ and $\alpha =0$.
Likewise, we define
\[
{\mathcal I}[u\otimes (\eta+\omega\times x)](t),\quad  
{\mathcal I}[\omega\times u](t),\quad  
{\mathcal I}[g](t)
\]
by the Bochner integrals of those functions over the interval $(t_0,t)$,
which are well-defined in 
$L^q(D_R),\;L^q(D)$ and $L^q_\sigma(D)$, respectively.
By \eqref{weak-inhomo-pre} we then deduce
\begin{equation}
\langle u,\psi\rangle+\langle\nabla {\mathcal I}[u]+{\mathcal I}[u\otimes (\eta+\omega\times x)], \nabla\psi\rangle
+\langle {\mathcal I}[\omega\times u],\psi\rangle
=\langle {\mathcal I}[g],\psi\rangle
\label{weak-auxi}
\end{equation}
for all $\psi\in W^{1,q^\prime}_{0,\sigma}(D_R)$.
Hence, there exists a distribution 
${\mathcal P}={\mathcal P}(t)$ 
such that
\begin{equation}
\langle u,\psi\rangle
+\langle \nabla {\mathcal I}[u]+{\mathcal I}[u\otimes (\eta+\omega\times x)], \nabla\psi\rangle
+\langle {\mathcal I}[\omega\times u], \psi\rangle
-\langle {\mathcal I}[g],\psi\rangle
=\langle {\mathcal P},\mbox{div $\psi$}\rangle
\label{weak-auxi2}
\end{equation}
for all 
$\psi\in C_0^\infty(D_R)$.
Since the left-hand side of \eqref{weak-auxi2} belongs to $C^1(I)$,
one can define $Q(t)$ by 
\begin{equation*}
\langle Q(t),\psi\rangle
:=\frac{d}{dt}\langle {\mathcal P}(t),\mbox{div $\psi$}\rangle
=\langle\partial_tu,\psi\rangle
+\langle\nabla u+u\otimes (\eta+\omega\times x), \nabla\psi\rangle
+\langle \omega\times u, \psi\rangle 
-\langle g,\psi\rangle  
\end{equation*}
for all $\psi\in C_0^\infty(D_R)$ and, by continuity, for 
all $\psi\in W_0^{1,q^\prime}(D_R)$
to infer that $Q(t)\in W^{-1,q}(D_R)$ with
\[
\|Q(t)\|_{W^{-1,q}(D_R)}\leq c_0k_g
\]
on account of \eqref{eta-om0}, \eqref{duha-est1}--\eqref{duha-est2} with $r=\rho=q$
and $\alpha =0$,
\eqref{force-cond} and \eqref{pre-ivp} for $\partial_tu(t)$,
where the constant $c_0=c_0(q,R,D)>0$ is independent of 
$t_0\in\mathbb R$ and $\tau_*\in (0,2]$.
If in particular $\mbox{div $\psi$}=0$, then
$\langle Q(t), \psi\rangle=0$.
Hence,
we find $p_{_R}=p_{_R}(t)\in L^q(D_R)$ subject to 
$\int_{D_R}p_{_R}(t)\,dx=0$ 
such that $Q(t)=-\nabla p_{_R}(t)$ 
together with
\[
\|p_{_R}(t)\|_{q,D_R}\leq C\|Q(t)\|_{W^{-1,q}(D_R)}
\leq Cc_0k_g
\]
for all $t\in I$, see Sohr \cite[Chapter II, Lemma 2.1.1]{So},
and that
\eqref{weak-inhomo} holds with $p=p_{_R}$ for all 
$\psi\in W^{1,q^\prime}_0(D_R)$ and $t\in I$.

Given $\phi\in L^{q^\prime}(D_R)$, there is a function
$\psi\in W_0^{1,q^\prime}(D_R)$
that satisfies
\[
\mbox{div $\psi$}=\phi-\frac{1}{|D_R|}\int_{D_R}\phi\,dx\quad\mbox{in $D_R$}.
\]
In fact, 
we may for instance take a particular solution discovered by Bogovskii \cite{B, BS, G-b}.
We then use the relation \eqref{weak-inhomo} together with
$\int_{D_R}(p_{_R}(t+h)-p_{_R}(t))\,dx=0$, \eqref{eta-om0},
\eqref{duha-est1}--\eqref{duha-est2} and \eqref{force-cond}--\eqref{weak-diff}
to observe
\[
\langle p_{_R}(t+h)-p_{_R}(t), \phi\rangle
=\langle p_{_R}(t+h)-p_{_R}(t), \mbox{div $\psi$}\rangle
\to 0\quad (h\to 0),
\]
which concludes $p_{_R}\in C_w(I; L^q(D_R))$.

It remains to construct a pressure defined over the whole $D$.
By the same procedure as above, for every integer $k>0$ one can obtain the pressure $p_{_{R+k}}$ over $D_{R+k}$
such that all the properties deduced above with $R$ replaced by $R+k$ are available.
Then it turns out from \eqref{weak-inhomo} that
\[
\langle p_{_{R+k}}(t)-p_{_{R+j}}(t), \mbox{div $\psi$}\rangle=0
\]
for every $\psi\in C_0^\infty(D_{R+j})^3$ and $k>j\geq 0$.
As a consequence, $p_{_{R+k}}-p_{_R}=c_k(t)$ with some $c_k(t)\in\mathbb R$ that is dependent only on $t$.
Let us define
\[
p(x,t):=
\left\{
\begin{array}{ll}
p_{_R}(x,t), &x\in D_R, \\
p_{_{R+k}}(x,t)-c_k(t), \qquad &x\in D_{R+k}\setminus D_{R+k-1}\quad (k=1,2,\cdots),
\end{array}
\right.
\]
which is the desired pressure over $D$. 
The proof is complete.
\end{proof}

The remaining part of Theorem \ref{main1} and Corollary \ref{per-sol} are established in the
following proposition.
\begin{proposition}
Let $u(t)$ be the solution to \eqref{NS-int} obtained in Proposition \ref{q-bdd}.
Given arbitrary $R>2$, we have
\begin{equation}
u\in C^1_{w}(\mathbb R;\,W^{-1,r}(D_R)) 
\label{weak-d}
\end{equation}
for all $r\in (1,\infty)$ as well as
\begin{equation}
\begin{split}
\langle \partial_t u,\psi\rangle
+\langle \nabla u+u\otimes (\eta+\omega\times x), 
&\nabla\psi\rangle
+\langle \omega\times u, \psi\rangle
-\langle p, \mbox{\rm{div} $\psi$}\rangle  \\
&=\langle u\otimes u+u\otimes b+b\otimes u, \nabla\psi\rangle
+\langle f,\psi\rangle
\end{split}
\label{weak-ext}
\end{equation}
for all $\psi\in W_0^{1,r^\prime}(D_R)^3$ 
and $t\in\mathbb R$ together with the associated pressure $p(t)$ over $D$,
which is singled out in such a way that
$\int_{D_R}p(t)\,dx=0$ and satisfies
\begin{equation}
p\in BC_w(\mathbb R;\, L^r(D_R))
\label{pre-cl}
\end{equation}
along with \eqref{reg4}.

If in particular $\eta$ and $\omega$ are $l$-periodic with some period $l>0$,
then the conclusion of Corollary \ref{per-sol} is true.
\label{weak-exterior}
\end{proposition}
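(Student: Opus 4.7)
The plan is to fix an arbitrary $\tau_0\in\mathbb R$, set $t_0=\tau_0-1$, and decompose $u(t)=v(t)+w(t)$ on $I_{\tau_0}=(\tau_0-1,\tau_0+1)$ according to \eqref{sol-sepa}, aiming to establish the conclusions on the smaller interval $J_{\tau_0}=(\tau_0-\frac12,\tau_0+1)$. The linear part $v$ will be handled by direct invocation of the regularity theory of the evolution operator in \cite{Hi20}, while the nonlinear part $w$ will be treated via Lemma \ref{weak-c1}. Because the resulting bounds will be uniform in $\tau_0$, the global conclusions \eqref{weak-d}, \eqref{weak-ext}, \eqref{pre-cl} and \eqref{reg4} will follow at once.

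For $v(t)=T(t,t_0)u(t_0)$, since $u(t_0)\in L^q_\sigma(D)$ for every $q\in(3,\infty)$ by \eqref{reg1}--\eqref{reg3}, Proposition 5.1 of \cite{Hi20} supplies $v\in C^1_w(J_{\tau_0};W^{-1,r}(D_R))$ together with an associated pressure $p_v\in C_w(J_{\tau_0};L^r(D_R))$ normalized by $\int_{D_R}p_v(t)\,dx=0$ satisfying \eqref{weak-inhomo} with right-hand side $g=0$. As $t-t_0\geq\frac12$ throughout $J_{\tau_0}$, the smoothing estimate is uniform and yields bounds of order $\|u(t_0)\|_{q_0}$ for a suitable $q_0\in(3,\infty)$, hence of order $\|(\eta,\omega)\|_{W^{1,\infty}}$ by \eqref{reg3}, uniformly in $\tau_0$.

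For $w(t)$, set $g(s):=-P(u\cdot\nabla u+b\cdot\nabla u+u\cdot\nabla b-f)(s)$. By H\"older's inequality combined with \eqref{reg1}--\eqref{reg3}, \eqref{lift-est} and \eqref{f-F}, we have $g\in L^\infty(I_{\tau_0};L^q_\sigma(D))$ with $\|g(t)\|_q\leq C\|(\eta,\omega)\|_{W^{1,\infty}}$ for every $q\in(3/2,\infty)$, the constant being independent of $\tau_0$. Weak continuity of $t\mapsto g(t)$ in $L^q_\sigma(D)$ follows from the strong continuity $u\in C(\mathbb R;L^q_\sigma(D))$ together with the weak continuity $\nabla u\in C_w(\mathbb R;L^q(D))$ from \eqref{reg1}. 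Lemma \ref{weak-c1} applied to $w$ with this $g$ then yields $w\in C^1_w(I_{\tau_0};W^{-1,q}(D_R))$ and a pressure $p_w\in C_w(I_{\tau_0};L^q(D_R))$ normalized by $\int_{D_R}p_w(t)\,dx=0$ satisfying \eqref{weak-inhomo}, with bounds of order $\|(\eta,\omega)\|_{W^{1,\infty}}$ uniform in $\tau_0$. The remaining range $q\in(1,3/2]$ is recovered from the continuous embeddings $W^{-1,q_1}(D_R)\hookrightarrow W^{-1,q_2}(D_R)$ and $L^{q_1}(D_R)\hookrightarrow L^{q_2}(D_R)$ for $q_2\leq q_1$ on the bounded domain $D_R$. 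Adding the equations for $v$ and $w$, the sum $u=v+w$ satisfies \eqref{weak-ext} on $J_{\tau_0}$ with pressure $p:=p_v+p_w$, which inherits the mean-zero normalization; uniqueness of the mean-zero pressure (given $u$) guarantees that the pressures produced over overlapping intervals agree, giving a single global $p$.

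For Corollary \ref{per-sol}: under the $l$-periodicity of $\eta$ and $\omega$, the quantities $b$, $f$ and $F$ are $l$-periodic, and \eqref{per-evo-op} gives $T(t+l,s+l)=T(t,s)$. A change of variables therefore shows that $u(\cdot+l)$ again solves \eqref{NS-int} with \eqref{sol-cl}, and the uniqueness statement in Theorem \ref{main1}, which holds within $L^\infty(\mathbb R;L^{3,\infty}_\sigma(D))$ of small norm, forces $u(t+l)=u(t)$. Consequently $p(t+l)$ satisfies the same weak form \eqref{weak-ext} driven by the unchanged $u$, and since the mean-zero normalization on $D_R$ determines the pressure uniquely, $p(t+l)=p(t)$. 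The main technical obstacle is verifying the weak-continuity hypothesis of Lemma \ref{weak-c1} for the nonlinear source $g$ under the limited regularity \eqref{reg1}--\eqref{reg3}, and ensuring the bounds from both $v$ and $w$ are genuinely uniform in $\tau_0$, which is what allows the local-in-time constructions to piece together into the global conclusions \eqref{weak-d}--\eqref{pre-cl}.
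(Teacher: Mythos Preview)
Your proposal is correct and follows essentially the same route as the paper: fix $\tau_0$, split $u=v+w$ via \eqref{sol-sepa}, invoke \cite[Proposition 5.1]{Hi20} for the linear piece $v$ and Lemma \ref{weak-c1} for the Duhamel piece $w$ (after verifying the weak continuity of $g$ from \eqref{reg1}), glue the mean-zero pressures across overlapping intervals, and deduce periodicity from \eqref{per-evo-op} together with uniqueness in the small $L^{3,\infty}$ class. The only cosmetic difference is that the paper restricts to $r\in(3,\infty)$ from the outset and then observes that the bounded-domain embeddings $L^{r_1}(D_R)\hookrightarrow L^{r_2}(D_R)$, $W^{-1,r_1}(D_R)\hookrightarrow W^{-1,r_2}(D_R)$ for $r_2\le r_1$ recover the full range $r\in(1,\infty)$, whereas you state this embedding step only for $w$; the same remark should be made for $v$ as well, since \cite[Proposition 5.1]{Hi20} delivers the estimate \eqref{v-2} with the exponent of the initial datum.
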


\begin{proof}
Let us fix $R> 2$ arbitrarily.
For the first falf (before periodic issue)
it suffices to show the conclusion 
for $r\in (3,\infty)$.
We also fix such exponent $r$ in what follows.
Following what is described at the beginning of this subsection,
we take $\tau_0\in\mathbb R$, set
$I_{\tau_0}=(t_0,\tau_0+1)$ with $t_0=\tau_0-1$
and begin to discuss $v(t)$ given by \eqref{sol-sepa}.
According to \cite[Proposition 5.1]{Hi20}, there is a function $p_v(t)=p_{v,\tau_0}(t)\in L^r_{\rm loc}(\overline{D})$
such that the pair $\{v,p_v\}$ satisfies
\begin{equation}
v\in C^1(I_{\tau_0};\,W^{-1,r}(D_R))
\label{v-1}
\end{equation}
\begin{equation}
p_v\in C(I_{\tau_0};\,L^r(D_R)), \qquad \int_{D_R}p_v(t)\,dx=0
\label{p-v}
\end{equation}
\begin{equation}
\|\partial_t v(t)\|_{W^{-1,r}(D_R)}+\|p_v(t)\|_{r,D_R}
\leq C(t-t_0)^{-(1+1/r)/2}\|u(t_0)\|_r  
\label{v-2}
\end{equation}
\begin{equation}
\langle\partial_tv,\psi\rangle
+\langle\nabla v+v\otimes (\eta+\omega\times x), \nabla\psi\rangle
+\langle \omega\times v, \psi\rangle
-\langle p_v,\mbox{div $\psi$}\rangle=0
\label{v-3}
\end{equation}
for all $\psi\in W^{1,r^\prime}_0(D_R)$ and 
$t\in I_{\tau_0}=(t_0,\tau_0+1)$ with $t_0=\tau_0-1$,
where the constant $C=C(r,R,D)>0$ in \eqref{v-2} is independent of $\tau_0$
and the continuity \eqref{p-v} is not given in the statement
of Proposition 5.1 of \cite{Hi20} but found at the very end of its proof
in that paper.
We note that \eqref{v-1}--\eqref{p-v} and \eqref{v-3}
actually hold on the interval $(t_0,\infty)$.

We turn to $w(t)$ given by \eqref{sol-sepa}.
By \eqref{lift-est}, \eqref{f-F} and \eqref{reg1}--\eqref{reg3},
the function
\[
g:=-P(u\cdot\nabla u+b\cdot\nabla u+u\cdot\nabla b-f)
\]
satisfies \eqref{force-cond} with $q=r\in (3,\infty)$ and
\begin{equation*}
\begin{split}
k_g&=C\sup_{t\in I}\left(\|u(t)\|_{2r}^2+\|\nabla u(t)\|_{2r}^2\right)
+C\|(\eta,\omega)\|_{W^{1,\infty}}^2+C\|(\eta,\omega)\|_{W^{1,\infty}}  \\
&\leq C\|(\eta,\omega)\|_{W^{1,\infty}}
\end{split}
\end{equation*}
for every bounded interval $I\subset \mathbb R$.
One can thus apply Lemma \ref{weak-c1} to $w(t)$
on the interval $I=I_{\tau_0}=(t_0, \tau_0+1)$ with $t_0=\tau_0-1$.
We then find a function $p_w(t)=p_{w,\tau_0}(t)\in L^r_{\rm loc}(\overline{D})$ such that the pair $\{w,p_w\}$ satisfies
\begin{equation}
w\in C^1_{w}(I_{\tau_0};\, W^{-1,r}(D_R))
\label{w-1}
\end{equation}
\begin{equation}
p_w\in C_w(I_{\tau_0};\,L^r(D_R)), \qquad \int_{D_R}p_w(t)\,dx=0
\label{p-w}
\end{equation}
\begin{equation}
\|\partial_tw(t)\|_{W^{-1,r}(D_R)} 
+\|p_w(t)\|_{r,D_R}\leq 
C\|(\eta,\omega)\|_{W^{1,\infty}}
\label{w-0}
\end{equation}
\begin{equation}
\begin{split}
\langle\partial_tw,\psi\rangle
+\langle\nabla w+w\otimes (\eta+\omega\times x),
&\nabla\psi\rangle
+\langle \omega\times w, \psi\rangle
-\langle p_w,\mbox{div $\psi$}\rangle  \\
&=\langle u\otimes u+u\otimes b+b\otimes u, \nabla\psi\rangle+\langle f,\psi\rangle
\end{split}
\label{w-2}
\end{equation}
for all $\psi\in W^{1,r^\prime}_0(D_R)$ and
$t\in I_{\tau_0}=(t_0,\tau_0+1)$ with $t_0=\tau_0-1$,
where the constant $C=C(r,R,D)>0$ in \eqref{w-0} is independent of $\tau_0$.

By \eqref{v-1} and \eqref{w-1} for every $\tau_0\in \mathbb R$ we immediately see that $u=v+w$
enjoys \eqref{weak-d}.
Since the associated pressure $p^{\tau_0}:=p_{v,\tau_0}+p_{w,\tau_0}$
is available 
solely on each interval $I_{\tau_0}$ and \eqref{weak-ext} is satisfied for $\{u,p^{\tau_0}\}$
only there,
we need to construct a pressure globally
on the whole time axis $\mathbb R$.
Once we have that, it follows from \eqref{v-3} and \eqref{w-2} that
\eqref{weak-ext} holds true for every $t\in\mathbb R$.

If $I_{\tau_0}\cap I_{\tau_0^\prime}\neq \emptyset$, then we use \eqref{weak-ext} locally to see that
$\nabla (p^{\tau_0}(t)-p^{\tau_0^\prime}(t))=0$ for $t\in I_{\tau_0}\cap I_{\tau_0^\prime}$, which together with
$\int_{D_R}(p^{\tau_0}(t)-p^{\tau_0^\prime}(t))\,dx=0$ implies that both pressures coincide with each other
in the intersection of those intervals.
In this way, the pressure, denoted by $p(t)$, is well-defined on the whole line $\mathbb R$.
By \eqref{p-v} and \eqref{p-w}, we find $p\in C_w(\mathbb R;\, L^r(D_R))$.
By \eqref{v-2} especially on the interval
$J_{\tau_0}=(\tau_0-\frac{1}{2},\tau_0+1)\subset I_{\tau_0}$, \eqref{reg3} and
\eqref{w-0}, there is a constant $C=C(r,R,D)>0$ independent of $\tau_0\in\mathbb R$ such that
\[
\|\partial_tu(t)\|_{W^{-1,r}(D_R)}
+\|p(t)\|_{r,D_R}
\leq C\|(\eta,\omega)\|_{W^{1,\infty}} \qquad   
(t\in J_{\tau_0}).
\]
Since $\tau_0$ is arbitrary, we conclude 
$p\in BC_w(\mathbb R;\, L^r(D_R))$ together with \eqref{reg4}.

Suppose, in particular, that $\eta$ and $\omega$ are $l$-periodic, then we have \eqref{per-evo-op}.
This together with the periodicity of $b$ and $f$ implies that $u(t+l)$ enjoys the same equation \eqref{NS-int}
within the same class \eqref{sol-cl}.
By uniqueness of solutions in
Theorem \ref{main1}, see Remark \ref{sol-bdd} (i),
we conclude $u(t+l)=u(t)$ for every $t\in\mathbb R$.
Furthermore,
the relation \eqref{weak-ext} for all $t\in\mathbb R$ shows that
the associated pressure $p(t)$ constructed above 
satisfies $\nabla (p(t+l)-p(t))=0$, which along with
$\int_{D_R}(p(t+l)-p(t))\,dx=0$ leads to $p(t+l)=p(t)$ for every $t\in\mathbb R$.
The proof is complete.
\end{proof}

\section{Proof of Theorem \ref{main3}}
\label{pr3}

\noindent
{\bf 4.1.
Reduction to the whole space problem}
\medskip

As decribed in section \ref{intro}, we carry out a cut-off procedure to reduce
our consideration to the whole space problem.
We take a cut-off function $\varphi$ as in \eqref{cut} and then use the 
Bogovskii operator
$\mathbb B$ in the bounded domain
$B_2\setminus \overline{B_1}$,
see \cite{B, BS, G-b, GHH} (brief descriptions given in \cite[p.220]{Hi20} are enough for our purpose),
to recover the solenoidal condition.
Let $u(t)$ be the solution obtained in Theorem \ref{main1} and
set
\begin{equation}
v(t)=(1-\varphi)u(t)+u_c(t), \quad u_c(t)=\mathbb B[u(t)\cdot\nabla\varphi].
\label{modi0}
\end{equation}
Note that
$\int_{B_2\setminus \overline{B_1}}u\cdot\nabla\varphi\,dx=\int_{\partial D}\nu\cdot u\,d\sigma=0$,
yielding $\mbox{div $v$}=0$,
and that $u_c(t)\in W^{2,q}_0(B_2\setminus \overline{B_1})\subset W^{2,q}(\mathbb R^3)$
for $q\in (3,\infty)$, where $u_c(t)$
is understood as its extension by setting zero outside $B_2\setminus\overline{B_1}$.
It follows from optimal regularity properties (and consequences from them by interpolation)
of the Bogovskii operator found in the literature above
together with \eqref{sol-cl}--\eqref{reg4}
that
\begin{equation}
\begin{split}
&u_c\in BC_{w^*}(\mathbb R;\,L^{3,\infty}(\mathbb R^3))\cap BC(\mathbb R;\, W^{1,q}(\mathbb R^3)) \\
&\sup_{t\in\mathbb R}\|u_c(t)\|_{3,\infty,\mathbb R^3}\leq C\|(\eta,\omega)\|_{W^{1,\infty}}
\end{split}
\label{correction0}
\end{equation}
\begin{equation}
\begin{split}
&u_c\in C^1_w(\mathbb R;\,L^r(B_R)), \quad \partial_tu_c(t)=\mathbb B[\partial_tu(t)\cdot\nabla \varphi] \\
&\|\partial_tu_c(t)\|_{r,B_R}\leq C^\prime\|(\eta,\omega)\|_{W^{1,\infty}}  
\end{split}
\label{correction}
\end{equation}
for all $q\in (3,\infty)$, $r\in (1,\infty)$ and $R> 2$ with some
constants $C=C(D)>0$ and $C^\prime=C^\prime(r,R,D)>0$.
We only show the weak differentiability in \eqref{correction} since the others are observed easily.
The adjoint operator 
$\mathbb B^*: L^{r^\prime}(B_2\setminus \overline{B_1})^3\to W^{1,r^\prime}(B_2\setminus\overline{B_1})$ is bounded,
from which together with \eqref{reg2} it follows that
\begin{equation*}
\begin{split}
&\quad \left\langle\frac{u_c(t+h)-u_c(t)}{h}-\mathbb B[\partial_tu(t)\cdot\nabla\varphi],\;\psi\right\rangle  \\
&=\left\langle\frac{u(t+h)-u(t)}{h}-\partial_tu(t),\;(\nabla\varphi)\mathbb B^*\psi_0
\right\rangle
\to 0 \qquad (h\to 0)
\end{split}
\end{equation*}
for every $\psi\in L^{r^\prime}(B_R)^3$, where $\psi_0:=\psi|_{B_2\setminus\overline{B_1}}$.
Let us collect several properties of $v(t)$ in
the following proposition.
\begin{proposition}
Let $u(t)$ be the solution obtained in Theorem \ref{main1}
and $v(t)$ the function given by \eqref{modi0}.
Then there is a constant $C=C(D)>0$ such that
\begin{equation}
v\in BC_{w^*}(\mathbb R;\,L^{3,\infty}_\sigma(\mathbb R^3)), \quad
\sup_{t\in\mathbb R} \|v(t)\|_{3,\infty,\mathbb R^3}\leq C\|(\eta,\omega)\|_{W^{1,\infty}}.
\label{v-cl1}
\end{equation}
Moreover, we have
\begin{equation}
v\in BC(\mathbb R;\, L^q_\sigma(\mathbb R^3))
\cap C^1_{w}(\mathbb R;\,W^{-1,r}(B_R)) 
\label{v-cl2}
\end{equation}
\begin{equation}
\nabla v\in BC_w(\mathbb R;\, L^q(\mathbb R^3))
\label{v-cl3}
\end{equation}
for all $q\in (3,\infty)$, $r\in (1,\infty)$ and $R>2$
together with
\begin{equation}
\frac{d}{dt}\langle v,\psi\rangle
+\langle \nabla v+v\otimes (\eta+\omega\times x), \nabla\psi\rangle 
+\langle \omega\times v, \psi\rangle
=\langle v\otimes v,\nabla\psi\rangle+\langle g,\psi\rangle
\label{weak-wh}
\end{equation}
for all $\psi\in C^\infty_{0,\sigma}(\mathbb R^3)$ and $t\in\mathbb R$, where $g$ is given by
\begin{equation}
\begin{split}
g&=(1-\varphi)(f-u\cdot\nabla b-b\cdot\nabla u)
+2\nabla\varphi\cdot\nabla u+[\Delta\varphi+(\eta+\omega\times x)\cdot\nabla\varphi]\,u  \\
&\quad -(\nabla\varphi)p
+\partial_tu_c-\Delta u_c-(\eta+\omega\times x)\cdot\nabla u_c+\omega\times u_c  \\
&\quad -(u\cdot\nabla\varphi)u
-\mbox{\rm{div}$\big[\varphi(1-\varphi)u\otimes u-(1-\varphi)(u\otimes u_c+u_c\otimes u)-u_c\otimes u_c\big]$}
\end{split}
\label{g}
\end{equation}
with $p(t)$ being the pressure associated with $u(t)$.
\label{weak-whole}
\end{proposition}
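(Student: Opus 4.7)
The plan has three stages. First, the regularity claims \eqref{v-cl1}--\eqref{v-cl3} follow directly from the decomposition $v=(1-\varphi)u+u_c$ by combining the properties of $u$ collected in Theorem \ref{main1} with those of $u_c$ given in \eqref{correction0}--\eqref{correction}. The solenoidality of $v$ on $\mathbb R^3$ is a direct computation: using $\mathrm{div}\,u=0$ in $D$ and the extension of $(1-\varphi)u$ by zero (which is consistent since $\varphi\equiv 1$ on $B_1\supset\mathbb R^3\setminus D$), one has $\mathrm{div}\,[(1-\varphi)u]=-u\cdot\nabla\varphi$, while the defining property of the Bogovskii operator yields $\mathrm{div}\,u_c=u\cdot\nabla\varphi$ (the compatibility condition $\int_{B_2\setminus\overline{B_1}}u\cdot\nabla\varphi\,dx=\int_{\partial D}\nu\cdot u\,d\sigma=0$ is automatic because $u|_{\partial D}=0$).

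Second, to derive \eqref{weak-wh}, I fix $\psi\in C^\infty_{0,\sigma}(\mathbb R^3)$ supported in some $B_R$ with $R>2$ and decompose
\[
\langle v(t),\psi\rangle_{\mathbb R^3}=\langle u(t),(1-\varphi)\psi\rangle_D+\langle u_c(t),\psi\rangle_{\mathbb R^3}.
\]
The test function $(1-\varphi)\psi$ vanishes in a neighborhood of $\partial D$ and thus belongs to $W^{1,r'}_0(D_R)^3$ (even though it is no longer solenoidal, since $\mathrm{div}\,[(1-\varphi)\psi]=-\nabla\varphi\cdot\psi$); hence the weak form \eqref{weak-ext} of Proposition \ref{weak-exterior} can be applied with this test function. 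Differentiating the decomposition in $t$ and inserting \eqref{weak-ext} together with the identity $\partial_t u_c=\mathbb B[\partial_t u\cdot\nabla\varphi]$ from \eqref{correction} (justified, as in the excerpt, by duality of $\mathbb B^*$), one obtains an expression featuring $(1-\varphi)$ multiplied termwise by the components of $u$'s equation, plus the contribution of $u_c$.

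Third, the goal is to recognize, on the left-hand side of the resulting identity, exactly the structure $\langle\nabla v+v\otimes(\eta+\omega\times x),\nabla\psi\rangle+\langle\omega\times v,\psi\rangle-\langle v\otimes v,\nabla\psi\rangle$, and to collect every other contribution as $\langle g,\psi\rangle$. This is a bookkeeping computation based on four ingredients: (a) the product rule commutators $[\Delta,(1-\varphi)]$ and $[(\eta+\omega\times x)\cdot\nabla,(1-\varphi)]$ applied to $u$, which yield precisely $2\nabla\varphi\cdot\nabla u+[\Delta\varphi+(\eta+\omega\times x)\cdot\nabla\varphi]u$; (b) the pressure coupling, which becomes $-\langle p,\mathrm{div}\,[(1-\varphi)\psi]\rangle=\langle p\,\nabla\varphi,\psi\rangle$ because $\psi$ is solenoidal, producing the term $-(\nabla\varphi)p$ in $g$; (c) the identity $v\otimes v=(1-\varphi)^2u\otimes u+(1-\varphi)(u\otimes u_c+u_c\otimes u)+u_c\otimes u_c$ compared with the $(1-\varphi)\,u\otimes u$ arising from $u$'s equation, which accounts for the $\mathrm{div}\,[\varphi(1-\varphi)u\otimes u-(1-\varphi)(u\otimes u_c+u_c\otimes u)-u_c\otimes u_c]$ term and the nonlinear commutator $-(u\cdot\nabla\varphi)u$; and (d) adding and subtracting the linear operators applied to $u_c$ itself, which produces the block $\partial_t u_c-\Delta u_c-(\eta+\omega\times x)\cdot\nabla u_c+\omega\times u_c$ in $g$. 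The terms involving $b$ and $f$ are carried through the multiplication by $(1-\varphi)$ unchanged, yielding $(1-\varphi)(f-u\cdot\nabla b-b\cdot\nabla u)$.

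The main obstacle is not conceptual but the careful tracking of every commutator and sign, and the verification that each summand in \eqref{g} is a well-defined distribution in the function spaces at hand: the compact support of $\nabla\varphi$ localizes every dangerous term to $B_2\setminus B_1\subset D_R$ where $u$, $\nabla u$, $p$ and $\partial_t u$ all have the regularity provided by \eqref{reg1}--\eqref{reg4}, while the nonlinearities are tamed by $u\in BC(\mathbb R;L^q_\sigma(D))$ for $q\in(3,\infty)$. A secondary issue is to confirm that $t\mapsto\langle v(t),\psi\rangle$ is genuinely $C^1$; this is inherited from the weak-$C^1$ properties \eqref{reg2} for $u$ and \eqref{correction} for $u_c$.
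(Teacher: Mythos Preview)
Your proposal is correct and follows essentially the same approach as the paper: the regularity \eqref{v-cl1}--\eqref{v-cl3} is read off from \eqref{sol-cl}--\eqref{reg4} and \eqref{correction0}--\eqref{correction}, and the weak form \eqref{weak-wh} is obtained by inserting $(1-\varphi)\psi$ as test function in \eqref{weak-ext} and performing the commutator bookkeeping you describe. The paper's proof is terser (it simply invokes ``elementary computations'' and additionally remarks that $\langle g,\psi\rangle$ is independent of the pressure normalization because $\langle\nabla\varphi,\psi\rangle=0$), but your detailed breakdown of points (a)--(d) accurately reconstructs exactly what those computations are.
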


\begin{proof}
Before the proof, it should be noted that the term $\langle g,\psi\rangle$ in \eqref{weak-wh}
does not change, no matter which pressure we may choose (up to functions dependent only on $t$).
This is because $\langle\nabla\varphi, \psi\rangle=0$ follows from $\mbox{div $\psi$}=0$.

By \eqref{sol-cl}--\eqref{reg4} together with 
\eqref{correction0}--\eqref{correction}
we observe \eqref{v-cl1}--\eqref{v-cl3}.
Now, given $\psi\in C^\infty_{0,\sigma}(\mathbb R^3)$, we fix $R> 2$ such that 
the support of $\psi$ in contained in $B_R$ and then single out the pressure $p(t)$ satisfying 
$\int_{D_R}p(t)\,dx=0$.
Let us take $(1-\varphi)\psi$ as the test function
in \eqref{weak-ext} 
to arrive at 
\eqref{weak-wh} by 
elementary computations.
\end{proof}

We fix $R> 2$ and take the pressure $p(t)$ in such a way that
$\int_{D_R}p(t)\,dx=0$.
We need the following properties of $g(t)$.
\begin{lemma}
Let $u(t)$ be the solution obtained in Theorem \ref{main1}.
Then the function $g(t)$ given by \eqref{g} satisfies
\begin{equation}
\begin{split}
&g\in L^\infty(\mathbb R;\, L^r(\mathbb R^3)), \qquad
g(x,t)=0\;\;\mbox{\rm{a.e.}$(\mathbb R^3\setminus B_2)\times \mathbb R$}, \\
&\|g\|_{L^\infty(\mathbb R;\, L^r(\mathbb R^3))}
\leq C\|(\eta,\omega)\|_{W^{1,\infty}}  \\
\end{split}
\label{v-g-est}
\end{equation}
for all $r\in [1,\infty)$ with some constant $C=C(r,D)>0$.
There is also a function $G\in L^\infty(\mathbb R;\, L^{3/2,\infty}(\mathbb R^3))$ such that
\begin{equation}
g=\mbox{\rm{div} $G$}, \qquad
\|G\|_{L^\infty(\mathbb R;\, L^{3/2,\infty}(\mathbb R^3))}
\leq C\|(\eta,\omega)\|_{W^{1,\infty}}
\label{G-est}
\end{equation}
with some constant $C=C(D)>0$.
\label{v-g}
\end{lemma}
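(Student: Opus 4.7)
The plan is to analyze the long expression \eqref{g} for $g$ term by term, verify the claimed integrability, and then construct $G$ from $g$ via a Newton-potential representation. A unifying observation is that every factor stemming from the cut-off and the rigid motion---$\varphi$, $1-\varphi$, $\nabla\varphi$, $\Delta\varphi$, $b$, $f$---is compactly supported in $B_2$, and that by construction $u_c$ (together with $\partial_t u_c$, $\nabla u_c$, $\Delta u_c$) is supported in $\overline{B_2\setminus B_1}$. Hence every term of $g$ is supported in $B_2$, which reduces every $L^r$ estimate to an $L^q$ estimate on a bounded set for one sufficiently large $q$ and then upgrades it via H\"older's inequality.

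For \eqref{v-g-est} I would combine \eqref{sol-cl}--\eqref{reg4}, \eqref{f-F}, \eqref{lift-est}, and \eqref{correction0}--\eqref{correction}: the quantities $u,\nabla u$ lie in $L^q$ for every $q\in(3,\infty)$, the pressure $p$ lies in $L^q(D_R)$ for every $q\in(1,\infty)$, and $u_c,\nabla u_c,\Delta u_c,\partial_t u_c$ lie in the corresponding spaces, each uniformly in $t$ with norm controlled by $\|(\eta,\omega)\|_{W^{1,\infty}}$. For the linear terms this directly places each summand in $L^q$ for some $q>3$; for the divergence term I expand using $\text{div }u=0$ and the Bogovskii identity $\text{div }u_c=u\cdot\nabla\varphi$, producing pointwise products of $u,\nabla u,u_c,\nabla u_c,\nabla\varphi$ lying in $L^{q/2}$ for every $q\in(3,\infty)$. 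The quadratic contributions are absorbed into a linear bound via the standing assumption $\|(\eta,\omega)\|_{W^{1,\infty}}\leq 1$. Bounded support together with H\"older's inequality then upgrades $L^q$-membership for arbitrarily large $q$ to membership in $L^r(\mathbb{R}^3)$ for every $r\in[1,\infty)$, uniformly in $t$.

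For the divergence representation \eqref{G-est} I would define, componentwise,
\[
G_{ij}(x,t) := -\bigl((\partial_j\Gamma)*g_i(\cdot,t)\bigr)(x),\qquad \Gamma(x):=\frac{1}{4\pi|x|},
\]
so that $-\Delta\Gamma=\delta_0$ yields $\sum_j\partial_jG_{ij}=g_i$ distributionally, i.e.\ $\text{div }G=g$. The kernel satisfies $|\partial_j\Gamma(x)|\leq C|x|^{-2}$, hence $\partial_j\Gamma\in L^{3/2,\infty}(\mathbb{R}^3)$. Since \eqref{v-g-est} gives $g(\cdot,t)\in L^1(\mathbb{R}^3)$ uniformly in $t$, O'Neil's weak Young convolution inequality $L^1*L^{3/2,\infty}\hookrightarrow L^{3/2,\infty}$ yields
\[
\|G(t)\|_{3/2,\infty,\mathbb{R}^3}\leq C\|\partial_j\Gamma\|_{3/2,\infty,\mathbb{R}^3}\|g(t)\|_{1,\mathbb{R}^3}\leq C\|(\eta,\omega)\|_{W^{1,\infty}},
\]
uniformly in $t$, which is the desired $L^\infty(\mathbb{R};L^{3/2,\infty}(\mathbb{R}^3))$-bound.

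The genuinely delicate point in this scheme is the pressure contribution $-(\nabla\varphi)p$: it relies on the merely \emph{local} pressure bound \eqref{reg4}, but compactness of $\mathrm{supp}(\nabla\varphi)\subset D_R$ rescues it. Beyond that, everything is careful bookkeeping based on compact support. A conceptually relevant remark is that the representative $G$ constructed above is \emph{not} compactly supported: it decays only like $|x|^{-2}$ at infinity, so the weak space $L^{3/2,\infty}$, rather than $L^{3/2}$, is the sharp integrability scale, which is exactly what is needed later when feeding $G$ into the $L^{3,\infty}$-based linear estimates for the whole space problem.
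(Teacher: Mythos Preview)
Your argument is correct and follows the paper's approach closely. One exposition slip: $1-\varphi$ is \emph{not} compactly supported in $B_2$ (it equals $1$ outside $B_2$); the terms in \eqref{g} involving $1-\varphi$ are nonetheless supported in $B_2$ because the \emph{other} factors---$f$, $b$, $u_c$, or the product $\varphi(1-\varphi)$---are. Your conclusion that every summand of $g$ vanishes outside $B_2$ therefore stands.

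For \eqref{G-est} the paper takes a marginally more economical route: it writes $g=g_0+\mbox{div }G_1$, where $G_1$ is the explicit tensor already sitting inside the divergence in the last line of \eqref{g}, estimates $G_1$ directly in $L^{3/2,\infty}$ from $u,\,u_c\in L^{3,\infty}$ (via \eqref{sol-cl} and \eqref{correction0}), and applies the Newton potential only to the compactly supported remainder $g_0$. Your approach---feeding \emph{all} of $g$ through the Newton potential using its $L^1$ bound from \eqref{v-g-est}---is equally valid and arguably more uniform, at the price of not exploiting the explicit $G_1$ already handed to you by \eqref{g}.
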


\begin{proof}
All the assertions in \eqref{v-g-est} 
readily follow from \eqref{eta-om0}, \eqref{lift-est}, \eqref{f-F}, 
\eqref{reg3}, \eqref{reg4} and \eqref{correction0}--\eqref{correction}
together with estimates of the Bogovskii operator.  
The function $g$ is of the form $g=g_0+\mbox{div $G_1$}$ in \eqref{g}, where
the desired estimate of $G_1$ is implied by \eqref{sol-cl} and \eqref{correction0}.
Furthermore, we
have $g_0=\mbox{div $G_0$}$ with $G_0(t)=-\nabla(4\pi |x|)^{-1}*g_0(t)\in L^{3/2,\infty}(\mathbb R^3)$,
which also enjoys the desired estimate
on account of
$\|g_0\|_{L^\infty(\mathbb R;\, L^1(\mathbb R^3))}
\leq C\|(\eta,\omega)\|_{W^{1,\infty}}$ and
$\|G_0(t)\|_{3/2,\infty,\mathbb R^3}\leq C\|g_0(t)\|_{1,\mathbb R^3}$.
This completes the proof of \eqref{G-est}.
\end{proof}
\medskip

\noindent
{\bf 4.2.
Integral equation for the whole space problem}
\medskip

In order to analyze the spatial behavior at infinity of $v(t)$ given by \eqref{modi0},
the only clue would be the explicit representation of the Duhamel term by use of the evolution operator 
$\{U(t,s);\; -\infty<s\leq t<\infty\}$,
see \eqref{formula}--\eqref{funda-sol} in the next subsection.
The evolution operator $U(t,s)$ 
provides a solution to the initial value problem for the linearized system
\eqref{linearized} in the whole space $\mathbb R^3\times (s,\infty)$.
See \cite{CM, GHa, Ha, HR11, HR} 
and \cite[Subsection 3.2]{Hi18}, \cite[Section 3]{Hi20} for the details.
The same $L^q$-$L^r$ estimates for all $t,\,s\in\mathbb R^3$ with $t>s$ as in \eqref{evo1}--\eqref{evo5}
hold true concerning $U(t,s)$ as well;
in fact, one needs neither the restriction $r\leq 3$ nor $q\geq 3/2$ even for decay estimates
of $\nabla U(t,s)$ and $U(t,s)P_{\mathbb R^3}\,\mbox{div}$, respectively.

If we formally consider the weak form \eqref{weak-wh} with $\psi$ replaced by $U(t,s)^*\psi$,
then we are led to the integral equation, that $v(t)$ obeys, in terms of $U(t,s)$, where $U(t,s)^*$
stands for the adjoint evolution operator that provides a solution to the backward problem
for the adjoint system subject to the final condition at $t$, see \cite[Lemma 3.1]{Hi18}; that is,
as in \eqref{back},
\begin{equation}
-\partial_sU(t,s)^*\psi+L_{\mathbb R^3}(s)^*U(t,s)^*\psi=0, \quad s\in (-\infty,t);\qquad
U(t,t)\psi=\psi
\label{back-wh}
\end{equation}
with $L_{\mathbb R^3}(t)$ being the generator of $U(t,s)$, which is defined as in \eqref{L}
with obvious change, and $L_{\mathbb R^3}(t)^*$ being its adjoint,
see \eqref{L-adj} for the exterior problem and the same thing holds true for the whole space problem, too. 
The justification of the procedure above is, however, by no means obvious.
In fact, look at the term $\langle v\otimes (\omega\times x), \nabla\psi\rangle$ in \eqref{weak-wh},
then we see that the class of test functions does not extend to the 
completion of $C_{0,\sigma}^\infty(\mathbb R^3)$ in a standard Sobolev space
of first order because we have no information about $v\otimes (\omega\times x)$. 
To overcome this difficulty, we use the regularity property $U(t,s)^*\psi\in Z_q(\mathbb R^3)$
for all $\psi\in C_{0,\sigma}^\infty(\mathbb R^3)$ and $q\in (1,\infty)$
(this property is enough for our aim although we know even more, see \cite[Lemma 3.1, assertion 3]{Hi18},
\cite[Lemma 3.1, assertion 3]{Hi20}), 
where
\[
Z_q(\mathbb R^3)
:=\{u\in L^q_\sigma(\mathbb R^3)\cap W^{1,q}(\mathbb R^3);\; |x|\nabla u\in L^q(\mathbb R^3)\}
\]
is a Banach space endowed with norm
\[
\|u\|_{Z_q(\mathbb R^3)}:=\|u\|_{q,\mathbb R^3}+\|\nabla u\|_{q,\mathbb R^3}+\big\||x|\nabla u\big\|_{q,\mathbb R^3}.
\]
The following density property is thus needed.
To our knowledge, 
it is not found in the existing literature and might be useful in some other studies. 
\begin{lemma}
Let $1<q<\infty$.
Then $C^\infty_{0,\sigma}(\mathbb R^3)$ is dense in $Z_q(\mathbb R^3)$.
\label{density}
\end{lemma}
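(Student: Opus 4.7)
The plan is to carry out the standard two-step density argument, with extra care devoted to the weighted term $\||x|\nabla u\|_q$. First I would approximate a given $u\in Z_q(\mathbb R^3)$ by a sequence of compactly supported solenoidal functions belonging to $W^{1,q}(\mathbb R^3)$; then mollify each to land in $C^\infty_{0,\sigma}(\mathbb R^3)$. The convergence in the three constituent norms of $Z_q(\mathbb R^3)$ will be verified simultaneously.

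\textbf{Step 1 (cut-off plus Bogovskii correction).} Fix $\chi\in C_0^\infty(\mathbb R^3)$ with $\chi=1$ on $B_1$ and $\mbox{supp }\chi\subset B_2$, and set $\chi_R(x):=\chi(x/R)$. Then $|\nabla\chi_R|\le C/R$ is supported in the annulus $A_R:=B_{2R}\setminus\overline{B_R}$. Since $\mbox{div $u$}=0$, we have $\int_{A_R}u\cdot\nabla\chi_R\,dx=0$, so the Bogovskii operator $\mathbb B_R$ on $A_R$ yields $w_R\in W^{1,q}_0(A_R)$ with $\mbox{div }w_R=u\cdot\nabla\chi_R$. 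By rescaling from the fixed annulus $A:=B_2\setminus\overline{B_1}$ (sending $f(x)\mapsto f(Ry)$ and $w(x)\mapsto R\,\widetilde w(x/R)$), one obtains, with a constant $C$ independent of $R$,
\[
\|\nabla w_R\|_{L^q(A_R)}\le C\|u\cdot\nabla\chi_R\|_{L^q(A_R)}\le\frac{C}{R}\|u\|_{L^q(A_R)},\qquad
\|w_R\|_{L^q(A_R)}\le CR\|u\cdot\nabla\chi_R\|_{L^q(A_R)}\le C\|u\|_{L^q(A_R)}.
\]
Setting $u_R:=\chi_R u-w_R$ (extended by zero), we get a compactly supported solenoidal element of $W^{1,q}(\mathbb R^3)$. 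The three pieces of $\|u-u_R\|_{Z_q}$ are
\[
\|(1-\chi_R)u\|_q+\|w_R\|_q,\quad \|(1-\chi_R)\nabla u\|_q+\|u\otimes\nabla\chi_R\|_q+\|\nabla w_R\|_q,\quad \||x|(1-\chi_R)\nabla u\|_q+\||x|u\otimes\nabla\chi_R\|_q+\||x|\nabla w_R\|_q.
\]
Using $|x||\nabla\chi_R|\le C$ on $A_R$ and $|x|\le 2R$ there, every term is bounded by either $\|u\|_{L^q(A_R)}$, $\|\nabla u\|_{L^q(A_R)}$, or $\||x|\nabla u\|_{L^q(A_R)}$ times a constant independent of $R$. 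By absolute continuity of the integral and the assumption $u\in Z_q(\mathbb R^3)$, each of these vanishes as $R\to\infty$.

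\textbf{Step 2 (mollification).} For fixed $R$, let $\rho_\epsilon$ be a standard mollifier and set $u_{R,\epsilon}:=\rho_\epsilon * u_R$. Since $u_R$ has compact support and $\mbox{div }(\rho_\epsilon*u_R)=\rho_\epsilon*\mbox{div }u_R=0$, we have $u_{R,\epsilon}\in C^\infty_{0,\sigma}(\mathbb R^3)$. Standard properties give $u_{R,\epsilon}\to u_R$ in $W^{1,q}(\mathbb R^3)$ as $\epsilon\to 0$. For the weighted term, write
\[
|x|\nabla u_{R,\epsilon}(x)-\bigl(\rho_\epsilon*(|\cdot|\nabla u_R)\bigr)(x)=\int(|x|-|y|)\,\rho_\epsilon(x-y)\,\nabla u_R(y)\,dy,
\]
whose $L^q$-norm is majorized by $\epsilon\|\nabla u_R\|_q$ via $\big||x|-|y|\big|\le|x-y|$ and Young's inequality, while $\rho_\epsilon*(|\cdot|\nabla u_R)\to|\cdot|\nabla u_R$ in $L^q$. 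A diagonal choice $\epsilon_R\downarrow 0$ then yields an approximating sequence in $C^\infty_{0,\sigma}(\mathbb R^3)$ converging to $u$ in $Z_q(\mathbb R^3)$.

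\textbf{Main obstacle.} The delicate point is the uniform (in $R$) control of the Bogovskii corrector in the weighted norm: one must combine the Poincaré-type scaling $\|w_R\|_{L^q(A_R)}\le CR\|\nabla w_R\|_{L^q(A_R)}$ with the decay $|\nabla\chi_R|=O(1/R)$ so that the factors of $R$ cancel, and observe that the potentially bad weight $|x|\le 2R$ on $A_R$ is exactly compensated by $|\nabla\chi_R|=O(1/R)$ in the expressions $|x|u\otimes\nabla\chi_R$ and $|x|\nabla w_R$. Once this cancellation is made explicit, the rest of the argument reduces to dominated convergence and standard mollifier estimates.
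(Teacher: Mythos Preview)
Your proof is correct and takes a genuinely different route from the paper's. The paper adapts the Masuda--Kozono--Sohr construction: it first mollifies and then applies the operator $(-\Delta+\nabla\operatorname{div})\,\varphi_k\,(k^{-2}-\Delta)^{-1}$, where the specific balance $\lambda=1/k^{2}$ between the cut-off scale and the resolvent parameter is exactly what makes the weighted norm $\||x|\nabla(\cdot)\|_q$ converge; the argument then rests on the resolvent asymptotics $\lambda(\lambda-\Delta)^{-1}\to 0$ and $\sqrt{\lambda}\,\nabla(\lambda-\Delta)^{-1}\to 0$ in $L^q$. You instead restore the solenoidal condition with a Bogovski\u{\i} corrector on the annulus $A_R$ and exploit its scaling $\|\nabla w_R\|_{q}\le CR^{-1}\|u\|_{L^q(A_R)}$, $\|w_R\|_{q}\le C\|u\|_{L^q(A_R)}$, so that the potentially dangerous weight $|x|\le 2R$ on $A_R$ is cancelled by the factor $R^{-1}$. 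Your approach is more elementary in that it avoids the resolvent machinery entirely; the paper's approach avoids invoking Bogovski\u{\i}'s operator and its scaling, and produces a single explicit approximating formula. The mollification step (handling of $\||x|\nabla(\rho_\epsilon*\cdot)\|_q$ via $\big||x|-|y|\big|\le|x-y|$) is essentially the same in both proofs. One minor imprecision in your write-up: the $(1-\chi_R)$ terms are supported on $\{|x|\ge R\}$, not on $A_R$, so they are not literally bounded by $\|u\|_{L^q(A_R)}$ etc.; but they vanish as $R\to\infty$ by the same absolute-continuity argument you invoke, so this does not affect the validity.
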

\begin{proof}
We adapt the approach developed by Masuda \cite[Appendix]{Ma}, Kozono and Sohr \cite[Lemma 4.2]{KS}
to our circumstance.
Let $k\in\mathbb N$ and $\varepsilon >0$.
Given $\psi\in Z_q(\mathbb R^3)$, we set
\[
\psi_{k,\varepsilon}:=(-\Delta+\nabla\mbox{div})\varphi_k\Big(\frac{1}{k^2}-\Delta\Big)^{-1}
(\rho_\varepsilon*\psi) \in C_{0,\sigma}^\infty(\mathbb R^3)
\]
where $\varphi_k$ is defined by
$\varphi_k=\varphi(\frac{\cdot}{k})$ with $\varphi$ being \eqref{cut}
and $\rho_\varepsilon$ stands for the standard mollifier.
Then the desired density follows from
\begin{equation}
\lim_{k\to \infty}\|\psi_{k,\varepsilon}-\rho_\varepsilon *\psi\|_{Z_q(\mathbb R^3)}=0 \quad \mbox{for every $\varepsilon >0$}
\label{appro1}
\end{equation}
\begin{equation}
\lim_{\varepsilon\to 0}\|\rho_\varepsilon *\psi-\psi\|_{Z_q(\mathbb R^3)}=0.
\label{appro2}
\end{equation}
We will discuss those convergence properties merely
in the norm $\big\||x|\nabla (\cdot)\big\|_q$
since the ones in $W^{1,q}(\mathbb R^3)$ are easier.
The specific balance between the cut-off function $\varphi_k$ and the resolvent
$(\lambda -\Delta)^{-1}$ with $\lambda =1/k^2$ is important to show \eqref{appro1}
and this is a point that does not appear in the literature such as \cite{KS,Ma}.

We begin to verify
\begin{equation}
\lim_{\varepsilon \to 0}\big\||x|\nabla(\rho_\varepsilon *\psi-\psi)\big\|_{q,\mathbb R^3}=0.
\label{appro2w}
\end{equation}
Using the relation
\[
|x|\nabla(\rho_\varepsilon *\psi-\psi)
=\int_{\mathbb R^3}(|x|-|y|)\rho_\varepsilon(x-y)\nabla\psi(y)\, dy
+\rho_\varepsilon *(|\cdot|\nabla\psi)-|x|\nabla\psi
\]
the first term of which is estimated in $L^q(\mathbb R^3)$ from above by
\[
\|(|\cdot|\rho_\varepsilon)*\nabla\psi\|_{q,\mathbb R^3}
\leq \varepsilon\|\nabla \psi\|_{q,\mathbb R^3},
\]
we observe \eqref{appro2w}.

We next show
\begin{equation}
\lim_{k\to\infty}\big\||x|\nabla (\psi_{k,\varepsilon}-\rho_\varepsilon *\psi)\big\|_{q,\mathbb R^3}=0, \qquad
\forall\varepsilon >0.
\label{appro1w}
\end{equation}
To this end, let us recall the asymptotic behavior of the resolvent
\begin{equation}
\lambda\|(\lambda-\Delta)^{-1}f\|_{q,\mathbb R^3}
+\sqrt{\lambda}\|\nabla (\lambda-\Delta)^{-1}f\|_{q,\mathbb R^3}\to 0 \qquad (0<\lambda\to 0)
\label{resolvent}
\end{equation}
for every $f\in L^q(\mathbb R^3)$,
which follows from the fact that the range of $-\Delta$ is dense in $L^q(\mathbb R^3)$.
We fix $\varepsilon >0$ and set $w=\rho_\varepsilon *\psi$.
The resolvent parameter is denoted by $\lambda >0$, that will be chosen later as
$\lambda =1/k^2$.
Since $\mbox{div $\psi$}=0$, the resolvent $(\lambda -\Delta)^{-1}w$ is also solenoidal, so that
\[
W(\lambda):=
(-\Delta+\nabla\mbox{div})(\lambda -\Delta)^{-1}w
=-\Delta(\lambda -\Delta)^{-1}w.
\]
Then we have
\begin{equation*}
|x|\nabla (-\Delta+\nabla\mbox{div})\varphi_k(\lambda-\Delta)^{-1}w
=|x|\varphi_k\nabla W(\lambda)+R
\end{equation*}
with a remainder $R$ consisting of several terms, all of which involve derivatives of the cut-off function
$\varphi_k$.
We intend to prove
\begin{equation}
\lim_{k\to\infty}\big\||x|\varphi_k\nabla W(1/k^2)-|x|\nabla w\big\|_{q,\mathbb R^3}=0
\label{appro1w1}
\end{equation}
\begin{equation}
\lim_{k\to \infty}\|R\|_{q,\mathbb R^3}=0
\label{appro1w2}
\end{equation}
which imply \eqref{appro1w}.

On account of \eqref{resolvent}, we find that
\begin{equation*}
\begin{split}
&\quad \big\||x|\varphi_k\nabla W(\lambda)-|x|\nabla w\|_{q,\mathbb R^3} \\
&\leq\big\||x|\varphi_k\nabla (W(\lambda)-w)\big\|_{q,\mathbb R^3}
+\|(\varphi_k-1)|x|\nabla w\|_{q,\mathbb R^3}  \\
&\leq 2k\lambda\|\nabla (\lambda -\Delta)^{-1}w\|_{q,\mathbb R^3}
+\big\||x|\nabla w\big\|_{q,\{|x|> k\}}
\end{split}
\end{equation*}
with $\lambda =1/k^2$ goes to zero as $k\to \infty$.
This concludes \eqref{appro1w1}.
Moreover, we observe
\begin{equation*}
\begin{split}
&\quad \|R\|_{q,\mathbb R^3}  \\
&\leq C\big\||x||\nabla\varphi_k||\nabla^2(\lambda-\Delta)^{-1}w|\big\|_{q,A_k}
+C\big\||x||\nabla^2\varphi_k||\nabla(\lambda-\Delta)^{-1}w|\big\|_{q,\mathbb R^3}  \\
&\quad +C\big\||x||\nabla^3\varphi_k||(\lambda-\Delta)^{-1}w|\big\|_{q,\mathbb R^3}  \\
&\leq C\|\nabla^2(\lambda-\Delta)^{-1}w\|_{q,A_k}
+\frac{C}{k}\|\nabla(\lambda-\Delta)^{-1}w\|_{q,\mathbb R^3}
+\frac{C}{k^2}\|(\lambda-\Delta)^{-1}w\|_{q,\mathbb R^3}
\end{split}
\end{equation*}
where $A_k=\{x\in\mathbb R^3;\,k<|x|<2k\}$.
We then employ \eqref{resolvent} again to see that the second and third terms with 
$\lambda=1/k^2$ go to zero as $k\to\infty$.
Notice that the only balance is $\lambda=1/k^2$ to obtain the convergence of those terms 
as well as \eqref{appro1w1}.
In order to furnish \eqref{appro1w2}, it remains to show that so does the first term,
which needs slightly more argument (but the balance $\lambda=1/k^2$ is no longer necessary in what follows).
We set $(-\Delta)^{-1}w:=(4\pi|x|)^{-1}* w$, that solves the Poisson equation in $\mathbb R^3$
with the force $w$.
Then we have
\begin{equation*}
\|\nabla^2(\lambda-\Delta)^{-1}w\|_{q,A_k}
\leq \|\nabla^2[(\lambda-\Delta)^{-1}w-(-\Delta)^{-1}w]\|_{q,\mathbb R^3}
+\|\nabla^2(-\Delta)^{-1}w\|_{q,A_k}
\end{equation*}
where the latter term goes to zero as $k\to \infty$ since $w\in L^q(\mathbb R^3)$
implies $\nabla^2(-\Delta)^{-1}w\in L^q(\mathbb R^3)$, whereas the former term is
estimated from above by
\[
\|\Delta [(\lambda-\Delta)^{-1}w-(-\Delta)^{-1}w]\|_{q,\mathbb R^3}
=\lambda\|(\lambda-\Delta)^{-1}w\|_{q,\mathbb R^3}
\]
which goes to zero as $\lambda =1/k^2\to 0$ by \eqref{resolvent}.
The proof is complete.
\end{proof}

We are in a position to provide the integral equation \eqref{mild-wh} for $v(t)$ given by \eqref{modi0}.
\begin{proposition}
Let $u(t)$ be the solution obtained in Theorem \ref{main1} and
$v(t)$ the function given by \eqref{modi0}.
Then we have
\begin{equation}
v(t)=\int_{-\infty}^tU(t,s)P_{\mathbb R^3}
\big[g(s)-\mbox{\rm{div}$(v\otimes v)(s)$}\big]\,ds
\label{mild-wh}
\end{equation}
in $L^q_\sigma(\mathbb R^3)$
for all $t\in\mathbb R$ and $q\in (3,\infty)$, where $g$ is given by \eqref{g}
and $U(t,s)$ is the evolution operator for the whole space problem,
see \eqref{formula}--\eqref{funda-sol} for its representation.
\label{mild-whole}
\end{proposition}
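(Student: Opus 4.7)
The plan is to test the weak form \eqref{weak-wh} against the time-dependent test function $\Psi(\tau) := U(t,\tau)^*\psi_0$ for fixed $t\in\mathbb R$ and $\psi_0\in C^\infty_{0,\sigma}(\mathbb R^3)$, then integrate in $\tau$. Since $\Psi(\tau)$ is not in $C^\infty_{0,\sigma}(\mathbb R^3)$, the first step is to extend \eqref{weak-wh} to all $\psi\in Z_{q'}(\mathbb R^3)$ for suitable $q'$ by way of Lemma \ref{density}. The class $Z_{q'}$ is tailored precisely to make sense of the term $\langle v\otimes(\omega\times x),\nabla\psi\rangle$, where the linear growth of $\omega\times x$ is absorbed by $|x|\nabla\psi\in L^{q'}$. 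By the regularity property of the adjoint evolution cited just before Lemma \ref{density}, we have $\Psi(\tau)\in Z_{q'}(\mathbb R^3)$ for every $q'\in(1,\infty)$ and $\tau<t$.

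For such $\psi_0$ and for $\tau\in(-\infty,t)$, set $h(\tau):=\langle v(\tau),\Psi(\tau)\rangle$. Combining the extended \eqref{weak-wh} (which supplies $d\langle v(\tau),\cdot\rangle/d\tau$) with the backward equation \eqref{back-wh} (which supplies $\partial_\tau \Psi = L_{\mathbb R^3}(\tau)^*\Psi$) and using the integration-by-parts identity
\begin{equation*}
\langle v(\tau), L_{\mathbb R^3}(\tau)^*\Psi(\tau)\rangle=\langle\nabla v,\nabla\Psi\rangle+\langle v\otimes(\eta+\omega\times x),\nabla\Psi\rangle+\langle\omega\times v,\Psi\rangle
\end{equation*}
(valid because both $v$ and $\Psi$ are solenoidal, so the Fujita--Kato projection may be dropped), all the terms associated with the linear operator cancel and one is left with
\begin{equation*}
h'(\tau) = \langle (v\otimes v)(\tau),\nabla\Psi(\tau)\rangle + \langle g(\tau),\Psi(\tau)\rangle.
\end{equation*}
Integrating over $(s,t)$ and using $\Psi(t)=\psi_0$ yields
\begin{equation*}
\langle v(t),\psi_0\rangle - \langle v(s),U(t,s)^*\psi_0\rangle = \int_s^t\Big[\langle(v\otimes v)(\tau),\nabla\Psi(\tau)\rangle+\langle g(\tau),\Psi(\tau)\rangle\Big]d\tau.
\end{equation*}

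Passing $s\to-\infty$, the boundary term vanishes by pairing $v(s)$, bounded uniformly in $L^{3,\infty}_\sigma(\mathbb R^3)$ by \eqref{v-cl1}, with $U(t,s)^*\psi_0$, whose $L^{3/2,1}$-norm decays at a positive power of $t-s$ by the $L^{q_0}$-$L^r$ smoothing estimate for the adjoint whole-space evolution operator (applied with $q_0$ close to $1$). After integration by parts in $x$ and self-adjointness of $P_{\mathbb R^3}$, the integrand rewrites as $\langle U(t,\tau)P_{\mathbb R^3}[g-\mbox{div}(v\otimes v)](\tau),\psi_0\rangle$. The Bochner summability of the resulting integral in $L^q_\sigma(\mathbb R^3)$ for $q\in(3,\infty)$ follows exactly as in Lemma \ref{b-s}, via the whole-space analogue of \eqref{evo5} for $U(t,\tau)P_{\mathbb R^3}\mbox{div}$, the compact $x$-support and $L^r$-bounds of $g$ from Lemma \ref{v-g}, and the fact that $v\in L^\infty(\mathbb R;L^{r_0}_\sigma\cap L^{r_1}_\sigma)$ for any $3<r_0<r_1<\infty$ by \eqref{v-cl2}. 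As $\psi_0\in C^\infty_{0,\sigma}(\mathbb R^3)$ is arbitrary and dense in $L^{q'}_\sigma(\mathbb R^3)$, the identity \eqref{mild-wh} holds in $L^q_\sigma(\mathbb R^3)$ for every $q\in(3,\infty)$.

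The main obstacle is the first step: the unbounded coefficient $\omega\times x$ prevents us from extending \eqref{weak-wh} to the standard Sobolev completion of $C^\infty_{0,\sigma}(\mathbb R^3)$, and one is forced to work in $Z_{q'}(\mathbb R^3)$, whose density by $C^\infty_{0,\sigma}$-functions is exactly the content of Lemma \ref{density}. Secondarily, because \eqref{weak-wh} is only a time derivative in the distributional sense (recall $v$ has only weak-$*$ continuity in $L^{3,\infty}$), some care is needed to justify the product-rule differentiation of $h(\tau)$; this can be bypassed by working with the integral formulation directly, approximating $v$ in $\tau$ on the $(s,t)$-interval via \eqref{reg1} and passing to the limit term by term using the uniform bounds from Theorem \ref{main1} and the quantitative $L^q$-$L^r$ estimates for $U(t,\tau)^*$.
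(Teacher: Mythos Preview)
Your proof is correct and follows essentially the same route as the paper: extend the test class in \eqref{weak-wh} to $Z_{q'}(\mathbb R^3)$ via Lemma \ref{density}, insert $U(t,s)^*\psi_0$, use the backward equation \eqref{back-wh} so that the linear terms cancel, integrate in $s$, and pass $s\to-\infty$. The only cosmetic difference is the treatment of the boundary term at $-\infty$: the paper bounds $\|U(t,\tau)v(\tau)\|_{q,\mathbb R^3}$ directly via \eqref{v-cl2} and the analogue of \eqref{evo1}, whereas you dualize and use decay of $\|U(t,s)^*\psi_0\|_{3/2,1}$ against \eqref{v-cl1}; both work.
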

\begin{proof}
First of all, along the same argument as in Lemma \ref{b-s}
with the aid of decay estimate of the composite operator $U(t,\tau)P_{\mathbb R^3}\,\mbox{div}$ as in \eqref{evo5},
where $q\geq 3/2$ is redundant,
the second term of the right-hand side of \eqref{mild-wh} makes sense as the Bochner integral in 
$L^q_\sigma(\mathbb R^3)$ 
for every $q\in (3,\infty)$
because of \eqref{v-cl2}.
The same thing for the other term involving $g$ follows from \eqref{v-g-est} with $r$ being
close to $1$ dependently on given $q\in (3,\infty)$ together with estimate of $U(t,s)$ as in \eqref{evo1}.

Let $1<r<3/2$.
By Lemma \ref{density} the class of test functions in \eqref{weak-wh} extends to
$\psi\in Z_r(\mathbb R^3)$.
In fact, given $\psi\in Z_r(\mathbb R^3)$, let us take $\psi_j\in C^\infty_{0,\sigma}(\mathbb R^3)$
such that $\|\psi_j-\psi\|_{Z_r(\mathbb R^3)}\to 0$ as $j\to\infty$.
Since $v(t),\,\nabla v(t),\,(v\otimes v)(t),\,g(t)\in L^{r^\prime}(\mathbb R^3)$ with $r^\prime >3$,
we have
\begin{equation*}
\begin{split}
\langle v(t),\psi_j\rangle
&\to \langle v(t),\psi\rangle,  \\
\frac{d}{dt}\langle v(t),\psi_j\rangle  
&\to
-\langle \nabla v+v\otimes (\eta+\omega\times x), \nabla\psi\rangle
-\langle \omega\times v, \psi\rangle
+\langle v\otimes v,\nabla\psi\rangle+\langle g,\psi\rangle
\end{split}
\end{equation*}
as $j\to\infty$, where the latter convergence is uniform with respect to $t\in\mathbb R$
in view of \eqref{v-cl2}--\eqref{v-cl3} and \eqref{v-g-est}.
We thus get $\langle v(\cdot),\psi\rangle\in C^1(\mathbb R)$ 
for all $\psi\in Z_r(\mathbb R^3),\, 1<r<3/2$,
together with \eqref{weak-wh}.

We fix $t\in\mathbb R$ and take
$\psi\in C^\infty_{0,\sigma}(\mathbb R^3)$, 
then we know that
$U(t,s)^*\psi\in Z_r(\mathbb R^3)$ 
for every $s\in (-\infty,t)$ and $r\in (1,\infty)$.
We can employ \eqref{weak-wh} with $\psi$ replaced by $U(t,s)^*\psi$ and \eqref{back-wh} to find
\begin{equation*}
\begin{split}
&\quad \frac{d}{ds}\langle U(t,s)v(s),\psi\rangle
=\frac{d}{ds}\langle v(s), U(t,s)^*\psi\rangle \\
&=-\langle \nabla v+v\otimes (\eta(s)+\omega(s)\times x), \nabla U(t,s)^*\psi\rangle
-\langle \omega(s)\times v, U(t,s)^*\psi\rangle \\
&\quad +\langle v\otimes v, \nabla U(t,s)^*\psi\rangle
+\langle g, U(t,s)^*\psi\rangle
+\langle v, L_{\mathbb R^3}(s)^*U(t,s)^*\psi\rangle \\
&=\langle (v\otimes v)(s), \nabla U(t,s)^*\psi\rangle
+\langle g(s), U(t,s)^*\psi\rangle  \\
&=\langle U(t,s)P_{\mathbb R^3}\big[g(s)-\mbox{div$(v\otimes v)(s)$}\big],\,\psi\rangle.
\end{split}
\end{equation*}
Integrating this from $\tau$ to $t$ and letting $\tau\to -\infty$ yield \eqref{mild-wh} in $L^q_\sigma(\mathbb R^3)$
for every $q\in (3,\infty)$ since
$\lim_{\tau\to -\infty}\|U(t,\tau)v(\tau)\|_{q,\mathbb R^3}=0$
follows from the same decay estimate of $U(t,\tau)$ as in \eqref{evo1} together with \eqref{v-cl2}
(as in the proof of Proposition \ref{q-bdd})
and since $\psi\in C_{0,\sigma}^\infty(\mathbb R^3)$ is arbitrary.
\end{proof}
\medskip

\noindent
{\bf 4.3.
Reconstruction procedure}
\medskip

Let $\{\Phi(t,s)\}_{t,s\in\mathbb R}$
be the family of evolution matrices to the ordinary differential equation
$\frac{d\phi}{dt}=-\omega(t)\times\phi$.
Since the right-hand side is skew-symmetric, each $\Phi(t,s)$ is an orthogonal $3\times 3$ matrix, which should be involved in the representation formula of 
the evolution operator $U(t,s)$ for the whole space problem. By using $U(t,s)$,
the solution to the inhomogeneous evolution equation 
\begin{equation}
\frac{du}{dt}+L_{\mathbb R^3}(t)u=P_{\mathbb R^3}g \qquad (t\in\mathbb R)
\label{evo-wh}
\end{equation}
with $g(t)$, possessing an appropriate behavior as $t\to -\infty$,
is described as
\begin{equation}
\begin{split}
u(x,t)&=\int_{-\infty}^t\big[U(t,s)P_{\mathbb R^3}g(s)\big](x)\,ds \\
&=\int_{-\infty}^t\int_{\mathbb R^3}K(x,y;\,t,s)g(y,s)\,dy\,ds
\end{split}
\label{formula}
\end{equation}
with
\begin{equation}
K(x,y;\,t,s):=\Phi(t,s)E\left(\Phi(t,s)^\top\Big(
x+\int_s^t\Phi(t,\tau)\eta(\tau)\,d\tau\Big)
-y,\;t-s\right)
\label{funda-sol}
\end{equation}
\begin{equation}
E(x,t):=(4\pi t)^{-3/2}e^{-|x|^2/4t}\,\mathbb I+\int_t^\infty(4\pi s)^{-3/2}\nabla^2
\left(e^{-|x|^2/4s}\right)\,ds,
\label{stokes-funda}
\end{equation}
where $\mathbb I$ denotes $3\times 3$ unity matrix. 
See \cite[Subsection 3.2]{Hi18}, \cite[Section 3]{Hi20} for this formula,
in which the Stokes fundamental solution $E(x,t)$ is replaced by the heat kernel (the first term of \eqref{stokes-funda})
since the initial value problem for the homogeneous equation is discussed in those papers.
It is 
seen that
\begin{equation}
|\nabla^jE(x,t)|\leq C_j(|x|^2+t)^{-(3+j)/2}
\label{est-stokes}
\end{equation}
for all $x\in\mathbb R^3$, $t>0$ and nonnegative integer $j$.

We begin with some auxiliary estimates, which are essentially known as estimates of
the Oseen fundamental solution $E(x+\zeta t,t)$ and some related potentials, see 
\cite{D1, Fa92, Fi65, G-b, GS07a, GS07b, KNP, Miz, Shi99}.
\begin{lemma}
Let $\zeta\in\mathbb R^3$. 
\begin{enumerate}

\item
We have
\begin{equation}
\begin{split}
\int_{\mathbb R^3}\int_0^\infty\frac{ds}{(|x+\zeta s-y|^2+s)^2}\;
&\frac{dy}{(1+|y|)^2(1+|\zeta||y|+\zeta\cdot y)^2}  \\
&\leq\frac{C}{(1+|x|)(1+|\zeta||x|+\zeta\cdot x)}
\end{split}
\label{auxi1}
\end{equation}
for all $x\in\mathbb R^3$ with some constant $C=C(|\zeta|)>0$.

\item
Let $3/2<q<\infty$ and $R>0$.
Assume that
$g\in L^\infty(\mathbb R;\,L^q(B_R))$.
Then
\begin{equation}
\int_0^\infty\int_{B_R}\frac{|g(y,t-s)|\,dy\,ds}{(|x+\zeta s-y|^2+s)^{3/2}}
\leq\frac{C\|g\|_{L^\infty(\mathbb R;\,L^q(B_R))}}{(1+|x|)(1+|\zeta||x|+\zeta\cdot x)}
\label{auxi2}
\end{equation}
for all $x\in\mathbb R^3$ and $t\in\mathbb R$ with some constant $C=C(|\zeta|,q,R)>0$.
\end{enumerate}
\label{lem-oseen}
\end{lemma}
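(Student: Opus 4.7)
The plan is to handle the two parts of the lemma separately; each reduces to standard anisotropic Oseen-kernel/potential estimates available in the cited references \cite{D1, Fa92, Fi65, G-b, GS07a, GS07b, KNP, Miz, Shi99}.

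For Part 1, I would first obtain a pointwise bound on the inner $s$-integral $J(z) := \int_0^\infty (|z+\zeta s|^2+s)^{-2}\,ds$. Completing the square in $s$ yields $|z+\zeta s|^2+s = |\zeta|^2(s-r)^2+d$ with $r=r(z,\zeta)$ and $d=d(z,\zeta)\geq 0$; the elementary estimate $\int_{r_-}^\infty (|\zeta|^2 u^2+d)^{-2}\,du \leq C|\zeta|^{-1}d^{-3/2}$, analyzed in the various directions of $z$ relative to $\zeta$, produces a piecewise-anisotropic estimate of the form
\begin{equation*}
J(z) \leq \frac{C}{|z|^2}\ (|z|\leq 1), \qquad J(z)\leq \frac{C}{|z|^{3/2}(1+s(z))^{3/2}}\ (|z|\geq 1),
\end{equation*}
where $s(z) := |\zeta||z|+\zeta\cdot z$; this matches the asymptotics one expects from the Oseen gradient. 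Substituting this into the original integral reduces Part 1 to the classical Oseen convolution estimate
\begin{equation*}
\int_{\mathbb R^3}\frac{dy}{|x-y|^{3/2}(1+s(x-y))^{3/2}(1+|y|)^2(1+s(y))^2}\leq \frac{C}{(1+|x|)(1+s(x))},
\end{equation*}
plus an easy local calculation for the near-diagonal contribution $|x-y|\leq 1$; both are standard and can be extracted from \cite{Fa92, G-b, GS07a, GS07b, KNP}.

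For Part 2, I would apply H\"older's inequality in $y$ on $B_R$ to separate $g$ from the kernel, producing
\begin{equation*}
\int_0^\infty \|g(\cdot,t-s)\|_{q,B_R}\Bigl(\int_{B_R}(|x+\zeta s-y|^2+s)^{-3q'/2}\,dy\Bigr)^{1/q'}\,ds,
\end{equation*}
and then split the $s$-axis by the position of the moving center $x+\zeta s$ relative to $B_R$: set $A=\{s\geq 0:|x+\zeta s|\geq 2R\}$ and $B$ its complement. On $A$, the kernel is comparable on $B_R$ to $(|x+\zeta s|^2+s)^{-3/2}$, so the inner factor is bounded by $C_R(|x+\zeta s|^2+s)^{-3/2}$ and the target follows from the standard identity $\int_0^\infty(|x+\zeta s|^2+s)^{-3/2}\,ds\leq C/(|x|(1+s(x)))$. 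The set $B$ is nonempty only when the transversal component $x_\perp$ of $x$ relative to $\zeta$ satisfies $|x_\perp|<2R$, and is an $s$-interval of length $\lesssim R/|\zeta|$; on this "wake tube" I would use the crude bound $(|x+\zeta s-y|^2+s)^{-3/2}\leq s^{-3/2}$ for large $|x|$ (exploiting $s\gtrsim|x|/|\zeta|$ throughout $B$), and the explicit evaluation $\int_{B_R}(\cdots)^{-3q'/2}\,dy\leq C s^{(3-3q')/2}$ for small $|x|$, the latter requiring $q'<3$ (i.e., $q>3/2$) for integrability in $s$ near $0$. Since $(1+s(x))$ is bounded by $C(|\zeta|,R)$ throughout the wake tube, the collected contributions match $C\|g\|_{L^\infty L^q}/((1+|x|)(1+s(x)))$.

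The main obstacle is Part 2 on $B$ near the borderline: the assumption $q>3/2$ is sharp because at $q=3/2$ the inner $L^{q'}$-norm of the kernel yields a $s^{-1}$-type singularity at $s=0$ that just fails to integrate, which is precisely the critical scaling of the target $|x|^{-1}$ decay. For Part 1, the comparable subtlety is ensuring the pointwise bound on $J(z)$ captures the correct anisotropy in both near-origin and far-field regimes so that the cited Oseen convolution estimate applies cleanly.
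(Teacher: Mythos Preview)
For Part 1 your approach coincides with the paper's (bound the $s$-integral by the anisotropic Oseen-gradient kernel \eqref{oseen-grad}, then cite the convolution estimates of \cite{Fa92, KNP, G-b}); just note that the claim $d\geq 0$ after completing the square fails (take $z=0$, where $d=-1/(4|\zeta|^2)$), so that step needs a case distinction on the sign of the vertex, though the piecewise target you state for $J$ is correct and matches \eqref{oseen-grad}, which the paper simply quotes from \cite{GS07a, D1}. For Part 2 you take a genuinely different decomposition: the paper splits in $x$ and, for $|x|>2R$, invokes Deuring's four-piece pointwise lower bound \eqref{deu-ineq}--\eqref{deu2} on the kernel (then handles $|x|\leq 2R$ by $\int_0^1+\int_1^\infty$ with two H\"older exponents), whereas you split the $s$-axis by whether the moving center $x+\zeta s$ lies outside $B_{2R}$ (your set $A$) or inside (your set $B$), using the scalar Oseen estimate on $A$ and the bounded-length/large-$s$ observation on $B$. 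Your route is more geometric and avoids Deuring's inequality at the price of quoting the Oseen fundamental-solution bound; the paper's route is more self-contained in $s$. Two small repairs are needed in your sketch: for bounded $|x|$ the $A$-estimate as written yields $C/|x|$ rather than a bounded quantity, but this is fixed by noting $|x+\zeta s|^2+s\geq (4R^2+s)/2$ throughout $A$; and when $\zeta=0$ with $|x|<2R$ your set $B$ becomes all of $[0,\infty)$ so the bounded-length argument collapses, and there you still need the paper's $\int_0^1+\int_1^\infty$ device (equivalently, bound by $s^{-3/2}$ for $s\geq 1$).
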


\begin{proof}
By Galdi and Silvestre \cite[Lemma 1]{GS07a}
(see also Deuring \cite[Theorem 4.1]{D1}) 
it is known that
\begin{equation}
\begin{split}
&\int_0^\infty (|x+\zeta s|^2+s)^{-2}\,ds  \\
&\quad \leq\left\{
\begin{array}{ll}
C|\zeta|^{1/2}|x|^{-3/2}(1+|\zeta||x|+\zeta\cdot x)^{-3/2}, \quad 
& |x|>(4|\zeta|)^{-1}, \\
C|x|^{-2}, & 0<|x|\leq (4|\zeta|)^{-1},
\end{array}
\right.
\end{split}
\label{oseen-grad}
\end{equation}
where the right-hand side is understood as $C|x|^{-2}$ for all $x\in \mathbb R^3\setminus\{0\}$ if $\zeta=0$.
With \eqref{oseen-grad} at hand,
\eqref{auxi1} follows from results due to 
Farwig \cite[Lemma 3.1]{Fa92}, 
Kra\v cmar, Novotn\'y and Pokorn\'y \cite[Theorem 3.2]{KNP},
Galdi \cite[Lemma VIII.3.5]{G-b}.

When $\zeta=0$, estimate \eqref{auxi2} for $|x|> 2R$ is easy.
Let $\zeta\in\mathbb R^3\setminus\{0\}$.
Deuring \cite[Theorem 4.1, Corollary 4.1]{D1}
showed \eqref{auxi2} with $\int_0^\infty$ 
replaced by $\int_0^t$
for $|x|>2R$.
If we take into account 
(4.6)--(4.7) of \cite{D1}, then we see that his estimate for $|x|>2R$
is also valid for our integral. 
But it would be better to give a brief sketch of his proof for convenience of readers.
In addition, we need \eqref{auxi2} in the whole $\mathbb R^3$; in fact, the condition
$q>3/2$ is necessary for boundedness locally in $\mathbb R^3$.

Let $|x|>2R$.
Then the key observation due to Deuring \cite[(4.5)--(4.7)]{D1} is
\begin{equation}
\begin{split}
(|x+\zeta s-y|^2+s)^{1/2}
&\geq c\left[\Big||x-y|-|\zeta|s\Big|+\big\{s\,(1+|\zeta||x|+\zeta\cdot x)\big\}^{1/2}\right]  \\
&\geq c\,F(x,s)>0
\end{split}
\label{deu-ineq}
\end{equation}
for $|x|>2R$, $y\in B_R$ and $s>0$, where
$c=\min\big\{\frac{1}{\sqrt 2},\frac{1}{\sqrt{1+2|\zeta|R}}\big\}$ and
\begin{equation*}
F(x,s):=
\left\{
\begin{array}{ll}
|\zeta|s-(|x|+R)+\big[\frac{|x|}{|\zeta|}(1+|\zeta||x|+\zeta\cdot x)\big]^{1/2} \quad &(s>\frac{|x|+R}{|\zeta|}), \\
\big[\frac{|x|}{2|\zeta|}(1+|\zeta||x|+\zeta\cdot x)\big]^{1/2} & (\frac{|x|-R}{|\zeta|}<s\leq\frac{|x|+R}{|\zeta|}), \\
|x|-R-|\zeta|s+\big[\frac{|x|}{4|\zeta|}(1+|\zeta||x|+\zeta\cdot x)\big]^{1/2} &(\frac{|x|}{4|\zeta|}<s\leq\frac{|x|-R}{|\zeta|}), \\
\frac{|x|}{2}-|\zeta|s &(0<s\leq \frac{|x|}{4|\zeta|}),
\end{array}
\right.
\end{equation*}
which enjoys
\begin{equation}
\int_0^\infty F(x,s)^{-3}\,ds\leq C|x|^{-1}(1+|\zeta||x|+\zeta\cdot x)^{-1}
\label{deu2}
\end{equation}
for $|x|>2R$ with some positive constant $C=C(|\zeta|,R)>0$.
After use of the H\"older inequality in $y$ with the assumption on $g$, we apply
\eqref{deu-ineq}--\eqref{deu2} to obtain \eqref{auxi2} for $|x|>2R$
when $\zeta\in\mathbb R^3\setminus\{0\}$.

We next discuss the case $|x|\leq 2R$, for which it suffices to show the boundedness of the
left-hand side of \eqref{auxi2}. 
Indeed, it is uniformly bounded in the whole $\mathbb R^3$
with respect to $\zeta\in\mathbb R^3$.
We first observe that if $\alpha>3/2$, then
\begin{equation}
\sup_{x\in\mathbb R^3}\int_{\mathbb R^3}(|x+\zeta s-y|^2+s)^{-\alpha}\,dy
=\int_{\mathbb R^3}(|z|^2+s)^{-\alpha}\,dz
=C_\alpha\, s^{-\alpha+3/2}
\label{int-y0}
\end{equation}
for all $s>0$ 
by the change of variable $z=y-x-\zeta s$. 
We divide the left-hand side of \eqref{auxi2} into
$\left(\int_0^1+\int_1^\infty\right)\int_{B_R}$. 
We then employ the H\"older inequality in $y$ to take $\|g(t-s)\|_{q,B_R}$
with $q\in (3/2,\infty)$ for the former and the one to take $\|g(t-s)\|_{r,B_R}$ with fixed $r\in (1,3/2)$ for the latter.
Using \eqref{int-y0}
with $\alpha=3q^\prime/2$ and $\alpha=3r^\prime/2$, respectively,
we find that 
the left-hand side of \eqref{auxi2} is bounded from above by
\[
C\|g\|_{L^\infty(\mathbb R;\, L^q(B_R))}
\int_0^1s^{-3/2q}\,ds +
C\|g\|_{L^\infty(\mathbb R;\, L^r(B_R))}
\int_1^\infty s^{-3/2r}\,ds,
\]
which completes the proof.
\end{proof}

Let $m>0$ and $\zeta\in\mathbb R^3$.
Following Galdi \cite{G-new}, let us introduce the anisotropic weighted space
\begin{equation*}
X_{m,\zeta}:=\big\{f\in 
L^\infty(\mathbb R^3_x\times \mathbb R_t);\;
(1+|x|)^m(1+|\zeta||x|+\zeta\cdot x)^m f\in L^\infty(\mathbb R^3_x\times \mathbb R_t)
\big\}
\end{equation*}
which is a Banach space endowed with norm
\[
[f]_{m,\zeta}:=
\left\|(1+|x|)^m(1+|\zeta||x|+\zeta\cdot x)^m f\right\|_{L^\infty(\mathbb R^3\times \mathbb R)}.
\]
\begin{lemma}
Set
\begin{equation}
\begin{split}
&(Sg)(\cdot,t)=\int_{-\infty}^tU(t,s)P_{\mathbb R^3}g(s)\,ds,  \\
&(\Lambda G)(\cdot,t)=-\int_{-\infty}^tU(t,s)P_{\mathbb R^3}\mbox{\rm{div} $G(s)$}\,ds.
\end{split}
\label{op-wh}
\end{equation}
Assume \eqref{wake-cond} for some $\zeta\in\mathbb R^3$.

\begin{enumerate}
\item
Let $3/2<q<\infty$ and
$g\in L^\infty(\mathbb R;\,L^q(\mathbb R^3))$ with 
$g=0$ \rm{a.e.}$(\mathbb R^3\setminus B_2)\times\mathbb R$.
Then $Sg\in X_{1,\zeta}$ subject to
\begin{equation}
[Sg]_{1,\zeta}
\leq C_0
\|g\|_{L^\infty(\mathbb R;\,L^q(\mathbb R^3))}
\label{est-wake1}
\end{equation}
with some constant $C_0=C_0(|\zeta|,M,q)>0$ independent of $g$,
where $M$ is given by \eqref{wake-cond}.

\item
Assume further that $\omega(t)$ is parallel to $\zeta$ for every $t\in\mathbb R$ if $\zeta\neq 0$.
Let $G\in X_{2,\zeta}$.
Then $\Lambda G\in X_{1,\zeta}$ subject to
\begin{equation}
[\Lambda G]_{1,\zeta}\leq C_1[G]_{2,\zeta}
\label{est-wake2}
\end{equation}
with some constant $C_1=C_1(|\zeta|,M)>0$ independent of $G$,
where $M$ is given by \eqref{wake-cond}.
\end{enumerate}
\label{est-funda}
\end{lemma}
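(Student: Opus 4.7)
The plan is to insert the explicit representation \eqref{formula}--\eqref{funda-sol} of $U(t,s)$ into the definitions \eqref{op-wh}, reduce both integrals to ones to which Lemma \ref{lem-oseen} applies by a change of variables, and then invoke that lemma. Substituting $s' = t-s$ and using \eqref{est-stokes} with $j=0$ and $j=1$ for $E$, one has
\[
|K(x,y;t,t-s')|\le C(|\tilde x - y|^2+s')^{-3/2},\qquad
|\nabla_y K(x,y;t,t-s')|\le C(|\tilde x - y|^2+s')^{-2},
\]
where $\tilde x := \Phi(t,t-s')^\top\bigl(x + \int_{t-s'}^t \Phi(t,\tau)\eta(\tau)\,d\tau\bigr)$. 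The wake hypothesis \eqref{wake-cond} lets one write $\int_{t-s'}^t\Phi(t,\tau)\eta(\tau)\,d\tau = s'\zeta + \delta(t,s')$ with $|\delta(t,s')|\le M$, and orthogonality of $\Phi(t,t-s')$ then yields $|\tilde x - y| = |x + s'\zeta - y'|$ under the substitution $y' := \Phi(t,t-s')y - \delta(t,s')$, whose Jacobian is $1$.

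For part~(i) this substitution sends $\mbox{supp}\,g(\cdot,t-s')\subset B_2$ into $B_{2+M}$ and preserves the $L^q$-norm in $y$, so
\[
|(Sg)(x,t)|\le C\int_0^\infty\int_{B_{2+M}}\frac{|\tilde g(y',t-s')|\,dy'\,ds'}{(|x+s'\zeta-y'|^2+s')^{3/2}}
\]
with $\|\tilde g(\cdot,t-s')\|_{q,B_{2+M}} = \|g(\cdot,t-s')\|_{q,\mathbb R^3}$, and \eqref{auxi2} with $R = 2+M$ immediately produces \eqref{est-wake1}.

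For part~(ii), since $P_{\mathbb R^3}$ is already built into the Stokes kernel $E$, integration by parts represents $U(t,s)P_{\mathbb R^3}\,\mbox{div}\,G$ with kernel $-\nabla_y K$, and the same substitution gives
\[
|(\Lambda G)(x,t)|\le C\int_0^\infty\int_{\mathbb R^3}\frac{|G(\Phi(t,t-s')^\top(y'+\delta(t,s')),\,t-s')|\,dy'\,ds'}{(|x+s'\zeta-y'|^2+s')^2}.
\]
Here the parallelism $\omega(t)\parallel\zeta$ is essential: because $\phi\equiv\zeta$ then solves $\tfrac{d\phi}{dt} = -\omega\times\phi$, uniqueness forces $\Phi(t,s)\zeta = \zeta$ for all $t,s$, so the anisotropic weight $1 + |\zeta||\cdot| + \zeta\cdot(\cdot)$ is preserved by the orthogonal factor of the substitution. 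The remaining bounded shift by $\delta$ is absorbed by a short case-split (whether $1+|\zeta||y'|+\zeta\cdot y'\ge 4|\zeta|M+1$ or not), yielding the pointwise comparison
\[
|G(\Phi(t,t-s')^\top(y'+\delta),\,t-s')| \le \frac{C(|\zeta|,M)\,[G]_{2,\zeta}}{(1+|y'|)^2\,(1+|\zeta||y'|+\zeta\cdot y')^2}.
\]
Applying \eqref{auxi1} to the resulting integral in $(y',s')$ then delivers \eqref{est-wake2}. The main obstacle is precisely this last weight comparison: without $\omega\parallel\zeta$ the rotation would alter the distinguished direction $\zeta$ as it acts on $y$, and the wake weight would not transfer cleanly across the orthogonal substitution; the bounded translation by $\delta$ is comparatively benign once the direction is preserved.
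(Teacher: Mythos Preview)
Your proof is correct and follows essentially the same route as the paper: both insert the explicit kernel \eqref{funda-sol}, perform the change of variables $y\mapsto y'=\Phi(t,s)y-\int_s^t\{\Phi(t,\tau)\eta(\tau)-\zeta\}\,d\tau$ to reduce to an Oseen-type kernel in the fixed direction $\zeta$, and then invoke \eqref{auxi2} for part~(i) and \eqref{auxi1} for part~(ii), using $\Phi(t,s)\zeta=\zeta$ (equivalently $\Phi(t,s)^\top\zeta=\zeta$) to transfer the anisotropic weight through the substitution. The only cosmetic difference is that the paper handles the bounded shift by $\delta$ via the direct inequality $(1+|y'|)(1+|\zeta||y'|+\zeta\cdot y')\le (1+M)(1+2M|\zeta|)(1+|y|)(1+|\zeta||y|+\zeta\cdot y)$ rather than your case-split, but the resulting bound is the same.
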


\begin{proof}
In view of \eqref{formula}--\eqref{funda-sol} we make the change of variable
\begin{equation}
y\mapsto z=\Phi(t,s)y-\int_s^t\big\{\Phi(t,\tau)\eta(\tau)-\zeta\big\}\,d\tau
\label{change}
\end{equation}
to obtain
\[
(Sg)(x,t)=
\int_{-\infty}^t\int_{\mathbb R^3}
\Phi(t,s)E\left(\Phi(t,s)^\top\Big(x+\zeta (t-s)-z\Big),\;t-s\right)\widetilde g(z,s)\,dz\,ds
\]
with
$\widetilde g(z,t):=g(y,t)$.
By 
the assumption \eqref{wake-cond} the function $\widetilde g(t)$ still 
satisfies
\begin{equation*}
\begin{split}
&\widetilde g\in L^\infty(\mathbb R;\,L^q(\mathbb R^3)), \qquad
\widetilde g=0 \quad\mbox{a.e.}\; 
(\mathbb R^3\setminus B_{2+M})\times\mathbb R,  \\
&\sup_{t\in \mathbb R}\|\widetilde g(t)\|_{q,\mathbb R^3}
=\sup_{t\in\mathbb R}\|g(t)\|_{q,\mathbb R^3}.
\end{split}
\end{equation*}
Then \eqref{est-wake1} follows from \eqref{est-stokes} and \eqref{auxi2}.

Let us show the second assertion.
By the change of variable \eqref{change} after integration by parts in \eqref{formula} with $g=\mbox{div $G$}$, the function
$(\Lambda G)(x,t)$ is rewritten as
\begin{equation*}
\begin{split}
&\quad (\Lambda G)(x,t)  \\
&=\int_{-\infty}^t\int_{\mathbb R^3}\nabla_yK(x,y;\,t,s):G(y,s)\,dy\,ds \\
&=-\int_{-\infty}^t\int_{\mathbb R^3}\Phi(t,s)(\nabla E)
\left(\Phi(t,s)^\top\Big(x+\zeta(t-s)-z\Big),\; t-s\right)
\widetilde G(z,s)\,dz\,ds
\end{split}
\end{equation*}
with
$\widetilde G(z,t):=G(y,t)$, that satisfies
\begin{equation}
\begin{split}
|\widetilde G(z,t)|
&\leq [G]_{2,\zeta}\,(1+|y|)^{-2}(1+|\zeta||y|+\zeta\cdot y)^{-2}  \\
&\leq c_*[G]_{2,\zeta}\,(1+|z|)^{-2}(1+|\zeta||z|+\zeta\cdot z)^{-2}\quad
\mbox{a.e. $\mathbb R^3\times \mathbb R$}.
\end{split}
\label{wake-inv}
\end{equation}
In fact, 
since $\omega(t)$ is parallel to $\zeta$, we have
$\Phi(t,s)^\top\zeta=\zeta$, which implies
$\zeta\cdot(\Phi(t,s)y-y)=0$, so that
\[
|\zeta\cdot (z-y)|=|\zeta\cdot (z-\Phi(t,s)y)|\leq M|\zeta|
\]
by virtue of 
\eqref{wake-cond}.
We thus observe
\begin{equation*}
\begin{split}
(1+|z|)(1+|\zeta||z|+\zeta\cdot z)
&\leq (1+|y|+M)(1+|\zeta||y|+\zeta\cdot y+2M|\zeta|) \\
&\leq (1+M)(1+2M|\zeta|)(1+|y|)(1+|\zeta||y|+\zeta\cdot y) 
\end{split}
\end{equation*}
which leads to \eqref{wake-inv} with
$c_*=(1+M)^2(1+2M|\zeta|)^2$.
Then \eqref{est-wake2} follows from \eqref{est-stokes} and \eqref{auxi1}.
The proof is complete.
\end{proof}
\begin{remark}
From the proof we see that
\begin{equation*}
\begin{split}
&(1+|x|)(1+|\zeta||x|+\zeta\cdot x)|(Sg)(x,t)|
\leq C_0\|g\|_{L^\infty(\mathbb R;\, L^q(\mathbb R^3))} \\
&(1+|x|)(1+|\zeta||x|+\zeta\cdot x)|(\Lambda G)(x,t)|
\leq C_1[G]_{2,\zeta}
\end{split}
\end{equation*}
hold true for all $(x,t)\in\mathbb R^3\times \mathbb R$ rather than \eqref{est-wake1}--\eqref{est-wake2}.
\label{rem-pointwise}
\end{remark}
\noindent
{\it Proof of Theorem \ref{main3}}.
By using the operators $S$ and $\Lambda$ given by \eqref{op-wh},
the integral equation \eqref{mild-wh} is written as
\begin{equation}
v=Sg+\Lambda(v\otimes v)
\label{eq-wh}
\end{equation}
where the function $g(t)$ is given by \eqref{g} 
and fulfills \eqref{v-g-est}--\eqref{G-est}. 
Let us fix $r\in (3/2,\infty)$.
On account of \eqref{est-wake1}--\eqref{est-wake2} and by means of a contraction mapping principle,
it is easy to construct a solution
$\widetilde v\in X_{1,\zeta}$ with
\[
[\widetilde v]_{1,\zeta}
\leq \frac{1-\sqrt{1-4C_0C_1k_g}}{2C_1}
<2C_0k_g, \qquad
k_g:=\|g\|_{L^\infty(\mathbb R;\, L^r(\mathbb R^3))}
\]
to the equation \eqref{eq-wh} provided $k_g<1/4C_0C_1$.
By virtue of \eqref{v-g-est}
this smallness condition is indeed accomplished if $\|(\eta,\omega)\|_{W^{1,\infty}}$ is still smaller.
Note that $\widetilde v\in L^\infty(\mathbb R;\, L^{3,\infty}_\sigma(\mathbb R^3))$ and that
\begin{equation}
\sup_{t\in\mathbb R}|\widetilde v(x,t)|\leq C\|(\eta,\omega)\|_{W^{1,\infty}}
(1+|x|)^{-1}(1+|\zeta||x|+\zeta\cdot x)^{-1} \;\;
\mbox{for all $x\in\mathbb R^3$}
\label{v-tilde}
\end{equation}
by taking also into account Remark \ref{rem-pointwise}.

As in the proof of uniqueness of solutions in Theorem \ref{main1} by use of \eqref{G-est}
and thanks to the same estimate for $\nabla U(t,s)^*$ as in \eqref{evo4},
the solution to \eqref{mild-wh} 
(even to the corresponding
weak form as in \eqref{weak-int})
is unique within the class
$L^\infty(\mathbb R;\,L^{3,\infty}_\sigma(\mathbb R^3))$ with small norm,
see Remark \ref{sol-bdd} (i).
Hence, as long as $\|(\eta,\omega)\|_{W^{1,\infty}}$ is small enough,
the function $v(t)$, which is given by \eqref{modi0} and fulfills \eqref{v-cl1},
coincides with $\widetilde v(t)$ reconstructed above. 
In view of \eqref{v-tilde}, we are led to
the desired pointwise decay \eqref{pointwise} at infinity.
The proof is complete.
\hfill
$\Box$

\end{document}